\documentclass[11pt]{amsart}
\usepackage[centertags]{amsmath}
\usepackage{amsfonts}
\usepackage{amssymb}
\usepackage{amsthm}
\usepackage{newlfont}
\usepackage{amscd}
\usepackage{mathrsfs}
\usepackage[all,cmtip]{xy}
\usepackage{tikz-cd}
\tikzcdset{arrow style=Latin Modern, row sep/normal=1.35em, column sep/normal=1.8em, cramped}
\usepackage{tikz}
\usepackage[pagebackref=false,colorlinks]{hyperref}

\definecolor{mycolor}{HTML}{F7F8E0}
\definecolor{myorange}{RGB}{245,156,74}
\definecolor{cadetgrey}{rgb}{0.57, 0.64, 0.69}
\definecolor{calpolypomonagreen}{rgb}{0.12, 0.3, 0.17}

\hypersetup{pdffitwindow=true,linkcolor=calpolypomonagreen,citecolor=calpolypomonagreen,urlcolor=calpolypomonagreen}
\usepackage{cite}
\usepackage{fancyhdr}
\usepackage{stmaryrd}

\usepackage[OT2,OT1]{fontenc}
\newcommand\cyr{%
\renewcommand\rmdefault{wncyr}%
\renewcommand\sfdefault{wncyss}%
\renewcommand\encodingdefault{OT2}%
\normalfont
\selectfont}
\DeclareTextFontCommand{\textcyr}{\cyr}

\usepackage[toc, page] {appendix}

\numberwithin{equation}{section}

\newtheorem{thm}{Theorem}[section]

\newtheorem{cor}[thm]{Corollary}
\newtheorem{lem}[thm]{Lemma}
\newtheorem{prop}[thm]{Proposition}

\newtheorem{conj}[thm]{Conjecture}

\theoremstyle{definition}
\newtheorem{defn}[thm]{Definition}

\newtheorem{rem}[thm]{Remark}

\newtheorem{ques}[thm]{Question}

\newcommand{\KS}{\mathbf{KS}}

\newcommand{\ks}{\boldsymbol{\kappa}}

\newcommand{\sha}{\textrm{{\cyr SH}}}

\begin{document}
\title[$\sha$ and $\ks^{\mathrm{Hg}} \neq 0$]{On the arithmetic of Tate--Shafarevich groups via Kolyvagin's conjecture}
\author[C.-H. Kim]{Chan-Ho Kim}
\address{
Department of Mathematics and Institute of Pure and Applied Mathematics,
Jeonbuk National University,
567 Baekje-daero, Deokjin-gu, Jeonju, Jeollabuk-do 54896, Republic of Korea
}
\email{chanho.math@gmail.com}
\thanks{Chan-Ho Kim was partially supported 
by a KIAS Individual Grant (SP054103) via the Center for Mathematical Challenges at Korea Institute for Advanced Study,
by the National Research Foundation of Korea(NRF) grant funded by the Korea government(MSIT, ME) (No. 2018R1C1B6007009, 2019R1A6A1A11051177, RS-2024-00339824, RS-2025-16067678), 
by Global-Learning \& Academic research institution for Master’s$\cdot$Ph.D. Students, and Postdocs (LAMP) Program of the National Research Foundation of Korea (NRF) funded by the Ministry of Education (No. RS-2024-00443714), and
by the International Centre for Theoretical Sciences (ICTS) for the program Automorphic Forms and the Bloch--Kato Conjecture (code: ICTS/afbk2025/05).
}
\date{\today}
\subjclass[2020]{11G05, 11G40, 14G10}
\keywords{Kolyvagin's conjecture, Heegner points, Kolyvagin systems, elliptic curves, supersingular primes}
\begin{abstract}
We investigate how Heegner points detect the finiteness of the $p$-primary part of Tate--Shafarevich groups of semi-stable elliptic curves over the rationals of \emph{arbitrary} rank with any good reduction prime $p \geq 5$ with irreducible mod $p$ representation and partially recover \c{C}iperiani--Wiles' theorem on solvable points on genus one curves. 
As a key ingredient of the proof of these results, we also give a concise proof of Kolyvagin's conjecture for semi-stable elliptic curves with supersingular reduction with a relevant choice of an imaginary quadratic field.
Our approach is independent of the cyclotomic Iwasawa main conjecture for elliptic curves with supersingular reduction.

%
\end{abstract}
\maketitle
\setcounter{tocdepth}{1}


\section{Introduction}
Let $E$ be an elliptic curve over $\mathbb{Q}$ without complex multiplication and $p \geq 5$ be a good reduction prime
for $E$.
We say that \emph{$E[p]$ has large image} if the mod $p$ representation $\overline{\rho} : \mathrm{Gal}(\overline{\mathbb{Q}}/\mathbb{Q}) \to \mathrm{Aut}_{\mathbb{F}_p}(E[p])$ is surjective.
We assume this large image assumption throughout this article.

The aim of this article is to present three results on the following topics:
\begin{enumerate}
\item[Thm.\:$ $\ref{thm:main-finiteness}.] Certain families of Heegner points still detect the finiteness of the $p$-primary part of the Tate--Shafarevich group $\sha(E/\mathbb{Q})[p^\infty]$ when the analytic rank of $E$ is greater than 1 (a partial generalization of the theorem of Gross--Zagier and Kolyvagin).
\item[Thm.\:$ $\ref{thm:main}.] Heegner point Kolyvagin systems are non-trivial for semi-stable elliptic curves with supersingular reduction (Kolyvagin's conjecture for the supersingular setting).
\item[Cor.\:$ $\ref{cor:ciperiani-wiles}.] Under mild assumptions, a smooth projective curve over $\mathbb{Q}$ of genus one admits a point in a solvable extension (a partial result on the theorem of \c{C}iperiani--Wiles with a refinement on the solvable extensions).
\end{enumerate}

\subsection{The finiteness of Tate--Shafarevich groups}
\subsubsection{}
Let $K$ be an imaginary quadratic field such that the (generalized) Heegner hypothesis holds, $p$ splits in $K$, and the quadratic twist of $E$ by $K$ has analytic rank $\leq 1$.
See $\S$\ref{subsubseec:choice-K} for the precise conditions on $K$.

The ground-breaking work of Gross--Zagier and Kolyvagin and the recent advances on its $p$-converse roughly establish  the equivalence of the following statements:
\begin{enumerate}
\item The vanishing order of the $L$-function of $E$ over $K$ at $s=1$ is one. 
\item The Heegner point over $K$ is non-torsion. 
\item The Mordell--Weil rank is one and the Tate--Shafarevich group is finite.
\item The $p^\infty$-Selmer corank of $E$ over $K$ is one (and the $p$-primary part of the Tate--Shafarevich group of $E$ over $K$ is finite) for one $p$.
\end{enumerate}
We investigate how this equivalence statement extends to elliptic curves of arbitrary rank.
Our result says that, even when the analytic rank is greater than one, the finiteness of $\sha(E/\mathbb{Q})[p^\infty]$ is still detected by the arithmetic of Heegner points (but in a more complicated way).
As far as we know, there is almost no theoretical result towards the finiteness of (the $p$-primary part of) Tate--Shafarevich groups of elliptic curves when the analytic rank is $> 1$ \cite{coates-sha}.
\subsubsection{}
Let 
$$\partial = \partial^{(\mathrm{ord}(\ks^{\mathrm{Hg}}))} (\ks^{\mathrm{Hg}}) \geq 0$$
 be a constant determined purely by the collection of Heegner points over ring class fields
 whose precise definition is given in (\ref{eqn:numerical-invariants}).
Let $k \geq 1$ be an integer such that $p^{k+\partial-1} \mathrm{Sel}(K, E[p^\infty])_{/\mathrm{div}} = 0$ where $\mathrm{Sel}(K, E[p^\infty])_{/\mathrm{div}}$ is the quotient of $\mathrm{Sel}(K, E[p^\infty])$ by its maximal divisible subgroup. The minimal choice of such an integer $k$ can be computed from modular symbols \cite{kim-structure-selmer}.
Let $n_k \in \mathcal{N}_{k+\partial}$
where $\mathcal{N}_{k+\partial}$ is the set of square-free products of mod $p^{k+\partial}$ Kolyvagin primes for $(E, p, K)$ reviewed in \S\ref{subsubsec:construction-kolyvagin-classes}.
Let $K[n_k]$ be the ring class field of $K$ of conductor $n_k$. In particular, $K[1]$ is the Hilbert class field of $K$.
Let
$$P_{n_k} \in E(K[n_k])$$ be the derived Heegner point over $K[n_k]$ reviewed in \S\ref{subsubsec:derived-heegner-points}, and denote by 
$$\kappa^{\mathrm{Hg}}_{n_k} \in p^{\partial}\mathrm{Sel}(K, E[p^{k+\partial}]) \subseteq \mathrm{Sel}(K, E[p^{k}])$$
the corresponding cohomology class in the Heegner point Kolyvagin system $\ks^{\mathrm{Hg}}$ which is defined by the image of the mod $p^{k+\partial}$ reduction of $P_{n_k}$.
Kolyvagin's conjecture \cite[Conj. A]{kolyvagin-selmer} claims that $\kappa^{\mathrm{Hg}}_{n_k}$ is non-trivial for some $n_k$ with $k \geq 1$, and it is now known for a large class of triple $(E, p, K)$ (Theorem \ref{thm:main-ordinary}) and we also prove some new cases in this article (Theorem \ref{thm:main}).
Assume that
$$r^+_p := \mathrm{cork}_{\mathbb{Z}_p} \mathrm{Sel}(\mathbb{Q}, E[p^\infty]) \geq 2 .$$
Under his conjecture, Kolyvagin determined $r^+_p$ purely in terms of the collection of Heegner points over ring class fields (Theorem \ref{thm:kolyvagin-vanishing-order}) and proved that there exist $r^+_p$ derived Heegner points
\begin{equation} \label{eqn:generators-derived-heegner-points}
 P_{n_{k, 1}}, \cdots ,  P_{n_{k, r^+_p}} 
\end{equation}
such that their image in $\mathrm{H}^1(K, E[p^{k+\partial}])$ generates $p^{\partial}\mathrm{Sel}(K, E[p^{k+\partial}]) \simeq (\mathbb{Z}/p^k\mathbb{Z})^{\oplus r^+_p}$
where  $n_{k, i} \in \mathcal{N}_{k+\partial}$ and $\nu(n_{k, i}) = r^+_p - 1$ for every $1 \leq i \leq r^+_p$
 (Theorem \ref{thm:kolyvagin-construction-theorem-original}).
Here, $\nu(n)$ means the number of prime divisors of $n$.
The existence of such points is a part of his construction of Selmer groups (Theorem \ref{thm:kolyvagin-construction-theorem-original}) and has been curiously overlooked for decades.
Since the prime divisors of $n_{k,i}$'s are chosen by the Chebotarev density argument, the derived Heegner points in (\ref{eqn:generators-derived-heegner-points}) are not necessarily unique.

Our first main result shows that the finiteness of $\sha(E/\mathbb{Q})[p^\infty]$ is 
detected by the precise locations of the derived Heegner points mentioned above in the Mordell--Weil groups over the ring class fields and vice versa.
\begin{thm} \label{thm:main-finiteness}
Let $E$ be an elliptic curve over $\mathbb{Q}$ without complex multiplication and $p \geq 5$ be a good reduction prime
such that $E[p]$ has large image. If $E$ has supersingular reduction at $p$, we assume that $E$ is semi-stable.
Suppose that 
$$r^+_p = \mathrm{cork}_{\mathbb{Z}_p}\mathrm{Sel}(\mathbb{Q}, E[p^\infty]) \geq 2.$$
Then the following statements are equivalent:
\begin{enumerate}
\item $\sha(E/\mathbb{Q})[p^\infty]$ is finite.
\item For every integer $k \geq 1$ with $p^{k+\partial-1} \mathrm{Sel}(K, E[p^\infty])_{/\mathrm{div}} = 0$ and every $n_k \in \mathcal{N}_{k+\partial}$ with $\nu(n_k) = r^+_p - 1$, the derived Heegner point $P_{n_k} \in E(K[n_k])$ satisfies
 $$P_{n_k} \in p^{ \partial }\left( E(\mathbb{Q}) +p^kE(K[n_k]) \right).$$ 
\item  Fix an integer $k \geq 1$ with $p^{k+\partial-1} \mathrm{Sel}(K, E[p^\infty])_{/\mathrm{div}} = 0$.  the derived Heegner points $P_{n_{k, i}} \in E(K[n_{k, i}])$ in (\ref{eqn:generators-derived-heegner-points}) with $1 \leq i \leq r^+_p$ satisfies
 $$P_{n_{k, i}} \in p^{ \partial }\left( E(\mathbb{Q}) +p^kE(K[n_{k, i}]) \right)$$
\end{enumerate}
\end{thm}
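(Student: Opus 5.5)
Kolyvagin's conjecture is the essential input: Theorem \ref{thm:main-ordinary} in the ordinary case and Theorem \ref{thm:main} in the supersingular semi-stable case. Once $\ks^{\mathrm{Hg}}\neq 0$ is known, Kolyvagin's structure results (Theorems \ref{thm:kolyvagin-vanishing-order} and \ref{thm:kolyvagin-construction-theorem-original}) apply. They give two things for every $k$ with $p^{k+\partial-1}\mathrm{Sel}(K,E[p^\infty])_{/\mathrm{div}}=0$:
\[
p^{\partial}\mathrm{Sel}(K,E[p^{k+\partial}]) \simeq (\mathbb{Z}/p^k\mathbb{Z})^{\oplus r^+_p},
\]
and every class $\kappa^{\mathrm{Hg}}_{n}$ with $\nu(n)=r^+_p-1$ lies in the $\epsilon$-eigenspace matching $\mathrm{Sel}(\mathbb{Q},E[p^\infty])$ under complex conjugation. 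The classes $\kappa^{\mathrm{Hg}}_{n_{k,i}}$ generate this group.

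The plan is to read conditions (2) and (3) as statements about Kummer images. For $n\in\mathcal{N}_{k+\partial}$, the class $\kappa^{\mathrm{Hg}}_n$ is built from $P_n$ via the descent isomorphism $\mathrm{H}^1(K,E[p^{k+\partial}])\simeq \mathrm{H}^1(K[n],E[p^{k+\partial}])^{\mathcal{G}_n}$. This isomorphism is valid because $E(K[n])[p]=0$ under the large image hypothesis. Hence $P_n\in p^{\partial}(E(\mathbb{Q})+p^kE(K[n]))$ is equivalent to $\kappa^{\mathrm{Hg}}_n$ lying in $p^{\partial}$ of the image of $E(\mathbb{Q})/p^{k+\partial}$ inside $\mathrm{H}^1(K,E[p^{k+\partial}])$. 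I would check this carefully at the level of points. One direction uses that the Kummer map over $K[n]$ is injective on $E(K[n])/p^{k+\partial}$. The other direction lifts the relation $\kappa_n=p^{\partial}\delta(Q)$ with $Q\in E(\mathbb{Q})$ to $P_n-p^{\partial}Q\in p^{k+\partial}E(K[n])$, absorbing the $p^{\partial}$ factor appropriately.

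So condition (2) says all such $\kappa_n$ lie in the Mordell--Weil part, and condition (3) says the generators do. Then (2)$\Rightarrow$(3) is trivial. For (3)$\Rightarrow$(1): the generators span $p^{\partial}\mathrm{Sel}(K,E[p^{k+\partial}])\cong(\mathbb{Z}/p^k)^{r^+_p}$, so condition (3) forces this group to lie inside $p^{\partial}\,\mathrm{im}(E(\mathbb{Q})/p^{k+\partial})$. That image has $\mathbb{Z}/p^k$-rank at most $\mathrm{rk}\,E(\mathbb{Q})$, since $E(\mathbb{Q})[p]=0$. Hence $\mathrm{rk}\,E(\mathbb{Q})\ge r^+_p$, so $\mathrm{rk}\,E(\mathbb{Q})=r^+_p$ and the divisible part of $\sha(E/\mathbb{Q})[p^\infty]$ vanishes, giving (1).

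For (1)$\Rightarrow$(2): finiteness gives $E(\mathbb{Q})\otimes\mathbb{Q}_p/\mathbb{Z}_p=\mathrm{Sel}(\mathbb{Q},E[p^\infty])_{\mathrm{div}}$. I would show that $p^{\partial}$ of the $\epsilon$-part of $\mathrm{Sel}(K,E[p^{k+\partial}])$ comes from the divisible part. This is where the hypothesis $p^{k+\partial-1}\mathrm{Sel}_{/\mathrm{div}}=0$ enters: a class killed by $p^k$ after multiplication by $p^{\partial}$ cannot see the non-divisible quotient. Then $\kappa_n$ lies in $p^{\partial}$ of the image of $E(\mathbb{Q})/p^{k+\partial}$, and the translation above finishes.

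I expect the main obstacle to be precisely this exponent bookkeeping. Specifically, one must show that $p^{\partial}\mathrm{Sel}(K,E[p^{k+\partial}])$ coincides with $p^{\partial}$ of the image of the divisible part modulo $p^{k+\partial}$, which is exactly the content of Kolyvagin's structure theorem. One must also handle the passage between $\mathrm{Sel}(K,-)$ and its $\epsilon$-eigenspace over $\mathbb{Q}$, which works since $p$ is odd.
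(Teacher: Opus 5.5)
Your outline follows the paper's argument. You translate the condition on $P_n$ into a Kummer-image condition on $\kappa^{\mathrm{Hg}}_n$ through the diagram (\ref{eqn:kummer-map-Kn}). You get (1)$\Rightarrow$(2) from $\kappa^{\mathrm{Hg}}_{n_k}\in p^{\partial}\mathrm{Sel}(\mathbb{Q},E[p^{k+\partial}])\subseteq\mathrm{Sel}(\mathbb{Q},E[p^\infty])_{\mathrm{div}}$, and (3)$\Rightarrow$(1) from Theorem \ref{thm:kolyvagin-construction-theorem-original}.

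Two of your choices are cleaner than the paper's.
\begin{itemize}
\item Writing the translation as $\kappa^{\mathrm{Hg}}_n\in p^{\partial}\delta\bigl(E(\mathbb{Q})/p^{k+\partial}\bigr)$, via injectivity of $\delta_n$ and the restriction isomorphism, tracks the factor $p^{\partial}$ explicitly. The paper's phrasing via ``trivial image in $\sha$'' leaves this implicit.
\item Your rank count for (3)$\Rightarrow$(1) uses only the single fixed $k$ together with $\mathrm{Hg}_k\simeq(\mathbb{Z}/p^k\mathbb{Z})^{\oplus r^+_p}$. The paper, as written, exhausts $\mathrm{Sel}(\mathbb{Q},E[p^\infty])_{\mathrm{div}}=\bigcup_k\mathrm{Hg}_k$ and so uses the hypothesis for every $k$. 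Your version matches statement (3) as stated.
\end{itemize}

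The justification you give in (1)$\Rightarrow$(2) is not right, however. Put $S=\mathrm{Sel}(\mathbb{Q},E[p^\infty])$ and split $S=S_{\mathrm{div}}\oplus S_{/\mathrm{div}}$. The hypothesis $p^{k+\partial-1}S_{/\mathrm{div}}=0$ only gives $p^{\partial}S[p^{k+\partial}]=S_{\mathrm{div}}[p^k]\oplus p^{\partial}S_{/\mathrm{div}}$. Once $k\ge 2$ the second summand need not vanish, so being killed by $p^k$ does not keep a class out of the non-divisible quotient.

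What you actually need is $p^{\partial}S_{/\mathrm{div}}=0$, which is the annihilation the paper invokes. You correctly point to Kolyvagin's structure theorem as its source. Concretely, Theorem \ref{thm:structure-kolyvagin} identifies the differences $\partial^{(j)}-\partial^{(j+1)}$, for $j\ge\mathrm{ord}(\ks^{\mathrm{Hg}})$, with the exponents $a^{\pm}_i$. These differences are therefore non-negative, and $a^+_1=\partial^{(\mathrm{ord}(\ks^{\mathrm{Hg}})+1)}-\partial^{(\mathrm{ord}(\ks^{\mathrm{Hg}})+2)}\le\partial$.

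Equivalently, use the isomorphism $p^{\partial}\mathrm{Sel}(\mathbb{Q},E[p^{k+\partial}])\simeq(\mathbb{Z}/p^k\mathbb{Z})^{\oplus r^+_p}$ from your first paragraph. Under (1) this group contains $p^{\partial}\delta\bigl(E(\mathbb{Q})/p^{k+\partial}\bigr)\simeq(\mathbb{Z}/p^k\mathbb{Z})^{\oplus r^+_p}$, so the two coincide by counting.

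That isomorphism must be stated over $\mathbb{Q}$ (the $\epsilon$-part), not over $K$ as you wrote it. By Theorem \ref{thm:kolyvagin-vanishing-order}(2), $r^+_p+r^-_p$ is odd. So when $r^+_p$ is even, $r^-_p>0$, and $p^{\partial}\mathrm{Sel}(K,E[p^{k+\partial}])$ then contains $(\mathbb{Z}/p^k\mathbb{Z})^{\oplus(r^+_p+1)}$. Your (3)$\Rightarrow$(1) count only needs $\mathrm{Hg}_k$ itself, so it is unaffected.
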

\begin{proof}
See $\S$\ref{subsec:proof-theorem-main-finiteness}.
\end{proof}
It is remarkable that the finiteness question of Tate--Shafarevich groups can be translated to the statement purely on Heegner points even for elliptic curves of arbitrary rank.
When $\mathrm{cork}_{\mathbb{Z}_p}\mathrm{Sel}(\mathbb{Q}, E[p^\infty]) \leq 1$, the finiteness of $\sha(E/\mathbb{Q})[p^\infty]$ (and even $\sha(E/\mathbb{Q})$) follows from the strong $p$-converse to the theorem of Gross--Zagier and Kolyvagin. See \cite{wei-zhang-mazur-tate, burungale-skinner-tian-wan} for example.
\subsubsection{}
We also develop the \emph{local} analogue of Theorem \ref{thm:main-finiteness} based on the local-global principle on the linear dependence in Mordell--Weil groups in Theorem \ref{thm:main-finiteness-local-criteria} and \S\ref{subsec:linear-dependence}. The local analogue yields that the finiteness of $\sha(E/\mathbb{Q})[p^\infty]$ can be verified by \emph{finitely} many computations in elliptic curves over \emph{finite} fields.
This computation becomes effective provided that the reductions of Mordell--Weil groups over ring class fields at various primes can be computed effectively.

\subsubsection{}
We record a particularly interesting application of Theorem \ref{thm:main-finiteness} and its interpretations.
Under the finiteness of $\sha(E/\mathbb{Q})[p^\infty]$, we can easily characterize
$E(\mathbb{Q})/p^{k} E(\mathbb{Q}) \simeq p^\partial E(\mathbb{Q})/p^{k+\partial} E(\mathbb{Q})$ for every $k \geq 1$.
\begin{cor} \label{cor:approximate-kummer-image}
Under the assumptions of Theorem \ref{thm:main-finiteness}, if $\sha(E/\mathbb{Q})[p^\infty]$ is finite, then 
the image of the derived Heegner points in (\ref{eqn:generators-derived-heegner-points}) modulo $p^{k+\partial}$ coincides with the image of $p^\partial E(\mathbb{Q})/ p^{k+\partial} E(\mathbb{Q})$ in $p^\partial\mathrm{Sel}(\mathbb{Q}, E[p^{k+\partial}])$ for every $k \geq 1$.
\end{cor}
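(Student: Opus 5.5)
We have to prove Corollary \ref{cor:approximate-kummer-image}. Throughout we take Theorem \ref{thm:main-finiteness} and Kolyvagin's structure theorem (Theorem \ref{thm:kolyvagin-construction-theorem-original}) as given.

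\textbf{Step 1: the Heegner classes lie in the Kummer image of $p^\partial E(\mathbb{Q})$.} Since $\sha(E/\mathbb{Q})[p^\infty]$ is finite, the implication (1)$\Rightarrow$(3) of Theorem \ref{thm:main-finiteness} gives, for each $i$,
$$P_{n_{k,i}} = p^\partial Q_i + p^{k+\partial} R_i, \qquad Q_i \in E(\mathbb{Q}), \ R_i \in E(K[n_{k,i}]).$$
The Kolyvagin class $\kappa^{\mathrm{Hg}}_{n_{k,i}}$ is the image of the mod $p^{k+\partial}$ reduction of $P_{n_{k,i}}$. The summand $p^{k+\partial}R_i$ reduces to zero. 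The term $p^\partial Q_i$ is rational, so its Kummer image is defined already over $\mathbb{Q}$ and then restricted to $K$. Hence $\kappa^{\mathrm{Hg}}_{n_{k,i}}$ equals the restriction to $K$ of the Kummer class of $p^\partial Q_i \in p^\partial E(\mathbb{Q})/p^{k+\partial}E(\mathbb{Q})$.

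The delicate point is the descent from $K[n_{k,i}]$ to $K$. The class is built from $P_{n_{k,i}}$ through the isomorphism $\mathrm{H}^1(K, E[p^{k+\partial}]) \cong \mathrm{H}^1(K[n], E[p^{k+\partial}])^{\mathcal{G}_n}$, which holds because $E(K[n])[p]=0$ under large image. We must check that $p^{k+\partial}R_i$ contributes nothing under this identification. It does not: the Kummer map over $K[n]$ kills $p^{k+\partial}E(K[n])$, and injectivity of the restriction map then pins down the class over $K$.

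\textbf{Step 2: comparing $\mathbb{Q}$ with $K$.} The restriction $\mathrm{H}^1(\mathbb{Q}, E[p^{k+\partial}]) \to \mathrm{H}^1(K, E[p^{k+\partial}])$ is injective, again because $E(K)[p]=0$. Since $p$ is odd, its image is the $+$-eigenspace for complex conjugation. So we may regard $p^\partial\mathrm{Sel}(\mathbb{Q}, E[p^{k+\partial}])$ as a subgroup of $p^\partial\mathrm{Sel}(K, E[p^{k+\partial}])$. By Step 1, the span $H$ of the Heegner classes lies in the image $I$ of $p^\partial E(\mathbb{Q})/p^{k+\partial}E(\mathbb{Q})$. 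In particular the Heegner classes land in the $+$-eigenspace.

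\textbf{Step 3: counting.} We use two isomorphisms.
\begin{itemize}
\item By Theorem \ref{thm:kolyvagin-construction-theorem-original}, $H = p^\partial\mathrm{Sel}(K, E[p^{k+\partial}]) \simeq (\mathbb{Z}/p^k\mathbb{Z})^{\oplus r^+_p}$.
\item Since $\sha(E/\mathbb{Q})[p^\infty]$ is finite, $\mathrm{rank}\,E(\mathbb{Q}) = r^+_p$. With $E(\mathbb{Q})[p]=0$ this gives $p^\partial E(\mathbb{Q})/p^{k+\partial}E(\mathbb{Q}) \simeq (\mathbb{Z}/p^k)^{r^+_p}$, and the Kummer map on it is injective.
\end{itemize}
Hence $H \subseteq I$ are finite groups of the same order, so $H = I$. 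This is the asserted coincidence.

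Since $H$ is already the whole of $p^\partial\mathrm{Sel}(K, E[p^{k+\partial}])$, we also get $I = p^\partial\mathrm{Sel}(\mathbb{Q}, E[p^{k+\partial}]) = p^\partial\mathrm{Sel}(K, E[p^{k+\partial}])$. Consequently the minus part vanishes, consistent with the choice of $K$.

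\textbf{Other $k$ and the main obstacle.} For $k$ not satisfying the hypothesis $p^{k+\partial-1}\mathrm{Sel}_{/\mathrm{div}}=0$, enlarge $k$ to some $k' \ge k$ that does satisfy it, and reduce modulo $p^{k+\partial}$. I expect this compatibility of Kolyvagin's generators under change of $k$ to be the main obstacle. The Kummer-theoretic descent in Step 1 is routine by comparison. My fallback is to interpret the corollary only for the $k$ covered by Theorem \ref{thm:main-finiteness}.
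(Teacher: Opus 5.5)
Your main line is the paper's. The paper's proof is the single sentence that the corollary follows from Theorem~\ref{thm:main-finiteness}(3), and your Step~1 plus the order count ($H\subseteq I$, $|H|=|I|=p^{kr^+_p}$) is exactly what that sentence leaves implicit. Two points need fixing.

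\textbf{The claim about $p^\partial\mathrm{Sel}(K,\cdot)$ is wrong.} The first bullet of Step~3, ``$H=p^\partial\mathrm{Sel}(K,E[p^{k+\partial}])$'', and your closing claim that the minus part vanishes are false in general. Theorem~\ref{thm:kolyvagin-construction-theorem-original} only gives $H\subseteq p^\partial\mathrm{Sel}(\mathbb{Q},E[p^{k+\partial}])$ and $H\simeq(\mathbb{Z}/p^k\mathbb{Z})^{\oplus r^+_p}$. By Theorem~\ref{thm:kolyvagin-vanishing-order}, $r^+_p-1-r^-_p$ is even. So whenever $r^+_p$ is even (already for $r^+_p=2$) you have $r^-_p=1$, and then $p^\partial\mathrm{Sel}(K,E[p^{k+\partial}])^-$ contains a copy of $\mathbb{Z}/p^k\mathbb{Z}$. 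The choice of $K$ only forces the analytic rank of $E^K$ to be at most $1$, not $0$. Your count uses only $|H|=p^{kr^+_p}$, so simply delete these claims.

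\textbf{The restriction on $k$ is not a real obstacle.} The enlarge-$k$ detour is unnecessary. The hypothesis $p^{k+\partial-1}\mathrm{Sel}(K,E[p^\infty])_{/\mathrm{div}}=0$ is never actually used. What is used, also in the paper's proof of (1)$\Rightarrow$(2), is that $p^\partial$ kills $\mathrm{Sel}(\mathbb{Q},E[p^\infty])_{/\mathrm{div}}\simeq\sha(E/\mathbb{Q})[p^\infty]$.

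This holds by Theorem~\ref{thm:structure-kolyvagin}. The $a^+_i$ and the $a^-_{i+|r^+_p-r^-_p|-1}$ are exactly the consecutive differences $\partial^{(j)}(\ks^{\mathrm{Hg}})-\partial^{(j+1)}(\ks^{\mathrm{Hg}})$ for $j\ge\mathrm{ord}(\ks^{\mathrm{Hg}})$. Hence that sequence is non-increasing, and $a^+_i\le\partial$.

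Now write $\mathrm{Sel}(\mathbb{Q},E[p^\infty])=D\oplus F$, with $D$ the Kummer image of $E(\mathbb{Q})\otimes\mathbb{Q}_p/\mathbb{Z}_p$ and $p^\partial F=0$. Using $E(\mathbb{Q})[p]=0$, you get for every $k\ge1$
\[
p^\partial\mathrm{Sel}(\mathbb{Q},E[p^{k+\partial}])=p^\partial D[p^{k+\partial}]=D[p^k]=I .
\]
So $H\subseteq I$ follows directly from Theorem~\ref{thm:kolyvagin-construction-theorem-original} for every $k$, and your count then gives $H=I$. The Kummer-theoretic descent of Step~1 is not even needed.
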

\begin{proof}
It follows from Theorem \ref{thm:main-finiteness}(3).
The phrase ``the image of the derived Heegner points in (\ref{eqn:generators-derived-heegner-points}) modulo $p^{k+\partial}$" can be replaced by
``the image of the derived Heegner points $P_{n_k}$ in Theorem \ref{thm:main-finiteness}(2) modulo $p^{k+\partial}$" if we use Theorem \ref{thm:main-finiteness}(2) instead.
\end{proof}
\begin{rem}
Corollary \ref{cor:approximate-kummer-image} has two different interpretations: a $p$-adic approximation of Mordell--Weil groups and the refined Mazur--Tate type conjecture for Heegner points.
\begin{enumerate}
\item Let $n''_k$ be the square-free product of all the prime divisors of $n_{k, i}$'s in (\ref{eqn:generators-derived-heegner-points}) for $1 \leq i \leq r^+_p$
and $L$ be the compositum of all the $K[n''_k]$'s for $k \geq 1$.
Then Corollary \ref{cor:approximate-kummer-image} is equivalent to the equality of subsets  of $E(K[n''_{k}])$
 $$p^{\partial}E(\mathbb{Q}) + p^{k+\partial}E(K[n''_{k}]) = \left\langle P_{n_{k, i}} : 1 \leq i \leq r^+_p \right\rangle  + p^{k+\partial}E(K[n''_{k}])  $$ for every $k \geq 1$.
Furthermore, it implies the equality of subsets of $E(L)$
 $$p^{\partial}E(\mathbb{Q}) + E(L)_{\mathrm{tors}} = \bigcap_{k \geq 1} \left( \left\langle P_{n_{k, i}} : 1 \leq i \leq r^+_p \right\rangle + p^{k+\partial}E(L) \right)   .$$
Hence,  the collection of such $P_{n_{k, i}}$'s (or $P_{n_k}$'s) yields a $p$-adic approximation of the Mordell--Weil group $E(\mathbb{Q})$ (modulo torsions) under the finiteness of $\sha(E/\mathbb{Q})[p^\infty]$ by Corollary \ref{cor:approximate-kummer-image}. It is well-known that $E(L)_{\mathrm{tors}}$ is finite (e.g. \cite{nekovar-schappacher, jetchev-lauter-stein}) and see \cite[Prop. 7]{moon-mordell-weil} for the structure of $E(L)$.
\item 
Corollary \ref{cor:approximate-kummer-image} also provides a more explicit and refined version of some results of Darmon and Bertolini--Darmon \cite[Conj. 2.3(2)]{darmon-refined-bsd} and \cite[the proof of Prop. 4.18]{bertolini-darmon-derived-heights-1994}  on the refined Mazur--Tate type conjecture for Heegner points.
Corollary \ref{cor:approximate-kummer-image} also generalizes \cite[Props. 5.9 and 5.10]{darmon-refined-bsd} in the following sense. Under various technical assumptions, Darmon proved that if a Darmon--Kolyvagin cohomology class is non-trivial mod $p$ at an appropriate spot, then $\sha(E/K)[p] = 0$ and the mod $p$ cohomology class actually lies in the image of the Mordell--Weil group $E(K)$ in $\mathrm{Sel}(K, E[p])$.
Here, this mod $p$ non-vanishing at an ``appropriate spot" is actually equivalent to $\mathrm{dim}_{\mathbb{F}_p}\mathrm{Sel}(K, E[p]) = \mathrm{dim}_{\mathbb{F}_p} E(K)/pE(K)$ from the viewpoint of Kolyvagin's triangulation of mod $p$ Selmer groups \cite{kolyvagin-structure-sha, kolyvagin-selmer, wei-zhang-mazur-tate}, so the consequence $\sha(E/K)[p] = 0$ is natural.
In this sense, Corollary \ref{cor:approximate-kummer-image} generalizes Darmon's mod $p$ result to the $p^\infty$-version.
\end{enumerate}
\end{rem}


\subsection{Kolyvagin's conjecture at good (supersingular) primes}
Combining with the work of Burungale--Castella--Skinner, Burungale--Castella--Grossi--Skinner, and the author \cite{burungale-castella-skinner-gl2,burungale-castella-grossi-skinner-indivisibility, kim-gross-zagier},
it is now known that Kolyvagin's conjecture is true for non-CM elliptic curves with good ordinary reduction at $p \geq 5$ such that $E[p]$ has large image as follows.
\begin{thm}[Burungale--Castella--Grossi--Skinner, Kim] \label{thm:main-ordinary}
Let $E$ be a non-CM elliptic curve over $\mathbb{Q}$ and $p \geq 5$ be a good ordinary reduction prime for $E$ such that $E[p]$ has large image.
Let $K$ be an imaginary quadratic field $K$ satisfying the classical Heegner hypothesis, $p$ splits in $K$, and  the quadratic twist of $E$ by $K$ has analytic rank $\leq 1$.
Then Kolyvagin's conjecture is true, i.e. the Heegner point Kolyvagin system $\ks^{\mathrm{Hg}}$ associated to $(E, p, K)$ is non-trivial.
\end{thm}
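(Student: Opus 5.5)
The plan is to reduce Kolyvagin's conjecture to an indivisibility statement for an anticyclotomic Iwasawa-theoretic object. The key input is the structure theory of Kolyvagin systems over $\Lambda = \mathbb{Z}_p[\![\Gamma^{\mathrm{ac}}]\!]$.

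\emph{Step 1: the $\Lambda$-adic system.} Since $p$ splits in $K$ and $E$ is ordinary at $p$, Heegner points over the ring class fields $K[np^m]$ are norm-compatible in the anticyclotomic tower after applying the $U_p$-stabilization. This yields a $\Lambda$-adic Heegner point Kolyvagin system $\ks^{\mathrm{Hg}}_\infty$ whose bottom layer specializes, up to a $p$-adic unit Euler factor, to $\ks^{\mathrm{Hg}}$.

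\emph{Step 2: the main-conjecture divisibility.} By Howard's theory of $\Lambda$-adic Kolyvagin systems, if $\ks^{\mathrm{Hg}}_\infty$ is non-zero then
\[
\mathrm{char}_\Lambda\bigl(X_{\mathrm{tors}}\bigr) \supseteq \mathrm{char}_\Lambda\bigl(\Lambda\text{-}\mathrm{Sel}/\Lambda\kappa_1\bigr)^2 ,
\]
with a correction term $\partial^{(\infty)}$ measuring the divisibility of $\ks^{\mathrm{Hg}}_\infty$. Moreover, Kolyvagin's conjecture at the bottom layer is equivalent to $\ks^{\mathrm{Hg}}_\infty \not\equiv 0 \bmod \mathfrak{m}_\Lambda$, that is, to the $\Lambda$-primitivity of the system. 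Howard's control theorem makes this reduction mod $(\gamma-1)$ precise; it uses the large image hypothesis and $p\ge5$.

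\emph{Step 3: proving primitivity.} I would show that $\ks^{\mathrm{Hg}}_\infty$ is $\Lambda$-primitive by establishing the reverse divisibility in the Heegner point main conjecture. This amounts to equality of $\mathrm{char}_\Lambda(X_{\mathrm{tors}})$ with $\mathrm{char}_\Lambda(\mathrm{Sel}/\Lambda\kappa_1)^2$, with no extra factor. Via the equivalence between the Heegner point main conjecture and the BDP/Greenberg main conjecture (a Perrin-Riou style explicit reciprocity law for Heegner classes, due to Castella), this reduces to the Greenberg-type main conjecture for the BDP $p$-adic $L$-function. Wan proves the needed divisibility via Eisenstein congruences on $\mathrm{U}(3,1)$, and Burungale--Castella--Skinner remove extraneous hypotheses. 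Combining both directions shows that $\partial^{(\infty)} = 0$ in the sense of the $\mu$- and $\lambda$-invariants. In particular the system is not divisible by any height-one prime containing $p$, and it does not vanish mod $\mathfrak{m}_\Lambda$.

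Equivalently, I would use the route of Burungale--Castella--Grossi--Skinner: prove indivisibility of the Kolyvagin system mod $\mathfrak{m}_\Lambda$ using the vanishing of the $\mu$-invariant of the BDP $p$-adic $L$-function, which holds by Hsieh's and Burungale's results. I would then track the specialization from $\Lambda$ down to $\mathbb{Z}_p$ using the hypothesis that the twist $E^K$ has analytic rank $\le 1$. That hypothesis, together with Gross--Zagier--Kolyvagin for $E^K$ and the semi-simplicity afforded by large image, controls the $\epsilon=-$ part of the Selmer group. The $(\gamma-1)$-adic specialization then picks up only the expected factor, so the bottom-layer $\ks^{\mathrm{Hg}}$ is non-zero. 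The paper credits Kim \cite{kim-gross-zagier} for this last piece.

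\emph{Main obstacle.} I expect Step 3 to be the hardest: the passage from the anticyclotomic main conjecture, which only controls behaviour up to characteristic ideals, to non-vanishing of the bottom class $\kappa_n$ mod $p$. What I would try first is to show that the $\Lambda$-adic system is primitive using $\mu=0$ for BDP together with the structure theorem. I would then argue by Nakayama that primitivity descends to $\Lambda/(\gamma-1)$, since the Kolyvagin-system module over $\Lambda$ is free of rank one.
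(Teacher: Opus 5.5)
\textbf{There is a genuine gap.} Your Step 2 reduces the theorem to the wrong statement, and Step 3 then tries to prove something strictly stronger than the theorem, which fails in the theorem's generality.

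Kolyvagin's conjecture only asks that the specialization of $\ks^{\mathrm{Hg}}_\infty$ at the height-one prime $X\Lambda$ be non-zero, i.e.\ that some $\kappa^{\mathrm{Hg}}_n\neq 0$. It places no bound on the $p$-divisibility of these classes. Non-vanishing modulo $\mathfrak{m}_\Lambda$ is a different condition. Since $\Lambda\to\Lambda/\mathfrak{m}_\Lambda$ factors through $\Lambda/X\Lambda=\mathbb{Z}_p$, $\Lambda$-primitivity forces the bottom system $\ks^{\mathrm{Hg}}$ to be non-zero modulo $p$, hence $\partial^{(\infty)}(\ks^{\mathrm{Hg}})=0$. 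That is the refined (mod $p$) form of the conjecture, not the conjecture itself, and it is not available under the stated hypotheses, which say nothing about Tamagawa numbers:
\begin{itemize}
\item In analytic rank one, Gross--Zagier and the $p$-part of BSD give $\mathrm{ord}_p[E(K):\mathbb{Z}y_K]=\sum_{\ell\mid N}\mathrm{ord}_p(c_\ell)+\tfrac12\mathrm{ord}_p\#\sha(E/K)[p^\infty]$.
\item Kolyvagin's structure theorem then yields $\partial^{(\infty)}(\ks^{\mathrm{Hg}})=\sum_\ell\mathrm{ord}_p(c_\ell)$.
\item This is positive as soon as $p\mid c_\ell$ for a split multiplicative $\ell$. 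That situation is compatible with large image; it only means $E[p]$ is unramified at $\ell$.
\end{itemize}
So no combination of $\mu(\mathscr{L}^{\mathrm{BDP}})=0$, an equality of characteristic ideals and Nakayama can give $\Lambda$-primitivity here. Your claim that the main conjecture forces ``$\partial^{(\infty)}=0$'' is unjustified. Your fallback sentence, that the $(\gamma-1)$-adic specialization ``picks up only the expected factor'', is exactly the missing argument, asserted rather than proved. Howard's control theorem controls Selmer groups under specialization, not the non-vanishing of a specialized Kolyvagin system.

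\textbf{What the argument actually needs} is a statement at the single height-one prime $\mathfrak{P}=X\Lambda$: $\ks^{\mathrm{Hg},(\mathfrak{P})}\neq 0$ if and only if the two sides of the Heegner point main conjecture have the same $\mathrm{ord}_{\mathfrak{P}}$. This is Kim's ordinary-case equivalence, whose supersingular analogue is Theorem~\ref{thm:non-triviality-heegner}. It rests on Mazur--Rubin's approximation trick:
\begin{itemize}
\item Write $\mathfrak{P}=(g)$ and set $\mathfrak{P}_M=(g+p^M)$. For large $M$ these satisfy $\Lambda/\mathfrak{P}_M\cong\Lambda/\mathfrak{P}$ and are prime to $f_\Lambda$, so $\kappa_1^{(\mathfrak{P}_M)}\neq0$.
\item Howard's structure theorem and Zanarella's rigidity give $\mathrm{length}\,M_{\mathfrak{P}_M}=\partial^{(0)}(\ks^{(\mathfrak{P}_M)})-d(\ks^{(\mathfrak{P}_M)})$.
\item Comparing the linear growth in $M$ of both sides, with control theorems up to $O(1)$, shows that equality of $\mathrm{ord}_{\mathfrak{P}}$ is equivalent to $d(\ks^{(\mathfrak{P}_M)})$ staying bounded.
\item The congruence $\ks^{(\mathfrak{P})}\equiv\ks^{(\mathfrak{P}_M)}\pmod{p^M}$ turns boundedness into $d(\ks^{(\mathfrak{P})})<\infty$, i.e.\ non-triviality.
\end{itemize}
Because $X\Lambda\neq p\Lambda$, the main conjecture is only needed in $\Lambda[1/p]$. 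No $\mu$-invariant input and no Tamagawa hypothesis enters. That is why this route reaches the theorem in the stated generality and yours does not.
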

We give a succinct proof of Kolyvagin's conjecture for semi-stable elliptic curves with good supersingular reduction at $p \geq 5$.
 It is the key ingredient of Theorem \ref{thm:main-finiteness} for the supersingular reduction primes.
\begin{thm} \label{thm:main}
Let $E$ be a semi-stable elliptic curve over $\mathbb{Q}$ and $p \geq 5$ be a good supersingular reduction prime for $E$.
Let $K$ be an imaginary quadratic field satisfying the conditions given in $\S$\ref{subsubseec:choice-K} below.
Then Kolyvagin's conjecture is true, i.e. $\ks^{\mathrm{Hg}}$ is non-trivial.
\end{thm}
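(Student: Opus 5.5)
We prove Kolyvagin's conjecture for the supersingular triple $(E,p,K)$ by reducing it to a rank-one situation, where non-triviality of $\ks^{\mathrm{Hg}}$ follows from non-vanishing of a single Heegner point. The reduction uses the rigidity of Kolyvagin systems. The standard structure theorem for Kolyvagin systems (Mazur--Rubin, Howard) shows that the module of Kolyvagin systems for $T=T_pE$ over $K$, with the relevant Selmer structure, is free of rank one over $\mathbb{Z}_p$, since $E[p]$ has large image and $p \geq 5$. Moreover, a generator $\ks^{\mathrm{gen}}$ is \emph{primitive}, i.e.\ non-trivial mod $p$. Write $\ks^{\mathrm{Hg}} = u\cdot \ks^{\mathrm{gen}}$ with $u \in \mathbb{Z}_p$. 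Then Kolyvagin's conjecture is equivalent to $u \neq 0$.

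The first step is to locate a non-zero entry. If $u\neq 0$, then some entry $\kappa^{\mathrm{Hg}}_n$ is non-trivial. Conversely, it suffices to exhibit one $n$ with $\kappa^{\mathrm{Hg}}_n \neq 0$ in $\mathrm{Sel}(K,E[p^k])$ for some $k$.

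The natural candidate is $n=1$. By the choice of $K$ in \S\ref{subsubseec:choice-K}, the quadratic twist $E^K$ has analytic rank $\leq 1$. If we can arrange, by choosing $K$ appropriately, that $E$ itself has analytic rank one over $K$ in the relevant case, then Gross--Zagier gives that the basic Heegner point $y_K$ is non-torsion. Hence $\kappa^{\mathrm{Hg}}_1 \neq 0$ in $\varprojlim_k \mathrm{Sel}(K,E[p^k])$, and we are done.

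This fails when $r^+_p \geq 2$, which is exactly the case of interest. In that case we instead use a deformation argument. We choose a Kolyvagin prime $\ell$, or more generally a product $n$ with $\nu(n)=r-1$, at which the relevant Selmer structure on $E$ is modified to a level-raised form $f_n$. We then compare Kolyvagin classes for $E$ with the Heegner class of $f_n$, following the Bertolini--Darmon / Wei Zhang strategy, where $\kappa_n$ mod $p$ is identified with the Heegner class of a congruent form. Wei Zhang's approach proves the mod $p$ non-vanishing of some $\kappa_n$ by induction on the Selmer rank. The base case is rank one, handled by the $p$-converse and a $p$-part BSD formula for the level-raised curve. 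The induction step uses level raising at two admissible primes and Jochnowitz congruences, which reduce to a rank-zero or rank-one statement for a form of lower Selmer rank.

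Semi-stability is used to satisfy the hypotheses of Ribet's level raising and the multiplicity-one statements in Zhang's setting. It is also used to ensure that the relevant Tamagawa factors are prime to $p$.

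The main obstacle is the base case in the supersingular setting. Zhang's argument needs the $p$-part of the BSD formula in analytic rank $0$ and $1$ for $E$ (and for its level-raised forms) at a supersingular prime. This is usually supplied by the cyclotomic main conjecture, which the abstract says we avoid. My first attempt would be to get the rank-zero $p$-part of BSD from the anticyclotomic main conjecture for $p$ split in $K$, via BDP/Heegner point main conjectures of Castella--Wan type, which hold in the supersingular case with $p$ split. I would then descend along the anticyclotomic tower using a Kolyvagin-system control theorem. The rank-one case would follow from the rank-zero case via the Jochnowitz congruence. Checking that the required hypotheses, such as the multiplicity-one input from semi-stability and $p\geq 5$, are met is where I expect the real work to lie.
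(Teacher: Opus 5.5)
\textbf{There is a genuine gap.} Apart from the rank-one case, which is correct but does not bear on the case $r^+_p \geq 2$, your argument is the Bertolini--Darmon/W.~Zhang level-raising induction. Two of its load-bearing steps are not available under the hypotheses of the theorem.

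First, semi-stability does not do what you ask of it. For $q \,\|\, N$, the representation $\overline{\rho}$ is unramified at $q$ exactly when $p \mid \mathrm{ord}_q(\Delta_E)$. In that case, at least when the reduction at $q$ is split multiplicative, $p$ divides the Tamagawa number $c_q = \mathrm{ord}_q(\Delta_E)$. Zhang's argument and its supersingular adaptations (Sweeting, Burungale--B\"uy\"ukboduk--Lei, Bertolini--Longo--Venerucci) need tame-ramification hypotheses on $\overline{\rho}$ at primes dividing $N$, for example ramification at the primes of $N^-$, or at $q \,\|\, N$ with $q \equiv \pm 1 \bmod p$. These are needed for the multiplicity-one/Ihara inputs on Shimura curves and to keep the Tamagawa factors of the level-raised forms prime to $p$. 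The paper stresses that it needs no such hypothesis, and it carries the exponent $0 \le d \le \mathrm{ord}_p(\mathrm{Tam}(E/K))$ precisely because $p$-Tamagawa defects occur.

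Second, the base case of the induction needs the \emph{exact} $p$-part of the BSD formula in analytic rank $0$ for level-raised forms on the definite side. The Castella--Wan theorem cannot supply this. It is an indefinite (Heegner) statement about $E$ itself. Under the present hypotheses it is only an equality in $\Lambda[1/p]$; integrality needs $E[p]$ ramified at every prime of $N^-$. So any descent to $X=0$ loses an unknown power of $p$.

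Your opening reformulation, via a free rank-one module of Kolyvagin systems, is the Mazur--Rubin core-rank-one statement. It does not directly apply to Howard's self-dual systems, and in any case it only restates the conjecture as $u \neq 0$.

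\textbf{The missing idea.} Non-triviality of $\ks^{\mathrm{Hg}}$ is governed by a single height-one prime, so only the $X\Lambda$-part of the main conjecture is needed. That is exactly what an equality in $\Lambda[1/p]$ gives. The paper proves (Theorem~\ref{thm:non-triviality-heegner}) that for every height-one $\mathfrak{P}$, $\ks^{\mathrm{Hg},\pm,\infty}$ is non-trivial modulo $\mathfrak{P}$ if and only if $\mathrm{ord}_{\mathfrak{P}}\,\mathrm{char}_{\Lambda}\bigl(\widehat{\mathrm{Sel}}_{\pm}(K_\infty,T)/\kappa^{\mathrm{Hg},\pm,\infty}_1\bigr) = \mathrm{ord}_{\mathfrak{P}}\,\mathrm{char}_{\Lambda}(M^{\pm}_\infty)$. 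The proof runs as follows:
\begin{itemize}
\item It perturbs $\mathfrak{P}=(g)$ to $\mathfrak{P}_M=(g+p^M)$, where the bottom class is non-zero.
\item Howard's structure theorem and Zanarella's rigidity give $\mathrm{length}\,\mathrm{Sel}^{\pm}(K,(T_{\mathfrak{P}_M})^*)_{/\mathrm{div}} = 2\bigl(\partial^{(0)} - d(\ks^{\mathrm{Hg},\pm,(\mathfrak{P}_M)})\bigr)$.
\item Both sides are compared with $M$ times the relevant $\mathrm{ord}_{\mathfrak{P}}$, up to $O(1)$.
\item The congruence $\ks^{\mathrm{Hg},\pm,(\mathfrak{P})} \equiv \ks^{\mathrm{Hg},\pm,(\mathfrak{P}_M)} \bmod p^M$ shows that equality of orders is equivalent to boundedness of $d(\ks^{\mathrm{Hg},\pm,(\mathfrak{P}_M)})$ in $M$, i.e.\ to non-triviality at $\mathfrak{P}$.
\end{itemize}
Taking $\mathfrak{P} = X\Lambda$, Theorem~\ref{thm:heegner-pt-main-conj-signed} supplies the equality, and Proposition~\ref{prop:specializations-Selmer} identifies $\ks^{\mathrm{Hg},+,(X\Lambda)}$ with $\ks^{\mathrm{Hg}}$. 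No level raising, no $p$-part of BSD, and no tame-ramification hypothesis enter.
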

\begin{proof}
See $\S$\ref{sec:kolyvagin-conjecture}.
\end{proof}
Why do we care of Kolyvagin's conjecture? 
Under the non-triviality of $\ks^{\mathrm{Hg}}$, Kolyvagin presented the following striking applications without assuming any low analytic or Selmer rank condition \cite{kolyvagin-selmer}:
\begin{itemize}
\item[(str)]  The module structure of $\mathrm{Sel}(\mathbb{Q}, E[p^\infty])$ is determined in terms of $\ks^{\mathrm{Hg}}$ (Theorems \ref{thm:kolyvagin-vanishing-order} and \ref{thm:structure-kolyvagin}).
\item[(constr)]  $\mathrm{Sel}(\mathbb{Q}, E[p^\infty])$ is contained in the submodule generated by $\ks^{\mathrm{Hg}}$ in $\mathrm{H}^1(K, E[p^\infty])$ (Theorem \ref{thm:construction-rough}).
\end{itemize}
In particular, (constr) is one of the curious and unique features of Heegner points since there is no counterpart of (constr) for Kato's Euler systems. See \cite{mazur-rubin-book,kurihara-iwasawa-2012, kim-structure-selmer} for the analogue of (str) for the case of Kato's Euler systems.
Also, the strong rank one $p$-converse to the theorem of Gross--Zagier and Kolyvagin \cite[Thm. A]{castella-wan-perrin-riou-ss} follows easily from (str).

Combining Theorems \ref{thm:main-ordinary} and \ref{thm:main}, we have the following statement.
\begin{cor} \label{cor:main-all-but}
Let $E$ be a semi-stable elliptic curve over $\mathbb{Q}$.
Then the applications (str) and (constr) follow for every good reduction prime $p \geq 5$ with irreducible mod $p$ representation.
\end{cor}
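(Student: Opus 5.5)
The plan is to split according to the reduction type of $E$ at $p$; in each case the point is to place ourselves in the hypotheses of Theorem \ref{thm:main-ordinary} or Theorem \ref{thm:main} for a suitable $K$, and then apply Kolyvagin's machinery.

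First, fix a good reduction prime $p \geq 5$ with $E[p]$ irreducible. Since $E$ is semi-stable, it has no complex multiplication. By Serre's theorem on semi-stable curves (Mazur/Serre: for semi-stable $E/\mathbb{Q}$ and $p \geq 5$ with $E[p]$ irreducible, $\overline{\rho}$ is surjective), $E[p]$ has large image. So the large image hypothesis needed in both theorems is automatic here.

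Second, choose $K$. We need an imaginary quadratic field satisfying the conditions of \S\ref{subsubseec:choice-K}: the Heegner hypothesis relative to the conductor $N$ of $E$ (classical, since $N$ is square-free), $p$ split in $K$, and the quadratic twist $E^K$ of analytic rank $\leq 1$. The existence of such $K$ follows from nonvanishing theorems for quadratic twists with prescribed local conditions, namely Bump--Friedberg--Hoffstein, Murty--Murty and Waldspurger. The sign of the functional equation of $E^K$ is forced by the local conditions, and these results then produce a twist with $L$-function of order $\leq 1$ at $s=1$ among those local conditions. I expect this to be the only step needing care, because one must make sure the root number constraint is compatible with the splitting conditions imposed at $N$ and $p$. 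I would first try fixing the local behaviour at $Np\infty$ and invoking the cited nonvanishing result in the residue class with the appropriate sign.

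Third, apply the main theorems. If $p$ is ordinary, Theorem \ref{thm:main-ordinary} gives $\ks^{\mathrm{Hg}} \neq 0$. If $p$ is supersingular, Theorem \ref{thm:main} gives the same. With $\ks^{\mathrm{Hg}}$ non-trivial for $(E,p,K)$, Kolyvagin's results quoted as (str) and (constr) (Theorems \ref{thm:kolyvagin-vanishing-order}, \ref{thm:structure-kolyvagin} and \ref{thm:construction-rough}) apply verbatim, since their only extra input is the non-triviality of $\ks^{\mathrm{Hg}}$ together with the standing hypotheses on $(E,p,K)$. This yields the corollary for every such $p$.
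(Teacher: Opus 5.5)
Your overall architecture matches the paper's: large image follows from irreducibility for semi-stable curves, then Theorem \ref{thm:main-ordinary} handles ordinary $p$ and Theorem \ref{thm:main} handles supersingular $p$, and Kolyvagin's theorems give (str) and (constr). The gap is in your choice of $K$.

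You describe the conditions of \S\ref{subsubseec:choice-K} as the classical Heegner hypothesis, but they are not. That section requires:
\begin{itemize}
\item $N = N^+N^-$, with the primes of $N^+$ split or ramified in $K$ and the primes of $N^-$ inert;
\item $\nu(N^-)$ even;
\item $2$ and $p$ split in $K$, and the discriminant $-D_K<-4$ is odd;
\item when $N^-=1$, at least one prime divisor of $N$ is ramified in $K$.
\end{itemize}
A field satisfying the classical Heegner hypothesis has $N^-=1$ and no ramified prime dividing $N$, so the last condition explicitly excludes it.

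This matters because Theorem \ref{thm:main} is deduced from Theorem \ref{thm:non-triviality-heegner} together with Castella--Wan's signed Heegner point main conjecture (Theorem \ref{thm:heegner-pt-main-conj-signed}). That main conjecture is only available for this choice of $K$: its proof uses Eisenstein congruences on $\mathrm{GU}(3,1)$, and the Heegner points come from the Shimura curve $X_{N^+,N^-}$. So for supersingular $p$ and your $K$, the step ``Theorem \ref{thm:main} gives $\ks^{\mathrm{Hg}}\neq 0$'' is not justified.

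The repair is to let $K$ depend on the reduction type at $p$.
\begin{itemize}
\item For ordinary $p$, take $K$ satisfying the classical Heegner hypothesis, with $p$ split and $E^K$ of analytic rank $\leq 1$. These are the conditions in the Remark after \S\ref{subsubseec:choice-K}, and your existence argument works here.
\item For supersingular $p$, take $K$ exactly as in \S\ref{subsubseec:choice-K}. Its existence follows from Friedberg--Hoffstein's nonvanishing theorem, applied with these generalized local conditions as in Castella--Wan's choice of $K$, not from a classical-Heegner construction.
\end{itemize}
The resulting (str) and (constr) are statements relative to that $K$ and its (Shimura curve or mock) Heegner points, which is all the corollary asserts.
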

It should be noted that there is no assumption on the tame ramification on the Galois representation in Theorems \ref{thm:main-ordinary} and \ref{thm:main}, and it is essential to have the ``global" applications of Corollary \ref{cor:main-all-but} including Corollary \ref{cor:ciperiani-wiles} on the theorem of \c{C}iperiani--Wiles below.
All but finitely many primes (e.g. every good reduction prime $p \geq 11$ \cite{mazur-rational-isogenies}) satisfy the condition in Corollary \ref{cor:main-all-but}.


\subsection{The existence of solvable points \`{a} la \c{C}iperiani--Wiles}
We partially recover the theorem of \c{C}iperiani--Wiles on the existence of solvable points on genus one curves with semi-stable Jacobian \cite{ciperiani-wiles}.
\begin{thm} \label{thm:ciperiani-wiles}
Let $E$ be a semi-stable elliptic curve over $\mathbb{Q}$ of conductor $N$.
Every element of $\sha(E/\mathbb{Q})[p^\infty]$ with $(N,p)=1$ splits in a finite ring class extension of a certain imaginary quadratic field.
In particular, every element of $\sha(E/\mathbb{Q})$ whose order is prime to $N$ splits in a finite solvable extension of $\mathbb{Q}$.
\end{thm}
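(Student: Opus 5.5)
Since $\sha(E/\mathbb{Q})$ is torsion, I will split an element of order prime to $N$ into its $p$-primary components. Each such $p$ is then a good reduction prime, and it suffices to treat $\xi \in \sha(E/\mathbb{Q})[p^\infty]$ for a single prime $p \nmid N$. A ring class field $K[n]$ of an imaginary quadratic field $K$ is Galois over $\mathbb{Q}$ with generalized dihedral group, hence solvable. The compositum of finitely many such fields, possibly for different $K$'s, is therefore solvable over $\mathbb{Q}$, so the second assertion follows from the first.

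For the primes to which the machinery applies ($p \geq 5$, $E$ non-CM, $E[p]$ with large image), I fix $K$ as in \S\ref{subsubseec:choice-K}. The key input is Kolyvagin's conjecture: Theorem \ref{thm:main-ordinary} covers ordinary $p$, and Theorem \ref{thm:main} covers supersingular $p$ using semi-stability. From this I get (constr), Theorem \ref{thm:construction-rough}: $\mathrm{Sel}(\mathbb{Q}, E[p^\infty])$ lies in the submodule generated by $\ks^{\mathrm{Hg}}$ in $\mathrm{H}^1(K, E[p^\infty])$. More concretely, Theorem \ref{thm:kolyvagin-construction-theorem-original} says $p^{\partial}\mathrm{Sel}(K, E[p^{k+\partial}])$ is spanned by classes $\kappa^{\mathrm{Hg}}_{n}$ with $n \in \mathcal{N}_{k+\partial}$.

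By construction, each $\kappa^{\mathrm{Hg}}_n$ is the descent of the Kummer image of the derived Heegner point $P_n \in E(K[n])$. Its restriction to $\mathrm{H}^1(K[n], E[p^{k+\partial}])$ therefore lies in the image of $E(K[n])/p^{k+\partial}$, so the image of $\kappa^{\mathrm{Hg}}_n$ in $\mathrm{H}^1(K[n], E)$ vanishes. Large image gives $\mathrm{H}^0(K[n], E[p^{k+\partial}]) = 0$, so restriction from $K$ to $K[n]$ is injective on $E[p^{k+\partial}]$-cohomology. This lets me pass between the classes over $K$ and their Kummer descriptions without losing information.

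Now take $\xi$ of order $p^j$, and lift it to $s \in \mathrm{Sel}(\mathbb{Q}, E[p^{j}])$ whose image in $\mathrm{H}^1(\mathbb{Q},E)$ is $\xi$. I will choose $k$ large, push $s$ into $\mathrm{Sel}(K, E[p^{k+\partial}])$, and arrange that it lies in $p^{\partial}\mathrm{Sel}(K,E[p^{k+\partial}])$. Divisibility in $\sha[p^\infty]$ needs care here, but choosing $k \geq j+\partial$ and using that $p^{k+\partial-1}$ kills the non-divisible part should let me write $s = p^{\partial}s'$. Then $s = \sum_i a_i \kappa^{\mathrm{Hg}}_{n_i}$ by Theorem \ref{thm:kolyvagin-construction-theorem-original}. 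Setting $n = \mathrm{lcm}(n_i)$, each $\kappa^{\mathrm{Hg}}_{n_i}$ becomes a Kummer image over $K[n]$, hence so does $s$, and $\xi$ dies in $\mathrm{H}^1(K[n], E)$.

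The main obstacle is this compatibility between $p^\infty$-Selmer classes, the finite-level Selmer groups $\mathrm{Sel}(K, E[p^{k+\partial}])$, and the shift by $p^{\partial}$. I would handle it exactly as in the proof of (constr), using the exponent bound $p^{k+\partial-1}\mathrm{Sel}_{/\mathrm{div}}=0$.

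The small or non-large-image primes $p \nmid N$ ($p \le 3$, or $E[p]$ reducible) are not covered by this argument. For these I expect the stated theorem to be meant only for primes where the hypotheses of Corollary \ref{cor:main-all-but} hold, i.e. "partially recover". For the remaining finitely many primes I would either impose that restriction or note that an element of order $p^j$ in $\sha$ splits over some degree-$p^{2j}$-bounded extension of solvable type (e.g. via $E[p^j]$-torsors). I would flag this as the genuinely uncertain step.
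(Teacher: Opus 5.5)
Your skeleton is the paper's: Kolyvagin's conjecture (Theorems \ref{thm:main-ordinary} and \ref{thm:main}) gives (constr), and the restriction of $\kappa^{\mathrm{Hg}}_n$ to $K[n]$ is the Kummer image $\delta_n([P_n])$, so it dies in $\mathrm{H}^1(K[n],E)$. The gap is at the decisive step, where you replace Theorem \ref{thm:construction-rough} by Theorem \ref{thm:kolyvagin-construction-theorem-original}; there the argument breaks.

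The module $\mathrm{Hg}_k\simeq(\mathbb{Z}/p^k\mathbb{Z})^{\oplus r^+_p}$ is spanned by the classes $\kappa^{\mathrm{Hg}}_{n'_i}$ with $\nu(n'_i)=\mathrm{ord}(\ks^{\mathrm{Hg}})$. These are unramified everywhere and lie in $p^{\partial}\mathrm{Sel}(\mathbb{Q},E[p^{k+\partial}])$, so $\mathrm{Hg}_k$ only triangulates the maximal divisible subgroup (Remark \ref{rem:tate-shafarevich-L}).

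Your claim that one can write $s=p^{\partial}s'$ does not follow from $p^{k+\partial-1}\mathrm{Sel}(K,E[p^\infty])_{/\mathrm{div}}=0$: an exponent bound gives no $p^{\partial}$-divisibility. The claim is also false in general. The paper uses in the proof of Theorem \ref{thm:main-finiteness} that $p^{\partial}$ annihilates the non-divisible quotient of the Selmer group. Hence every class $p^{\partial}s'$ lies in $\mathrm{Sel}(\mathbb{Q},E[p^\infty])_{\mathrm{div}}$, and its image lies in $\sha(E/\mathbb{Q})[p^\infty]_{\mathrm{div}}$. Your argument therefore reaches no $\xi$ outside the divisible part of $\sha$. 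In particular it proves nothing when $\sha(E/\mathbb{Q})[p^\infty]$ is finite, which is the expected case. The ``main obstacle'' you flag is not bookkeeping; it is created by the choice of theorem.

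The repair, which is the paper's entire proof, is to use Theorem \ref{thm:construction-rough} itself. It places all of $\mathrm{Sel}(\mathbb{Q},E[p^\infty])$, non-divisible part included, inside $\langle\kappa^{\mathrm{Hg}}_n : n\in\mathcal{N}_1\rangle$, using classes for arbitrary $n\in\mathcal{N}_1$ that are in general ramified at the primes dividing $n$ and need not be Selmer classes. Write $s=\sum_i a_i\kappa^{\mathrm{Hg}}_{n_i}$ with $\kappa^{\mathrm{Hg}}_{n_i}\in\mathrm{H}^1(K,E[I_{n_i}])$. Your Kummer observation requires no Selmer condition, so $\xi$ dies over $K[n]$ with $n$ the product of all primes dividing the $n_i$. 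No $p^{\partial}$-shift or choice of $k$ enters.

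Concerning $p\leq 3$ and reducible $E[p]$: the paper's proof likewise treats only good $p\geq 5$ with irreducible $E[p]$ (for semi-stable $E$, equivalently large image). So your caveat matches what is actually proved, and the ring-class assertion is not established at those primes. Your torsor fallback can give the solvability assertion there, but the degree bound is not the reason. $G_{\mathbb{Q}}$ acts on an $E[p^j]$-torsor through $E[p^j]\rtimes\mathrm{GL}_2(\mathbb{Z}/p^j\mathbb{Z})$. This image is solvable because the image of $G_{\mathbb{Q}}$ in $\mathrm{GL}_2(\mathbb{Z}/p^j\mathbb{Z})$ is solvable when $p\leq 3$ or $E[p]$ is reducible. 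That yields a solvable splitting field, but not a ring class field.
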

\begin{proof}
For the mod $p$ representation of semi-stable elliptic curves, the large image assumption is equivalent to the irreducibility assumption \cite{edixhoven-serre-conjecture}.
By Corollary \ref{cor:main-all-but}, we are able to apply (constr) to every good reduction prime $p \geq 5$ with irreducible mod $p$ representation, the conclusion follows.
\end{proof}
The solvable extension used in Theorem \ref{thm:ciperiani-wiles} can be chosen to be unramified at $p$ and any given finite set of primes \cite[Rem. 19]{wei-zhang-mazur-tate}.
In this sense, this choice is more controlled than the solvable extension used in \cite{ciperiani-wiles} when the analytic rank is greater than one.
This refined control of the solvable extension plays an important role for Theorem \ref{thm:main-finiteness}.
For instance, no result like Theorem \ref{thm:main-finiteness} is observed in \cite{ciperiani-wiles} since their patching argument (cf. \cite{wiles,taylor-wiles}) constructs certain generalized Selmer groups which are larger than classical Selmer groups when the analytic rank is greater than one.

\begin{cor} \label{cor:ciperiani-wiles}
Let $C$ be a smooth projective curve of genus one over $\mathbb{Q}$ such that $C(\mathbb{Q}_v) \neq \emptyset$ for every place $v \leq \infty$.
If its Jacobian $J = \mathrm{Jac}(C)$ is a semi-stable elliptic curve of (square-free) conductor $N$ and it satisfies $\sha(J/\mathbb{Q})[N] = 0$, then $C$ admits a point in a solvable extension of $\mathbb{Q}$.
\end{cor}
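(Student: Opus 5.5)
The plan is to reduce the statement to Theorem \ref{thm:ciperiani-wiles} via the standard correspondence between locally soluble genus one curves and elements of the Tate--Shafarevich group of their Jacobian. Since $C(\mathbb{Q}_v) \neq \emptyset$ for every place $v$, the class $[C] \in \mathrm{H}^1(\mathbb{Q}, J)$ of $C$ as a principal homogeneous space under $J$ is locally trivial everywhere. Hence $[C] \in \sha(J/\mathbb{Q})$. Moreover, for any extension $F/\mathbb{Q}$ we have $C(F) \neq \emptyset$ if and only if the restriction of $[C]$ to $\mathrm{H}^1(F, J)$ vanishes. So it suffices to show that $[C]$ splits in some finite solvable extension of $\mathbb{Q}$.

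Next I would decompose $[C]$ into its primary components. Since $\sha(J/\mathbb{Q})$ is a torsion group, $[C]$ has finite order $m$, and $[C] = \sum_{\ell \mid m} x_\ell$ with $x_\ell \in \sha(J/\mathbb{Q})[\ell^\infty]$. The hypothesis $\sha(J/\mathbb{Q})[N] = 0$ forces $\sha(J/\mathbb{Q})[\ell^\infty] = 0$ for every prime $\ell \mid N$, because a nonzero $\ell$-primary group has a nonzero element killed by $\ell$, and $\ell \mid N$. So each nonzero $x_\ell$ has $(\ell, N) = 1$, and therefore $[C]$ itself has order prime to $N$. Here it matters that $N$ is square-free, since $J$ is semi-stable.

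We are then exactly in the situation of the second assertion of Theorem \ref{thm:ciperiani-wiles}. For each $\ell \mid m$ the element $x_\ell$ splits in a finite ring class extension $F_\ell$ of some imaginary quadratic field $K_\ell$. Each $F_\ell$ is Galois and solvable over $\mathbb{Q}$: it is a generalized dihedral extension, abelian over $K_\ell$, which is quadratic over $\mathbb{Q}$. Let $F$ be the compositum of the finitely many $F_\ell$. It is a finite solvable extension of $\mathbb{Q}$, and by functoriality of restriction each $x_\ell$, and hence $[C]$, vanishes in $\mathrm{H}^1(F, J)$. Thus $C(F) \neq \emptyset$.

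There is one subtlety. Theorem \ref{thm:ciperiani-wiles}, via Corollary \ref{cor:main-all-but}, is only literally available for good primes $\ell \geq 5$ with irreducible mod $\ell$ representation. The small primes $\ell = 2, 3$ and primes $\ell$ with reducible mod $\ell$ representation are not covered, and I expect the main obstacle to be checking that these are covered by the cited theorem as stated. If they are not, I would handle them separately: for such $\ell$, an $\ell$-primary element $x$ of order $\ell^a$ can be split by choosing a degree $\ell^a$ torsor trivialization, namely passing to a splitting field of $J[\ell^a]$-cohomology classes. The class $x$ comes from $\mathrm{H}^1(\mathbb{Q}, J[\ell^a])$, so it dies over the field trivializing the corresponding $J[\ell^a]$-torsor. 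That field is a $J[\ell^a]$-extension of $\mathbb{Q}(J[\ell^a])$, and it is solvable when the image of Galois in $\mathrm{GL}_2(\mathbb{Z}/\ell^a)$ is solvable. This is automatic for $\ell \le 3$, since $\mathrm{GL}_2(\mathbb{F}_2)$ and $\mathrm{GL}_2(\mathbb{F}_3)$ are solvable and the kernel to the mod $\ell$ image is an $\ell$-group. It is also automatic in the reducible (Borel) case. With these exceptional primes handled this way, the compositum argument goes through unchanged.
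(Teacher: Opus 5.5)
Your proposal is correct and follows the argument the paper intends (the corollary is stated without a written proof): local solubility puts $[C]$ in $\sha(J/\mathbb{Q})$, the hypothesis $\sha(J/\mathbb{Q})[N]=0$ makes its order prime to $N$, and the second assertion of Theorem \ref{thm:ciperiani-wiles} then gives a finite solvable splitting field. Square-freeness of $N$ is not needed for the step that the order is prime to $N$. Your separate treatment of $\ell = 2,3$ and of primes with reducible mod $\ell$ representation is also correct: you use the solvable image of Galois on $J[\ell^a]$ together with the abelian extension of $\mathbb{Q}(J[\ell^a])$ cut out by the restricted cocycle. This supplies a step the paper leaves implicit, because the proof of Theorem \ref{thm:ciperiani-wiles} only invokes Corollary \ref{cor:main-all-but} for good primes $p \geq 5$ with irreducible mod $p$ representation.
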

The removal of the $\sha(J/\mathbb{Q})[N] = 0$ hypothesis is being investigated in a joint project in progress with Ashay Burungale, Francesc Castella, and Minhyong Kim.

\subsection{The idea of proof and the relations with other work}
In order to obtain Theorem \ref{thm:main-finiteness}, we carefully re-examine a part of Kolyvagin's Heegner point construction of Selmer groups. Then we classify which Kolyvagin cohomology classes at least conjecturally generate the Kummer image of $E(\mathbb{Q})$ by using Kolyvagin's structure theorem of Selmer groups.
Although Kolyvagin's construction theorem is used in proving his structure theorem, we revisit the construction theorem through the lens of the structure theorem to extract more refined information.

Our approach towards Kolyvagin's conjecture is based on Mazur--Rubin's approximation argument on the equivalence between the non-triviality of the specialization of the $\Lambda$-adic Kolyvagin system at a height one prime and the corresponding main conjecture localized at the same prime. See \cite{mazur-rubin-book}, \cite{kim-gross-zagier} for the good ordinary case, and \cite{kim-structure-selmer} for Kato's Euler systems.
A similar but different approach can also be found in \cite{burungale-castella-grossi-skinner-indivisibility, castella-sano} for the good ordinary case.
After establishing the equivalence for the case of supersingular reduction, we apply the result of Castella--Wan on the supersingular analogue of Perrin-Riou's Heegner point main conjecture \cite{perrin-riou-heegner, castella-wan-perrin-riou-ss}, which is based on Heegner point Kolyvagin systems and Eisenstein congruences on $\mathrm{GU}(3,1)$ over $\mathbb{Q}$ \cite{castella-liu-wan}.

Regarding Perrin-Riou's Heegner point main conjecture and Kolyvagin's conjecture for the case of supersingular reduction,  there are several approaches including the works of Sweeting \cite{sweeting-kolyvagin}, Burungale--B\"{u}y\"{u}kboduk--Lei \cite{burungale-buyukboduk-lei-2}, Bertolini--Longo--Venerucci \cite{bertolini-longo-venerucci}, Longo--Pati--Vigni \cite{longo-pati-vigni}, and Da Ronche \cite{daronche}.
All these works basically adapt the approach of W. Zhang \cite{wei-zhang-mazur-tate} via Bertolini--Darmon's level raising congruence argument \cite{bertolini-darmon-imc-2005} and the cyclotomic main conjecture for elliptic curves with supersingular reduction \cite{kobayashi-thesis, kato-euler-systems,skinner-urban, wan_hilbert, burungale-skinner-tian-wan, burungale-castella-skinner-gl2}.

In these approaches, the Heegner point main conjecture is usually regarded an output of Kolyvagin's conjecture (as studied in \cite{burungale-castella-kim}), and certain hypotheses on the tame ramification are needed due to the arithmetic of Shimura curves.
Our approach reveals the precise relation between the Heegner point main conjecture and Kolyvagin's conjecture, is independent of the validity of the cyclotomic main conjecture for elliptic curves, and does not require any hypothesis on the tame ramification. As its consequence, our approach is more direct and simpler than others.

\subsection{Working hypotheses} \label{subsec:working-hypotheses}
We keep the following hypotheses throughout this article.
\subsubsection{}
Let $E$ be a semi-stable elliptic curve over $\mathbb{Q}$ of conductor $N$ and $p \geq 5$ be a good supersingular reduction prime for $E$, so $N$ is square-free and $a_p(E) = 0$.
In this case, it is well-known that the mod $p$ representation $\overline{\rho} : G_{\mathbb{Q}} = \mathrm{Gal}(\overline{\mathbb{Q}}/\mathbb{Q}) \to \mathrm{Aut}_{\mathbb{F}_p}(E[p]) \simeq  \mathrm{GL}_2(\mathbb{F}_p)$ is surjective \cite[Prop. 2.1]{edixhoven-serre-conjecture}.
By the rank one $p$-converse to the theorem of Gross--Zagier and Kolyvagin studied in \cite{castella-wan-perrin-riou-ss}, Theorem \ref{thm:main} is well-known when the Selmer rank is $\leq 1$. Thus, we assume that
$$\mathrm{cork}_{\mathbb{Z}_p}\mathrm{Sel}(\mathbb{Q}, E[p^\infty]) \geq 2.$$
\subsubsection{} \label{subsubseec:choice-K}
Given a pair $(E, p)$, we are able to choose an imaginary quadratic field $K$ of odd discriminant $-D_K < -4$
such that
$$N = N^+ \cdot N^-$$
where 
 a prime divisor of $N^+$ splits or is ramified in $K/\mathbb{Q}$, and
 a prime divisor of $N^-$ is inert in $K/\mathbb{Q}$.
We further assume that $K$ satisfies the following conditions:
\begin{itemize}
\item $\nu(N^-)$ is even.
\item When $N^- =1$, at least one prime divisor of $N$ is ramified in $K/\mathbb{Q}$.
\item 2 splits in $K/\mathbb{Q}$.
\item $p$ splits in $K/\mathbb{Q}$.
\item The quadratic twist $E^K$ by $K$ has analytic rank $\leq 1$.
\end{itemize}
Our choice of $K$ follows that in the proof of \cite[Thm. 6.11]{castella-wan-perrin-riou-ss}, and the existence of such an $K$ follows from the non-vanishing result \cite[Thm. B]{friedberg-hoffstein}.
Under these assumptions, the vanishing order of $L(E/K,s)$ at $s=1$ is odd, i.e. the root number $w(E/K)$  of $E$ over $K$ is $-1$.
Since a prime divisor of $N^+$ can be ramified in $K/\mathbb{Q}$, the case of mock Heegner points is essentially allowed.
\begin{rem}
When we treat the $p$-ordinary case, we only require the following conditions: 
\begin{itemize}
\item Every prime divisor of $N^+$ splits in $K/\mathbb{Q}$, so $N^- =1$.
\item $p$ splits in $K/\mathbb{Q}$.
\item The quadratic twist $E^K$ by $K$ has analytic rank $\leq 1$.
\end{itemize}
\end{rem}
\subsection{The organization of this article}
In \S\ref{sec:review}, we review anticyclotomic Selmer groups, Heegner points, and the Heegner point main conjecture for the supersingular setting as the preliminary material following \cite{castella-wan-perrin-riou-ss} closely.
In \S\ref{sec:kolyvagin-conjecture}, we prove Kolyvagin's conjecture for semi-stable elliptic curves with supersingular reduction.
In \S\ref{sec:applications}, we review Kolyvagin's Selmer structure and construction theorems and prove Theorem \ref{thm:main-finiteness}. Also, we discuss the local analogue of Theorem \ref{thm:main-finiteness}.
\section{Review on Selmer groups and Heegner points} \label{sec:review}
In order to fix the conventions and the notation, we review the notion of anticyclotomic Selmer groups of elliptic curves with supersingular reduction, the formulation and status of the corresponding Heegner point main conjecture, and the construction of Heegner point Kolyvagin systems for the supersingular setting.
We follow \cite[$\S$4 and Appendix A]{castella-wan-perrin-riou-ss} closely and all the details can be found therein. No new result is actually presented in this section.
\subsection{Signed Selmer groups}
\subsubsection{Preparations}
Let  $T =  \varprojlim_{m} E[p^m]$ be the $p$-adic Tate module of $E$, 
$V = T \otimes_{\mathbb{Z}_p} \mathbb{Q}_p$, and $W = V/T \simeq E[p^\infty]$.
Let $K_{\infty}$ be the anticyclotomic $\mathbb{Z}_p$-extension of $K$ and $\Lambda = \mathbb{Z}_p\llbracket \mathrm{Gal}(K_\infty/K) \rrbracket \simeq \mathbb{Z}_p\llbracket X \rrbracket$ the anticyclotomic Iwasawa algebra.
Denote by $K_m$ the cyclic subextension of $K$ of degree $p^m$ in $K_\infty$. 
By using Shapiro's lemma, we have isomorphism 
$\mathrm{H}^1_{\mathrm{Iw}}(K, T) := \varprojlim_m \mathrm{H}^1(K_m, T) \simeq \mathrm{H}^1(K, T \otimes \Lambda)$.

Let $\mathfrak{P} \neq p\Lambda$ be a height one prime of $\Lambda$.
Let $S_{\mathfrak{P}}$ be the integral closure of $\Lambda / \mathfrak{P}$, $\mathfrak{m}_{\mathfrak{P}} $ be its maximal ideal, and $\Psi_{\mathfrak{P}} = \mathrm{Frac} ( S_{\mathfrak{P}} )$.
Let
$T_{\mathfrak{P}} = ( T \otimes \Lambda ) \otimes_{\Lambda, \pi_{\mathfrak{P}}} S_{\mathfrak{P}}$
where 
$\pi_{\mathfrak{P}} : \Lambda \twoheadrightarrow \Lambda / \mathfrak{P} \hookrightarrow S_{\mathfrak{P}}$ is the specialization map at $\mathfrak{P}$.
We similarly define $V_{\mathfrak{P}} = T_{\mathfrak{P}} \otimes \Psi_{\mathfrak{P}}$ and $W_{\mathfrak{P}} = V_{\mathfrak{P}} / T_{\mathfrak{P}}$.
For a Galois module $M$, write 
$M^* = \mathrm{Hom}(M, \mu_{p^\infty})$
Then we have
$(T_{\mathfrak{P}})^*[\mathfrak{m}^k_{\mathfrak{P}}] = (T_{\mathfrak{P}} / \mathfrak{m}^k_{\mathfrak{P}} )^* $.
For a co-finitely generated $\Lambda$-module $M$, denote by $M^\vee$ the Pontryagin dual of $M$.

\subsubsection{Signed Selmer structures}
We recall the notion of signed Selmer structures on $\mathrm{H}^1(K, T \otimes \Lambda)$ and its specializations, and their dual sides.

Following \cite[Def. 4.6 and (4.7)]{castella-wan-perrin-riou-ss}, 
the signed $\Lambda$-adic Selmer structure $\mathcal{F}^{\pm}_{\Lambda}$ on $\mathrm{H}^1(K, T \otimes \Lambda)$ is defined by
\begin{itemize}
\item
$\mathrm{H}^1_{\mathcal{F}^{\pm}_{\Lambda}}(K_v, T \otimes \Lambda) = \mathrm{H}^1_{\pm}(K_v, T \otimes \Lambda)$ when $v$ divides $p$, and
\item $\mathrm{H}^1_{\mathcal{F}^{\pm}_{\Lambda}}(K_v, T \otimes \Lambda) = \mathrm{H}^1(K_v, T \otimes \Lambda)$ when $v$ does not divide $p$.
\end{itemize}
Although we do not reproduce the construction of $\mathrm{H}^1_{\pm}(K_v, T \otimes \Lambda)$ in this article, we make some remarks with precise reference.
The splitting of $p$ in $K/\mathbb{Q}$ plays an essential role in the construction of $\mathrm{H}^1_{\pm}(K_v, T \otimes \Lambda)$ because $\mathrm{H}^1_{\pm}(K_v, T \otimes \Lambda)$ is built upon $\mathrm{H}^1_{\pm}(\mathbb{Q}_p, T \otimes \Lambda)$ \cite[Def. 4.6]{castella-wan-perrin-riou-ss} and $\mathrm{H}^1_{\pm}(\mathbb{Q}_p, T \otimes \Lambda)$ is defined by the specialization of the two-variable signed local condition at $p$ studied in \cite[$\S$3]{castella-wan-perrin-riou-ss}.
We write
$\widehat{\mathrm{Sel}}_{\pm}(K_\infty, T) = \mathrm{H}^1_{\mathcal{F}^{\pm}_{\Lambda}}(K, T \otimes \Lambda) $.
Then the dual signed $\Lambda$-adic Selmer structure $\mathcal{F}^{\pm, *}_{\Lambda}$ on
$\mathrm{H}^1(K, (T \otimes \Lambda)^*)$
is defined by the orthogonal complements of the Selmer structure $\mathcal{F}^{\pm}_{\Lambda}$ via local Tate duality.
We write
$$\mathrm{Sel}^{\pm}( K_\infty, E[p^\infty]) = \mathrm{H}^1_{\mathcal{F}^{\pm, *}_{\Lambda}}(K, (T \otimes \Lambda)^*) .$$
Using $T \otimes \Lambda \to T \otimes \Lambda/ \mathfrak{P} \to T_{\mathfrak{P}} $,
the signed $\Lambda$-adic Selmer structure $\mathcal{F}^{\pm}_{\Lambda}$ induces 
the Selmer structures on $\mathrm{H}^1(K, T \otimes \Lambda/ \mathfrak{P})$
and $\mathrm{H}^1(K, T_{\mathfrak{P}})$ by propagation \cite[Ex. 1.1.2]{mazur-rubin-book}.

We denote both Selmer structures by $\mathcal{F}^{\pm}_{\mathfrak{P}}$ and define
\[
\xymatrix{
\mathrm{Sel}_{\pm}(K, T_{\mathfrak{P}})  = \mathrm{H}^1_{\mathcal{F}^{\pm}_{\mathfrak{P}}}(K, T_{\mathfrak{P}}), &
\mathrm{Sel}_{\pm}(K, T \otimes \Lambda/ \mathfrak{P})  = \mathrm{H}^1_{\mathcal{F}^{\pm}_{\mathfrak{P}}}(K, T \otimes \Lambda/ \mathfrak{P}).
}
\]
Similarly, using $(T_{\mathfrak{P}})^* \to (T \otimes \Lambda/ \mathfrak{P})^* = (T \otimes \Lambda)^*[\mathfrak{P}] \to (T \otimes \Lambda)^*$,
the dual signed $\Lambda$-adic Selmer structure $\mathcal{F}^{\pm,*}_{\Lambda}$ induces 
the Selmer structures on $\mathrm{H}^1(K, (T \otimes \Lambda/ \mathfrak{P})^*)$
and $\mathrm{H}^1(K, (T_{\mathfrak{P}})^*)$ by propagation again.
As before, we denote both (dual) Selmer structures by $\mathcal{F}^{\pm,*}_{\mathfrak{P}}$, and define
\[
\xymatrix{
\mathrm{Sel}^{\pm}(K, (T_{\mathfrak{P}})^*) = \mathrm{H}^1_{\mathcal{F}^{\pm,*}_{\mathfrak{P}}}(K, (T_{\mathfrak{P}})^*), &
\mathrm{Sel}^{\pm}(K, (T \otimes \Lambda/ \mathfrak{P})^*) = \mathrm{H}^1_{\mathcal{F}^{\pm,*}_{\mathfrak{P}}}(K, (T \otimes \Lambda/ \mathfrak{P})^*).
}
\]
It is known that the $\pm$-local conditions at the primes dividing $p$ are self-dual with respect to the local Tate pairing as explained in the proof of \cite[Thm. A.5]{castella-wan-perrin-riou-ss} which is based on \cite[Prop. 4.11]{bdkim-supersingular-parity}.
Therefore, the signed Selmer structures are self-dual, so they satisfy Howard's standard hypotheses for Heegner point Kolyvagin systems; in particular, \cite[H.4), p. 1446]{howard-kolyvagin}.

\subsubsection{Specializations}
We recall a comparison result and can identify the specialization of the plus Selmer group at the augmentation ideal with the classical Selmer group.
\begin{prop} \label{prop:specializations-Selmer}
There exists a finite set $\Sigma_{\Lambda}$ of height one primes of $\Lambda$ with $p\Lambda \in \Sigma_{\Lambda}$
such that for every $\mathfrak{P} \not\in  \Sigma_{\Lambda}$
the composite of natural maps
\[
\xymatrix@R=0em{
\dfrac{ \widehat{\mathrm{Sel}}_{\pm}(K_{\infty}, T) }{ \mathfrak{P} \widehat{\mathrm{Sel}}_{\pm}(K_{\infty}, T) }
 \ar[r] & \mathrm{Sel}_{\pm}(K, T \otimes \Lambda/ \mathfrak{P}) \ar[r]^-{(*)} & \mathrm{Sel}_{\pm}(K, T_{\mathfrak{P}})  , \\
 \mathrm{Sel}^{\pm}(K, (T_{\mathfrak{P}})^*) \ar[r]^-{(**)} &
 \mathrm{Sel}^{\pm}(K, (T \otimes \Lambda/ \mathfrak{P})^*) \ar[r] &
 \mathrm{Sel}^{\pm}(K_\infty, E[p^\infty])[\mathfrak{P}]
}
\]
have finite kernel and cokernel of order bounded by a constant depending only on $[S_{\mathfrak{P}} : \Lambda/\mathfrak{P}]$.
When $\mathfrak{P}$ is the augmentation ideal $X\Lambda$, the maps $(*)$ and $(**)$ are isomorphisms and
we have identifications $\mathrm{Sel}_{+}(K, T) = \mathrm{Sel}(K, T)$ and $\mathrm{Sel}(K, E[p^\infty]) =  \mathrm{Sel}^{+}(K, T^*)$.
\end{prop}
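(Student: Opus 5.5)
The plan is to follow Mazur--Rubin's control theorem \cite[Prop. 5.3.14]{mazur-rubin-book} and its adaptation in \cite[Appendix A]{castella-wan-perrin-riou-ss}. I treat the two rows separately and then assemble the composite maps.

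\emph{First map of the top row.} For $\mathfrak{P}$ a height one prime with generator $f$, multiplication by $f$ on $T\otimes\Lambda$ gives the short exact sequence $0\to T\otimes\Lambda \xrightarrow{f} T\otimes\Lambda \to T\otimes\Lambda/\mathfrak{P}\to 0$. The associated long exact sequence identifies $\mathrm{H}^1(K,T\otimes\Lambda)/\mathfrak{P}$ with a submodule of $\mathrm{H}^1(K,T\otimes\Lambda/\mathfrak{P})$; the cokernel injects into $\mathrm{H}^2(K,T\otimes\Lambda)[\mathfrak{P}]$. Since $\mathrm{H}^2(K,T\otimes\Lambda)$ (over $K_\Sigma$) is a finitely generated $\Lambda$-module, its torsion is supported at finitely many height one primes. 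Those primes go into $\Sigma_\Lambda$, together with $p\Lambda$ and the primes in the support of the torsion of the relevant local cohomology groups.

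Next I compare the local conditions. Away from $p$, the $\Lambda$-adic condition is the full group, and the propagated condition differs from the full group only through $\mathrm{H}^1(K_v,T\otimes\Lambda)_{\mathrm{tors}}$ and $\mathrm{H}^2(K_v,T\otimes\Lambda)[\mathfrak{P}]$. These are finite, with order bounded independently of $\mathfrak{P}\notin\Sigma_\Lambda$, because $E[p]$ has large image and there are finitely many bad places. At $v\mid p$, I use that $\mathrm{H}^1_\pm(K_v,T\otimes\Lambda)$ is free of rank one over $\Lambda$ and is a direct summand up to a bounded error; this is the local input of \cite[\S 3--4]{castella-wan-perrin-riou-ss}. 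Reducing this summand modulo $\mathfrak{P}$ then matches the propagated condition up to finite error.

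\emph{The map $(*)$.} This passes from $\Lambda/\mathfrak{P}$ to its normalization $S_{\mathfrak{P}}$, a finite extension with index $[S_{\mathfrak{P}}:\Lambda/\mathfrak{P}]$. The kernel and cokernel of $\mathrm{H}^1(K,T\otimes\Lambda/\mathfrak{P})\to \mathrm{H}^1(K,T_{\mathfrak{P}})$ are controlled by $\mathrm{H}^0$ and $\mathrm{H}^1$ of $T\otimes (S_{\mathfrak{P}}/(\Lambda/\mathfrak{P}))$. This module has length bounded in terms of the index, and $\mathrm{H}^0$ vanishes by irreducibility of $E[p]$. The same bound applies locally.

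\emph{Dual row and augmentation ideal.} The dual row follows by the identical argument applied to $(T\otimes\Lambda)^*[\mathfrak{P}]$. Equivalently, one can use the self-duality of the $\pm$ conditions recalled above and Poitou--Tate to deduce it from the top row. For $\mathfrak{P}=X\Lambda$ we have $\Lambda/\mathfrak{P}=\mathbb{Z}_p=S_{\mathfrak{P}}$, so $(*)$ and $(**)$ are identities. To identify the groups with the classical Selmer groups, I use the fact that the specialization of $\mathrm{H}^1_+(\mathbb{Q}_p,T\otimes\Lambda)$ at the trivial character is $E(\mathbb{Q}_p)\otimes\mathbb{Z}_p=\mathrm{H}^1_f$; this is the plus norm-compatible point description of Kobayashi, Rubin, and B.D. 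Kim. At $v\nmid p$ I use that $\mathrm{H}^1_f=\mathrm{H}^1$ for $T$ up to torsion, and that the torsion vanishes under the Tamagawa and large image hypotheses. This is all recorded in \cite[Appendix A]{castella-wan-perrin-riou-ss}.

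\emph{Main obstacle.} I expect the hardest step to be the local analysis at $v\mid p$. There one needs that the specialization of the signed condition is exactly the propagated condition with uniformly bounded error, and that it equals $\mathrm{H}^1_f$ at the augmentation ideal. I would import both statements directly from \cite{castella-wan-perrin-riou-ss} rather than reprove them.
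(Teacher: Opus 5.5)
\textbf{There is a genuine gap in the compact row.} Your overall route is the Mazur--Rubin/Howard control argument that underlies \cite[Lem.~6.5]{castella-wan-perrin-riou-ss}, and that lemma is all the paper cites. But your treatment of the compact row omits the one non-formal term, so the $\Sigma_\Lambda$ you specify is too small.

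Write $\mathcal{L}=\bigoplus_{v\mid p}\mathrm{H}^1(K_v,T\otimes\Lambda)/\mathrm{H}^1_{\pm}(K_v,T\otimes\Lambda)$, and let $Q\subseteq\mathcal{L}$ be the image of $\mathrm{H}^1(K,T\otimes\Lambda)$. Your direct-summand input makes $\mathcal{L}$ torsion-free, and that only gives injectivity of $\widehat{\mathrm{Sel}}_{\pm}(K_\infty,T)/\mathfrak{P}\to\mathrm{Sel}_{\pm}(K,T\otimes\Lambda/\mathfrak{P})$.

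For the cokernel, take $\mathfrak{P}=(f)$ and a class $c\in\mathrm{H}^1(K,T\otimes\Lambda)$ whose reduction lies in $\mathrm{Sel}_{\pm}(K,T\otimes\Lambda/\mathfrak{P})$. All you learn is that the image of $c$ in $\mathcal{L}$ equals $f\ell$ for some $\ell\in\mathcal{L}$. You can correct $c$ by $f$ times a global class into $\widehat{\mathrm{Sel}}_{\pm}(K_\infty,T)$ exactly when $\ell\in Q$. So besides the $\mathrm{H}^2(K,T\otimes\Lambda)[\mathfrak{P}]$ term you found, the cokernel contains $(\mathcal{L}/Q)[\mathfrak{P}]$.

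This term is global, not local. Poitou--Tate gives an exact sequence $0\to\mathcal{L}/Q\to\mathrm{Sel}^{\pm}(K_\infty,E[p^\infty])^\vee\to\mathrm{Sel}_0(K_\infty,E[p^\infty])^\vee\to 0$, where $\mathrm{Sel}_0$ is the fine Selmer group. Hence every prime dividing $\mathrm{char}_\Lambda(M^{\pm}_\infty)$ but not the characteristic ideal of the fine Selmer dual lies in the support of $(\mathcal{L}/Q)_{\Lambda\text{-tors}}$. In general such primes belong to none of the sets you listed: the torsion of $\mathrm{H}^2(K,T\otimes\Lambda)$ is governed by the fine Selmer group and local terms.

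At such a $\mathfrak{P}$ the asserted bound actually fails. The module $\widehat{\mathrm{Sel}}_{\pm}(K_\infty,T)/\mathfrak{P}$ has rank one. On the other hand, the self-dual Selmer group $\mathrm{Sel}_{\pm}(K,T_{\mathfrak{P}})$ has rank equal to the corank of $\mathrm{Sel}^{\pm}(K,(T_{\mathfrak{P}})^*)$. Up to finite error that group is $\mathrm{Sel}^{\pm}(K_\infty,E[p^\infty])[\mathfrak{P}]$, whose corank is at least $3$.

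The repair is to put the support of $(\mathcal{L}/Q)_{\Lambda\text{-tors}}$ into $\Sigma_\Lambda$. For the remaining primes, bound $(\mathcal{L}/Q)[\mathfrak{P}]$ by the maximal finite submodule.

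Relatedly, the step you flag as the main obstacle is automatic here. The local conditions on $T\otimes\Lambda/\mathfrak{P}$ and $T_{\mathfrak{P}}$ are \emph{defined} by propagation, that is, as images of $\mathrm{H}^1_{\pm}(K_v,T\otimes\Lambda)$. The real difficulty is global-to-local, as above.

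\textbf{A smaller point at $X\Lambda$.} Your justification away from $p$ invokes a Tamagawa hypothesis that is not in force; the paper explicitly allows $p\mid\mathrm{Tam}(E/K)$. Large image of $\overline{\rho}$ also does not force $E(K_v)[p]=0$. The real issue is the cokernel $\mathrm{H}^2(K_v,T\otimes\Lambda)[X]$ of $\mathrm{H}^1(K_v,T\otimes\Lambda)\to\mathrm{H}^1(K_v,T)$ at primes finitely decomposed in $K_\infty$. This identification should simply be imported from \cite[(6.16)]{castella-wan-perrin-riou-ss}, as the paper does.
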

\begin{proof}
See \cite[Lem. 6.5]{castella-wan-perrin-riou-ss}, and \cite[(6.16)]{castella-wan-perrin-riou-ss} for the
$\mathfrak{P} = X\Lambda$ case.
\end{proof}

\subsubsection{Transverse local conditions}
For a prime $v$ of $K$ with residue characteristic not equal to $p$, the transverse local condition at $v$ is defined by
$\mathrm{H}^1_{\mathrm{tr}}(K_v, M) := \mathrm{ker} \left( \mathrm{H}^1(K_v, M) \to \mathrm{H}^1(L, M) \right)$
where $M$ is a $\mathrm{Gal}(\overline{K}_v/K_v)$-module and $L$ is a maximal totally tamely ramified abelian $p$-extension of $K_v$.
The  transverse local condition is also orthogonal to each other under the local Tate pairing \cite[Prop. 1.1.9]{howard-kolyvagin}.
The $n$-tranverse Selmer structure means that the local condition at the prime not dividing $n$ is the usual one and the local condition at the prime dividing $n$ is the transverse local one.

\subsection{Heegner points}
For more detailed constructions of Euler systems of Heegner points on modular and Shimura curves and the associated Kolyvagin systems, we refer to \cite{kolyvagin-euler-systems, gross-kolyvagin, ye-tian-thesis, howard-kolyvagin, howard-gl2-type, nekovar-euler-systems}.
\subsubsection{Heegner points on Shimura curves} \label{subsubsec:review-heegner-points}
Let $X_{N^+, N^-}$ be the Shimura curve over $\mathbb{Q}$ associated to the quaternion algebra over $\mathbb{Q}$ of discriminant $N^-$ and an Eichler order of level $N^+$, and $J_{N^+, N^-}$ be the Jacobian of $X_{N^+, N^-}$.
Fix a geometric Jacquet--Langlands parametrization
$\pi_{N^+, N^-} : J_{N^+, N^-}  \to E $.
For every positive integer $s$, $\mathcal{O}_s = \mathbb{Z} + s \mathcal{O}_K$ be the order of $K$ of conductor $s$
and denote by $K[s]$ the ring class field of $K$ of conductor s.
Then the Artin map induces an isomorphism $\mathrm{Gal}(K[s]/K) \simeq \mathrm{Pic}(\mathcal{O}_s)$.

Following \cite[Prop. 1.2.1]{howard-gl2-type}, there exists the collection of CM points on Shimura curve $X_{N^+, N^-}$ defined over ring class fields
$$x_s \in X_{N^+, N^-}(K[s])$$
satisfying the Euler system relation.
Sine $E[p]$ is irreducible, there exists a prime $q$ not dividing $Nq$ such that $a_q(E) - q -1$ is a $p$-adic unit.
The image of $x_s$ under the map 
\[
\xymatrix@R=0em@C=1em{
X_{N^+, N^-} \ar[r]^-{\iota_{N^+, N^-}} &  J_{N^+, N^-} \ar[r]^-{\pi_{N^+, N^-}} & E(K[s]) \ar[r]^-{\otimes (a_q(E) - q -1)^{-1}} & E(K[s]) \otimes \mathbb{Z}_p \\
x_s \ar@{|->}[r] & \iota_{N^+, N^-}(x_s) := (T_q - q - 1)x_s \ar@{|->}[rr] &  &
\pi_{N^+, N^-}(\iota_{N^+, N^-}(x_s)) \otimes (a_q(E) - q -1)^{-1}
}
\]
is denoted by $y_s \in E(K[n]) \otimes \mathbb{Z}_p$, and it is independent of the choice of $q$ where $\iota_{N^+, N^-}$ is the Abel--Jacobi map.

%
%
We write $s = np^m$ with positive integer $n$ prime to $p$ and integer $m \geq 0$, so $y_{np^m} \in E(K[np^m]) \otimes \mathbb{Z}_p.$ is the Heegner point of conductor $np^m$.
Then the \textbf{Heegner cohomology class over $K[np^m]$}
$$z[np^m] \in \mathrm{H}^1(K[np^m], T)$$
 is defined as the image of $y_{np^m}$ under the Kummer map
$E(K[np^m]) \widehat{\otimes} \mathbb{Z}_p \to \mathrm{H}^1(K[np^m], T) $.
Let $\mathrm{cores}_{m+1, m}$ denote the corestriction map for $K[np^{m+1}]/K[np^m]$.
It is known that the collection of Heegner cohomology classes forms an (anticyclotomic) Euler system for $T$   \cite[\S1]{kolyvagin-euler-systems}, \cite[\S3]{gross-kolyvagin}.
Then the norm compatibility of Heegner points \cite[Prop. 4.1]{castella-wan-perrin-riou-ss} with the $a_p = 0$ condition yields
$\mathrm{cores}_{m+1, m} \left( z[np^{m+1}] \right) = - z[np^m] $
for every $m \geq 1$.

\subsubsection{Construction of signed Iwasawa--Heegner cohomology classes}
We give a sketch of the construction of signed Iwasawa--Heegner  cohomology classes following \cite[\S4.1]{castella-wan-perrin-riou-ss}.

Write 
$\mathrm{Gal}(K[p^\infty]/K) \simeq \Delta \times \mathrm{Gal}(K_\infty/K)$
where $\Delta$ is the torsion subgroup of $\mathrm{Gal}(K[p^\infty]/K)$.
We write 
$L = \left( K[p^\infty] \right)^{\mathrm{Gal}(K_\infty/K)}$ and
$L_{m+1} = \left( K[p^\infty] \right)^{\mathrm{Gal}(K_\infty/K_m)}$ for each $m$.
Then $\mathrm{Gal}(L_{n+1}/K) \simeq \Delta \times \mathrm{Gal}(K_n/K)$.
Then there exists a non-negative integer $\delta$ such that $L_{m+1+\delta} = \left( K[p^m] \right)^{\Delta}$ for $m \gg 0$.
When $p$ does not divides the class number of $K$, $\delta = 0$.
We write $\widetilde{\Lambda} = \mathbb{Z}_p\llbracket \mathrm{Gal}(K[p^\infty]/K) \rrbracket$.
For any positive integer $n$ prime to $p$,
let $\mathrm{H}^1_{\mathcal{I}w}(K[n], T) = \varprojlim_{m} \mathrm{H}^1(K[np^m], T) \simeq \mathrm{H}^1(K[n], T \otimes \widetilde{\Lambda})$
where the last isomorphism follows from Shapiro's lemma. 
Then it is known that  $z[np^{m}]$ lies in the image of the corestriction map
$\mathrm{H}^1_{\mathcal{I}w}(K[n], T) \to \mathrm{H}^1(K[np^m], T)$ \cite[Lem. 4.2]{castella-wan-perrin-riou-ss} and
$\mathrm{H}^1_{\mathcal{I}w}(K[n], T)$ is free over $\Lambda$ \cite[Lem. 4.3]{castella-wan-perrin-riou-ss}.

Let $\widetilde{\omega}^{+}_m(X) = \prod_{2 \leq i \leq m, i \textrm{ even}} \Phi_i(1+X)$ and
$\widetilde{\omega}^{-}_m(X) = \prod_{2 \leq i \leq m, i \textrm{ odd}} \Phi_i(1+X)$
where $\Phi_i(X)$ is the $p^i$-th cyclotomic polynomial, and set
$\omega^{\pm}_m(X) = X \cdot \widetilde{\omega}^{\pm}_m(X)$.
We also let $\omega_m(X) = (1+X)^{p^m} - 1 = \omega^{\pm}_m(X) \cdot \widetilde{\omega}^{\mp}_m(X)$.

\begin{prop}
Let $\epsilon = \epsilon(m)$ be the sign of $(-1)^m$.
Then there exists a unique cohomology class
$z^{\epsilon}_{m}[n] = z^{\epsilon(m)}_{m}[n] \in \mathrm{H}^1(K[n], T \otimes \Lambda) / \omega^{\epsilon(m)}_m$
such that $\omega^{-\epsilon}_m(X) \cdot z^{\epsilon}_m[n]  = \mathrm{cores} (z[np^{m+1-\delta}])$
where $\mathrm{cores}$
is the corestriction map from $K[np^{m+1-\delta}]$ to $K_m[n]$ and $K_m[n]$ is the compositum of $K_m$ and $K[n]$.
Furthermore, the sequences 
$\left\lbrace (-1)^{m/2}\cdot z^{+}_m[n] \right\rbrace_{m, \textrm{ even}}$ and
$\left\lbrace (-1)^{(m+1)/2}\cdot z^{-}_m[n] \right\rbrace_{m, \textrm{ odd}}$
are compatible under the natural maps
$\mathrm{H}^1(K[n], T \otimes \Lambda)/\omega^{\epsilon}_m(X)
\to
\mathrm{H}^1(K[n], T \otimes \Lambda)/\omega^{\epsilon}_{m-2}(X) $.
\end{prop}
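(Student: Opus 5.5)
We would follow the strategy of Kobayashi's construction of plus/minus classes, as adapted in \cite[\S4.1]{castella-wan-perrin-riou-ss}. We take as given the two facts recalled above: $\mathrm{H}^1_{\mathcal{I}w}(K[n], T)$ is free over $\Lambda$ (so $\mathrm{H}^1(K[n], T \otimes \Lambda)$ is $\Lambda$-torsion free after taking $\Delta$-invariants), and each $z[np^{m}]$ lifts to $\mathrm{H}^1_{\mathcal{I}w}(K[n], T)$. Passing to $\Delta$-invariants and corestricting from $K[np^{m+1-\delta}]$ to $K_m[n]$, we regard $c_m := \mathrm{cores}(z[np^{m+1-\delta}])$ as an element of $\mathrm{H}^1(K[n], T\otimes\Lambda)/\omega_m(X)$ via Shapiro's lemma.

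\textbf{Step 1: iterate the norm relation.} From $\mathrm{cores}_{m+1,m}(z[np^{m+1}]) = -z[np^m]$, the class $c_m$ satisfies the three-term relation typical of $a_p=0$: its image in level $m-1$ equals $-c_{m-2}$, viewed inside level $m-1$ by restriction. In group-ring terms this reads $\mathrm{Tr}_{m/m-1}\, c_m = -\,\mathrm{res}\, c_{m-2}$, where the trace corresponds to multiplication by $\Phi_m(1+X)$. Inducting on $m$ and using $\Phi_m(1+X)\equiv p$-type congruences, we show that $c_m$ is annihilated modulo $\omega_m$ by $\omega^{\epsilon}_m(X)$. Equivalently, $c_m$ lies in $\omega^{-\epsilon}_m(X)\cdot \mathrm{H}^1(K[n],T\otimes\Lambda)/\omega_m$. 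Here $\epsilon=(-1)^m$, and this is the sign for which the alternating cyclotomic factors match. The divisibility itself should come from the freeness statement: a class killed by $\omega^{\epsilon}_m$ in $M/\omega_m M$ with $M$ free equals $\omega^{-\epsilon}_m$ times something, because $\omega_m=\omega^{\epsilon}_m\widetilde{\omega}^{-\epsilon}_m$. The base cases $m=0,1$ need the relation with $K[n]$ itself, where we also use $\omega^\pm_m$ containing the factor $X$. This requires a little care and is the first place we would check computations.

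\textbf{Step 2: existence and uniqueness.} Since $M=\mathrm{H}^1(K[n],T\otimes\Lambda)$ is free, multiplication by $\omega^{-\epsilon}_m$ induces an injection
\[
M/\omega^{\epsilon}_m \hookrightarrow M/\omega_m,
\]
because $\omega_m=\omega^{\epsilon}_m\widetilde\omega^{-\epsilon}_m$ and $\omega^{-\epsilon}_m$ and $\widetilde\omega^{-\epsilon}_m$ agree up to the factor $X$. We must handle this common factor precisely: the correct statement uses $\omega^{-\epsilon}_m=X\widetilde\omega^{-\epsilon}_m$, and the quotient $M/\omega^\epsilon_m$ with $\omega^\epsilon_m$ containing $X$ absorbs it. Given the divisibility from Step 1, we define $z^\epsilon_m[n]$ as the unique preimage. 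Uniqueness follows from torsion-freeness, since $\omega^{-\epsilon}_m$ and $\omega^{\epsilon}_m/X$ are coprime.

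\textbf{Step 3: compatibility.} We compare $\omega^{-\epsilon}_m z^\epsilon_m[n]$ with $\omega^{-\epsilon}_{m-2} z^\epsilon_{m-2}[n]$ using $c_m\equiv -\,\Phi_{m-1}(1+X)^{-1}$-type relations from Step 1. Note that $\omega^{-\epsilon}_m=\Phi_{m-1}(1+X)\,\omega^{-\epsilon}_{m-2}$, and that $\Phi_{m-1}(1+X)$ acts on $M/\omega^\epsilon_{m-2}$ through the trace. After cancelling via uniqueness, this gives $z^\epsilon_m \equiv -z^\epsilon_{m-2} \bmod \omega^\epsilon_{m-2}$, and the sign twists $(-1)^{m/2}$ and $(-1)^{(m+1)/2}$ exactly remove the $-1$. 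We expect the main obstacle to be the bookkeeping in Step 1: the correct treatment of $\delta$ and $\Delta$-invariants, together with showing that the trace relation translates into the precise divisibility by $\omega^{-\epsilon}_m$ rather than by some unit multiple or a missing factor of $X$.
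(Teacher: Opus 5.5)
Your route is the one behind the paper's proof, which simply cites Castella--Wan \cite[Prop.~4.4]{castella-wan-perrin-riou-ss}. That proof is Kobayashi's argument: the three-term relation $\mathrm{cores}_{m/m-1}(c_m)=-\mathrm{res}_{m-1/m-2}(c_{m-2})$ (which you correctly use in place of the garbled two-term relation printed in \S2), freeness of $M=\mathrm{H}^1(K[n],T\otimes\Lambda)$, and cancellation.

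\textbf{The gap: the factor $X$.} Your handling of it is wrong, and it sits at the crux. The paper's statement uses the multiplier $\omega^{-\epsilon}_m=X\widetilde{\omega}^{-\epsilon}_m$. With that multiplier, both of your key claims fail.
\begin{itemize}
\item \emph{Injectivity fails.} The map $M/\omega^{\epsilon}_m\to M/\omega_m$, $x\mapsto\omega^{-\epsilon}_m x$, is not injective. Since $\omega_m=X\widetilde{\omega}^{+}_m\widetilde{\omega}^{-}_m$ and $M$ is free, its kernel is $\widetilde{\omega}^{\epsilon}_mM/\omega^{\epsilon}_mM\simeq M/XM\neq 0$.
\item \emph{Existence fails.} $c_m\in XM+\omega_mM$ would force the image of $c_m$ in $M/XM\subseteq\mathrm{H}^1(K[n],T)$, namely $\mathrm{cores}_{K_m[n]/K[n]}(c_m)$, to vanish. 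Iterating the three-term relation identifies this image with $(-p)^{m/2}c_0$ for $m$ even, which is not zero in general.
\end{itemize}
So your remark that $M/\omega^{\epsilon}_m$ ``absorbs'' $X$ is false, and so is ``Equivalently, $c_m\in\omega^{-\epsilon}_m\cdot M/\omega_m$'' in Step 1. Your worries about the base cases are a symptom of the same problem.

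\textbf{The fix.} The correct normalization (Castella--Wan, Kobayashi) is $\widetilde{\omega}^{-\epsilon}_m\cdot z^{\epsilon}_m[n]=c_m$; the printed $\omega^{-\epsilon}_m$ is a misprint. So is the range $2\le i$ in the definition of $\widetilde{\omega}^{-}_m$: you need $\Phi_1$ included to have $\omega_m=\omega^{\epsilon}_m\widetilde{\omega}^{-\epsilon}_m$. With these corrections, injectivity follows from that factorization and freeness. Moreover, your condition ``$c_m$ is killed by $\omega^{\epsilon}_m$ in $M/\omega_m$'' is then exactly equivalent to $c_m\in\widetilde{\omega}^{-\epsilon}_m(M/\omega_m)$.

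\textbf{The corrected argument.} No ``$\Phi_m(1+X)\equiv p$'' congruences are needed, and the proof is short.
\begin{itemize}
\item The base cases are vacuous: $\widetilde{\omega}^{-}_0=\widetilde{\omega}^{+}_1=1$, $\omega^{+}_0=\omega_0$ and $\omega^{-}_1=\omega_1$, so $z^+_0=c_0$ and $z^-_1=c_1$.
\item For $m\ge 2$, Shapiro's lemma turns corestriction into reduction $M/\omega_m\to M/\omega_{m-1}$. It turns restriction into multiplication by $\Phi_{m-1}(1+X)$ from $M/\omega_{m-2}$ to $M/\omega_{m-1}$.
\item Hence the three-term relation reads $c_m\equiv-\Phi_{m-1}(1+X)\,c_{m-2}\pmod{\omega_{m-1}}$. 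This involves $\Phi_{m-1}(1+X)$ itself, not the ``$\Phi_{m-1}(1+X)^{-1}$'' of your Step 3, which does not exist in $\Lambda$.
\item Use the two identities $\Phi_{m-1}(1+X)\,\widetilde{\omega}^{-\epsilon}_{m-2}=\widetilde{\omega}^{-\epsilon}_m$ and $\omega_{m-1}=\widetilde{\omega}^{-\epsilon}_m\,\omega^{\epsilon}_{m-2}$. By induction they give $c_m\in\widetilde{\omega}^{-\epsilon}_mM+\omega_mM$, hence existence.
\item The same identities give $\widetilde{\omega}^{-\epsilon}_m\bigl(z^{\epsilon}_m+z^{\epsilon}_{m-2}\bigr)\in\widetilde{\omega}^{-\epsilon}_m\,\omega^{\epsilon}_{m-2}M$. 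Cancelling the non-zero-divisor $\widetilde{\omega}^{-\epsilon}_m$ yields $z^{\epsilon}_m\equiv-z^{\epsilon}_{m-2}\pmod{\omega^{\epsilon}_{m-2}}$.
\item The sign twists $(-1)^{m/2}$ and $(-1)^{(m+1)/2}$ then remove the $-1$, as you say.
\end{itemize}
The $\delta$ and $\Delta$ bookkeeping you flag enters only in checking that the corestricted classes $c_m$ satisfy the three-term relation. That check is a base-change of corestriction along $K[np^{m-1-\delta}]\subseteq K[np^{m-\delta}]$ versus $K_{m-2}[n]\subseteq K_{m-1}[n]$.
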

\begin{proof}
See \cite[Prop. 4.4]{castella-wan-perrin-riou-ss}.
\end{proof}
\begin{defn}
For each sign $\epsilon \in \lbrace \pm  \rbrace$ and a positive integer $n$ prime to $Np$,
the \textbf{$\epsilon$-Iwasawa--Heegner cohomology class
$$z^{\epsilon}_{\infty}[n] \in \mathrm{H}^1_{\mathrm{Iw}}(K[n], T) \simeq \mathrm{H}^1(K[n], T \otimes \Lambda)$$
 of conductor $n$} is defined by
$$ z^{\epsilon}_{\infty}[n] := \varprojlim_m z^{\epsilon}_m[n] \in \varprojlim_m \mathrm{H}^1(K[n], T \otimes \Lambda) / \omega^{\epsilon}_m(X)\mathrm{H}^1(K[n], T \otimes \Lambda)
 \simeq \mathrm{H}^1(K[n], T \otimes \Lambda)$$
 where $m$ runs over positive integers with $\epsilon = \mathrm{sign}((-1)^m)$.
\end{defn}
The well-definedness of $z^{\epsilon}_{\infty}[n]$ follows from that the fact $\omega^{\epsilon}_m(X)$ with $\epsilon = \mathrm{sign}((-1)^m)$ forms a basis for the topology of $\Lambda$.

\subsubsection{The signed Heegner point main conjecture}
Let $z^{\pm}_\infty \in \mathrm{H}^1(K, T \otimes \Lambda)$ be the image of $z^{\pm}_\infty[1]$ under the corestriction map
$\mathrm{H}^1(K[1], T \otimes \Lambda) \to \mathrm{H}^1(K, T \otimes \Lambda)$
and it actually lies in
$\widehat{\mathrm{Sel}}_{\pm}(K_\infty, T)$ thanks to \cite[Lem. 4.7]{castella-wan-perrin-riou-ss}.

In \cite[Conj. 4.8]{castella-wan-perrin-riou-ss}, Castella--Wan formulated the supersingular analogue of Perrin-Riou's Heegner point main conjecture \cite{perrin-riou-heegner}, in particular,  Equality (\ref{eqn:heegner-pt-main-conj-signed}), as follows:
\begin{conj}[Signed Heegner point main conjectures] \label{conj:heegner-pt-main-conj-signed}
Let $E$ be an elliptic curve of conductor $N$ over $\mathbb{Q}$ and $p \geq 5$ be a good supersingular reduction prime for $E$.
Let $K$ be an imaginary quadratic field of odd discriminant $-D_K < -4$ such that $N^-$ is square-free, $\nu(N^-)$ is even, and $p$ splits in $K/\mathbb{Q}$.
Then the following statements hold:
\begin{enumerate}
\item The signed Iwasawa--Heegner cohomology class $z^{\pm}_\infty$ is not $\Lambda$-torsion.
\item Both $\widehat{\mathrm{Sel}}_{\pm}(K_\infty, T)$ and $\mathrm{Sel}^{\pm}( K_\infty, E[p^\infty])^\vee$ 
are $\Lambda$-modules of rank one.
\item there exists a finitely generated torsion $\Lambda$-module $M^{\pm}_\infty$ such that
\begin{enumerate}
\item 
$\mathrm{Sel}^{\pm}( K_\infty, E[p^\infty])^\vee \sim \Lambda \oplus M^{\pm}_{\infty} \oplus M^{\pm}_{\infty}$, and
\item \begin{equation} \label{eqn:heegner-pt-main-conj-signed}
\mathrm{char}_{\Lambda} \left(  \dfrac{\widehat{\mathrm{Sel}}_{\pm}(K_\infty, T)}{\kappa^{\mathrm{Hg}, \pm, \infty}_1 } \right) = \mathrm{char}_{\Lambda} \left( M^{\pm}_\infty \right)
\end{equation}
 in $\Lambda$,
\end{enumerate}
where $\sim$ means a pseudo-isomorphism of  (finitely generated torsion) Iwasawa modules.
\end{enumerate}
\end{conj}
\begin{thm}[Castella--Wan] \label{thm:heegner-pt-main-conj-signed}
Let $E$ be a semi-stable elliptic curve of conductor $N$ over $\mathbb{Q}$ and $p \geq 5$ be a good supersingular reduction prime for $E$.
Under the choice of $K$ in \S\ref{subsubseec:choice-K}, the following statements hold:
\begin{enumerate}
\item The signed Iwasawa--Heegner cohomology class $z^{\pm}_\infty$ is not $\Lambda$-torsion.
\item Both $\widehat{\mathrm{Sel}}_{\pm}(K_\infty, T)$ and $\mathrm{Sel}^{\pm}( K_\infty, E[p^\infty])^\vee$ 
are $\Lambda$-modules of rank one.
\item There exists a finitely generated torsion $\Lambda$-module $M^{\pm}_\infty$ such that
\begin{enumerate}
\item 
$\mathrm{Sel}^{\pm}( K_\infty, E[p^\infty])^\vee \sim \Lambda \oplus M^{\pm}_{\infty} \oplus M^{\pm}_{\infty}$, and
\item 
$\mathrm{char}_{\Lambda} \left(  \dfrac{\widehat{\mathrm{Sel}}_{\pm}(K_\infty, T)}{\kappa^{\mathrm{Hg}, \pm, \infty}_1 } \right) = \mathrm{char}_{\Lambda} \left( M^{\pm}_\infty \right)
$
 in $\Lambda[1/p]$.
\end{enumerate}
If further $E[p]$ is ramified at every prime dividing $N^-$, then the equality in (b) holds in $\Lambda$.
\end{enumerate}
\end{thm}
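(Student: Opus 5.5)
Since this is the theorem of Castella--Wan, the proof essentially assembles \cite[Thm. A.5, Thm. 6.11]{castella-wan-perrin-riou-ss} together with \cite{castella-liu-wan}. I would organize it as a two-divisibility argument in the style of Perrin-Riou and Howard.

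\textbf{Step 1: non-triviality of $z^{\pm}_\infty$.} First I would establish (1). Here I would use the Cornut--Vatsal non-vanishing of Heegner points along the anticyclotomic tower on the Shimura curve $X_{N^+,N^-}$. The non-torsionness of $z[np^m]$ for $m \gg 0$ gives the non-vanishing of $\mathrm{cores}(z[p^{m+1-\delta}])$. Because $\omega^{-\epsilon}_m(X)\cdot z^{\epsilon}_m = \mathrm{cores}(z[p^{m+1-\delta}])$, this forces $z^{\epsilon}_m \not\equiv 0$ for infinitely many $m$ of the right parity. Since $\widehat{\mathrm{Sel}}_{\pm}(K_\infty,T)$ is torsion-free (using $\mathrm{H}^1_{\mathcal{I}w}$ free over $\Lambda$), this shows $z^{\pm}_\infty$ is non-torsion.

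\textbf{Step 2: the Kolyvagin-system divisibility.} Next, apply Howard's $\Lambda$-adic Kolyvagin system machinery \cite{howard-kolyvagin} to the Heegner Kolyvagin system $\ks^{\mathrm{Hg},\pm,\infty}$ built from the $z^{\pm}_\infty[n]$ via Kolyvagin derivatives. The required inputs are:
\begin{itemize}
\item the self-duality of the signed local conditions (Prop. 4.11 of \cite{bdkim-supersingular-parity}), so that Howard's hypothesis (H.4) holds;
\item the large image of $E[p]$;
\item the specialization control of Proposition \ref{prop:specializations-Selmer} at height one primes $\mathfrak{P}\notin\Sigma_\Lambda$.
\end{itemize}
These give (2), the structure (3a) $\mathrm{Sel}^{\pm}(K_\infty,E[p^\infty])^\vee\sim\Lambda\oplus M^\pm_\infty\oplus M^\pm_\infty$, and the divisibility $\mathrm{char}(M^\pm_\infty)\mid \mathrm{char}(\widehat{\mathrm{Sel}}_\pm/\kappa^{\mathrm{Hg},\pm,\infty}_1)$.

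\textbf{Step 3: the reverse divisibility.} For the opposite direction, I would pass through a signed BDP-type main conjecture. First, use an explicit reciprocity law (a signed Coleman-map computation at the two primes above $p$) relating $z^\pm_\infty$ to the signed anticyclotomic $p$-adic $L$-function. Then, by Poitou--Tate and the rank-one structure, translate the Heegner point main conjecture into the equivalent statement that the anticyclotomic $p$-adic $L$-function generates the characteristic ideal of a Greenberg-type Selmer group. That characteristic-ideal divisibility is supplied by the Eisenstein congruences on $\mathrm{GU}(3,1)$ of \cite{castella-liu-wan}. The hypothesis that $E^K$ has analytic rank $\le 1$ is used to control the constant terms and the $\mu$-part.

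\textbf{Main obstacle.} I expect the main obstacle to be this last input. The $\mathrm{GU}(3,1)$ argument only gives the divisibility up to powers of $p$, which is why the equality in (b) holds in $\Lambda[1/p]$. Upgrading to $\Lambda$ requires $\mu$-invariant control via level-raising/Ihara-type arguments on the definite side, and those need $E[p]$ ramified at the primes dividing $N^-$; this is the source of the final clause. Comparing periods (Shimura curve versus modular curve, and the Gross period) without losing $p$-adic units is the delicate point there.
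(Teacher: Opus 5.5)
Your outline of (2)--(3) matches the paper in substance. The paper only invokes \cite[Thm. C]{castella-wan-perrin-riou-ss}, i.e.\ Howard's $\Lambda$-adic Kolyvagin system bound plus the reverse divisibility obtained, through the explicit reciprocity law, from $\mathrm{GU}(3,1)$ Eisenstein congruences.

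The genuine difference is in (1). The paper deduces it from the non-triviality of $\mathscr{L}^{\mathrm{BDP}}_{\mathfrak{p}}$ \cite{hsieh-anti-rankin-selberg, burungale-non-triviality-shimura} together with the reciprocity law \cite[Thm. 6.2]{castella-wan-perrin-riou-ss}. You instead use Cornut--Vatsal, as in Darmon--Iovita's treatment of $\pm$-Heegner classes.

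That route is legitimate, but you must state the input correctly. Non-torsionness of $z[p^m]$ over $K[p^m]$ says nothing about $\mathrm{cores}(z[p^{m+1-\delta}])$, since a non-torsion point can have trivial trace. What you need is Cornut--Vatsal's non-vanishing of the trace down to $K_m$, equivalently of the $\chi$-parts for all but finitely many characters $\chi$ of $\mathrm{Gal}(K_\infty/K)$. You should also check that the version you cite covers $X_{N^+,N^-}$ with primes of $N^+$ ramified in $K$, which is forced here when $N^-=1$.

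Your route makes (1) independent of $p$-adic $L$-functions, but the gain is only local. The Poitou--Tate translation in your Step 3 into a Greenberg/BDP-type main conjecture is an equivalence only once the localization of $z^{\pm}_\infty$ at the prime above $p$ in the reciprocity law is non-torsion, i.e.\ once $\mathscr{L}^{\mathrm{BDP}}_{\mathfrak{p}} \neq 0$. That fact already gives (1) via the reciprocity law. So the Hsieh--Burungale non-vanishing is needed in your argument anyway, and the paper simply uses that one input for both jobs.

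Two of your side attributions are also off:
\begin{enumerate}
\item The condition that $E^K$ have analytic rank $\leq 1$ enters this paper through the choice of $K$ in the proof of \cite[Thm. 6.11]{castella-wan-perrin-riou-ss}. It is used, e.g., to ensure $r^+_p > r^-_p$ in Kolyvagin's theorems, not to control $\mu$.
\item $\mu$ is invisible in the $\Lambda[1/p]$ equality. That equality is all the paper needs, since for Kolyvagin's conjecture it specializes only at $X\Lambda \neq p\Lambda$. This is why the paper remarks that only the non-triviality, not the $\mu$-vanishing, of the BDP $L$-function is used.
\end{enumerate}
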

\begin{proof}
It is \cite[Thm. C]{castella-wan-perrin-riou-ss}.
More precisely, (1) follows from the non-triviality of the BDP $p$-adic $L$-function $\mathscr{L}^{\mathrm{BDP}}_{\mathfrak{p}}$ \cite{hsieh-anti-rankin-selberg, burungale-non-triviality-shimura}  
and the explicit reciprocity law relating $\mathscr{L}^{\mathrm{BDP}}_{\mathfrak{p}}$ to $z^{\pm}_\infty$ \cite[Thm. 6.2]{castella-wan-perrin-riou-ss}.
We use their non-triviality results only, not the vanishing of their $\mu$-invariants.
\end{proof}
See also \cite{wan-heegner, burungale-castella-kim, castella-wan-imc-derivatives, sweeting-kolyvagin} for the recent developments on Heegner point main conjectures.

\subsection{Heegner point Kolyvagin systems}
\subsubsection{Construction of signed Iwasawa--Kolyvagin cohomology classes} \label{subsubsec:construction-kolyvagin-classes}
Let $k \geq 1$ be an integer.
Let $\mathcal{P}_k = \mathcal{P}_k(E,p,K)$ be the set of mod $p^k$ Kolyvagin primes; in other words, $\ell \in \mathcal{P}_k$ if
 $\ell$ is coprime to $2NpD_K$,
 $\ell$ is inert in $K/\mathbb{Q}$,
 $\ell +1 \equiv 0 \pmod{p^k}$,  and $a_\ell(E) \equiv 0 \pmod{p^k}$.
Let $\mathcal{N}_k$ be the set of square-free products of the primes in $\mathcal{P}_k$.
For $\ell \in \mathcal{P}_1$, let $I_\ell = (\ell +1, 1 - a_\ell(E) + \ell ) \subseteq \mathbb{Z}_p$.
For $n \in \mathcal{N}_1$, let $I_n = \sum_{\ell \vert n} I_\ell  \subseteq \mathbb{Z}_p$.

For $n \in \mathcal{N}_1$, let $G_{n} = \mathrm{Gal}( K[n]/ K[1] )$ and $\mathcal{G}_n = \mathrm{Gal}( K[n]/ K )$.
For each $\ell \in \mathcal{N}_1$, $G_{\ell}$ is a cyclic group of order $\ell+1$ and choose a generator $\sigma_\ell$ of $G_\ell$.
Then $G_{n} = \prod_{\ell \vert n}  G_{\ell}$ for $n \in \mathcal{N}_1$.
Write $D_\ell = \sum^{\ell}_{i=1} i \cdot \sigma^i_\ell \in \mathbb{Z}[G_{\ell}]$
and
$D_n = \prod_{\ell \vert n} D_\ell \in \mathbb{Z}[G_{n}]$.
Then it is known that  
$$\sum_{\varsigma \in S} \varsigma \left( D_nz^{\pm}_{\infty}[n] \right) \pmod{I_n} \in \widehat{\mathrm{Sel}}_{\pm}(K_\infty[n], T/I_nT)^{\mathcal{G}_n}$$
where $S$ is a set of generators of $\mathrm{Gal}(K[np^\infty]/K) / \mathrm{Gal}(K[np^\infty]/K[1])$.

Let 
$\kappa^{\pm, \infty}_n \in \mathrm{H}^1_{\mathrm{Iw}}(K_\infty, T/I_nT)$
be the image of $ \sum_{\varsigma \in S} \varsigma \left( D_nz^{\pm}_{\infty}[n] \right) \pmod{I_n} $
under the inverse of the restriction isomorphism  
$ \mathrm{res}^{-1} : \mathrm{H}^1_{\mathrm{Iw}}(K_\infty[n], T/I_nT)^{\mathcal{G}_n} \to \mathrm{H}^1_{\mathrm{Iw}}(K_\infty, T/I_nT)$ \cite[Lem. A.1]{castella-wan-perrin-riou-ss}.
Although $\kappa^{\pm, \infty}_n$ is ramified at primes dividing $n$, we do not know whether 
$\kappa^{\pm, \infty}_n \in \widehat{\mathrm{Sel}}_{\pm,n}(K_\infty, T/I_nT)$  in general
where $\widehat{\mathrm{Sel}}_{\pm,n}$ means the $n$-transverse signed $\Lambda$-adic Selmer structure \cite{mazur-rubin-book, howard-kolyvagin}.
However, there exists a (minimal) integer  $d$ with $0 \leq d \leq \mathrm{ord}_p(\mathrm{Tam}(E/K))$ independent of $n$ such that
$$\kappa^{\mathrm{Hg}, \pm, \infty}_n := p^d \cdot \kappa^{\pm, \infty}_n \in \widehat{\mathrm{Sel}}_{\pm,n}(K_\infty, T/I_nT),$$
which we call the \textbf{signed Iwasawa--Kolyvagin cohomology class at $n$},
and the collection of $\kappa^{\mathrm{Hg}, \pm, \infty}_n$ forms a $\Lambda$-adic Heegner point Kolyvagin system
$\ks^{\mathrm{Hg}, \pm, \infty}$
for $(T \otimes \Lambda, \mathcal{F}^{\pm}_{\Lambda}, \mathcal{N}_1)$ so that
$\ks^{\mathrm{Hg}, \pm, \infty}$
 satisfies the Kolyvagin system relation
\begin{equation} \label{eqn:kolyvagin-sysetm-relation}
 \phi^{\mathrm{fs}}_{\ell} \circ \mathrm{loc}_{\ell} \left( \kappa^{\mathrm{Hg}, \pm, \infty}_n \right)
 = \mathrm{loc}_{\ell} \left( \kappa^{\mathrm{Hg}, \pm, \infty}_{\ell n} \right)
\end{equation}
for every $\ell n \in \mathcal{N}_1$ where
$\phi^{\mathrm{fs}}_{\ell} : \mathrm{H}^1_{f}(K_\ell , T \otimes \Lambda / I_{n\ell}T \otimes \Lambda) \to \mathrm{H}^1_{/f}(K_\ell , T \otimes \Lambda / I_{n\ell}T \otimes \Lambda)$ is (the $\Lambda$-adic version of) the finite-singular isomorphism \cite[Lem. A.3]{castella-wan-perrin-riou-ss}.
Here, $\mathrm{Tam}(E/K)$ means the product of local Tamagawa factors of $E$ over $K$.
See \cite[Lem. A.2, A.3]{castella-wan-perrin-riou-ss} for details.
\begin{rem}
It is known that $d = 0$ when Heegner points come from modular curves (the $N^-=1$ case) \cite{gross-kolyvagin},  \cite[Lem. 1.7.3]{howard-kolyvagin} or there is no Tamagawa defect at primes dividing $N^+$ \cite[Assu. 1.1.1]{zanarella-howard}.
In \cite{kim-gross-zagier}, the consideration of $d$ is unfortunately missing, but there is no harm in the main result therein. It would be more complicated to formulate the refined Kolyvagin's conjecture when $d \neq 0$.
\end{rem}
For a height one prime $\mathfrak{P} \neq p\Lambda$, the specialization of $\kappa^{\mathrm{Hg}, \pm, \infty}_n$ at $\mathfrak{P}$
$$\kappa^{\mathrm{Hg}, \pm, (\mathfrak{P})}_n \in \mathrm{Sel}_{\pm, n}(K, T_{\mathfrak{P}}/I_n T_{\mathfrak{P}}) $$ 
is defined by the image of $\kappa^{\mathrm{Hg}, \pm, \infty}_n$ under the natural map
$\widehat{\mathrm{Sel}}_{\pm,n}(K_\infty, T/I_nT) \to \mathrm{Sel}_{\pm,n}(K, T_{\mathfrak{P}}/I_n T_{\mathfrak{P}}) $. 
Then the collection of $\kappa^{\mathrm{Hg}, \pm, (\mathfrak{P})}_n$ forms the \textbf{Heegner point Kolyvagin system
$\ks^{\mathrm{Hg}, \pm, (\mathfrak{P})}$}
for $(T_{\mathfrak{P}}, \mathcal{F}^{\pm}_{\mathfrak{P}}, \mathcal{N}_1)$ and satisfies the Kolyvagin system relation as the specialization of (\ref{eqn:kolyvagin-sysetm-relation}).
\subsubsection{The numerical invariants}
The numerical invariants associated to $\ks^{\mathrm{Hg}, \pm, (\mathfrak{P})}$ are denoted by
\begin{align} \label{eqn:numerical-invariants}
\begin{split}
\mathrm{ord} (\ks^{\mathrm{Hg}, \pm, (\mathfrak{P})}) & = \mathrm{min} \left\lbrace \nu(n) : n \in \mathcal{N}_1, \kappa^{\mathrm{Hg}, \pm, (\mathfrak{P})}_n \neq 0 \right\rbrace , \\
\mathrm{ord}_{\mathfrak{m}_{\mathfrak{P}}}( \kappa^{\mathrm{Hg}, \pm, (\mathfrak{P})}_n ) & = \mathrm{max} \left\lbrace j : \kappa^{\mathrm{Hg}, \pm, (\mathfrak{P})}_n \in \mathfrak{m}^j_{\mathfrak{P}} \mathrm{Sel}_{\pm, n}(K, T_{\mathfrak{P}}/I_n T_{\mathfrak{P}})  \right\rbrace , \\
\partial^{(i)}( \ks^{\mathrm{Hg}, \pm, (\mathfrak{P})} ) & = \mathrm{min} \left\lbrace \mathrm{ord}_{\mathfrak{m}_{\mathfrak{P}}}( \kappa^{\mathrm{Hg}, \pm, (\mathfrak{P})}_n ) :  n \in \mathcal{N}_1 \textrm{ with } \nu(n) = i  \right\rbrace , \\
\partial^{(\infty)}( \ks^{\mathrm{Hg}, \pm, (\mathfrak{P})}  ) & = \mathrm{min} \left\lbrace \partial^{(i)}( \ks^{\mathrm{Hg}, \pm, (\mathfrak{P})}  )  : i \geq 0 \right\rbrace .
\end{split}
\end{align}
We also define
$$d(\ks^{\mathrm{Hg}, \pm, (\mathfrak{P})}) = \mathrm{min} \left\lbrace \mathrm{ord}_{\mathfrak{m}_{\mathfrak{P}}}(\kappa^{\mathrm{Hg}, \pm, (\mathfrak{P})}_n) : n \in \mathcal{N}_k , \mathrm{dim}_{S_{\mathfrak{P}}/\mathfrak{m}_{\mathfrak{P}}} \mathrm{Sel}_{\pm, n}(K, T_{\mathfrak{P}}/\mathfrak{m}_{\mathfrak{P}} T_{\mathfrak{P}}) = 1 \right\rbrace,$$
and by definition, we have
$\partial^{(\infty)}( \ks^{\mathrm{Hg}, \pm, (\mathfrak{P})}  ) \leq d(\ks^{\mathrm{Hg}, \pm, (\mathfrak{P})})$.

\subsubsection{Derived Heegner points} \label{subsubsec:derived-heegner-points}
We continue from \S\ref{subsubsec:review-heegner-points}.
A \textbf{derived Heegner point over $K[np^m]$} is defined by 
\begin{equation} \label{eqn:P_n}
P_{np^m} = \sum_{\varsigma \in S} \varsigma( D_n y_{np^m} )
\end{equation}
where $S$ is a set of generators of $\mathrm{Gal}(K[n]/K) / \mathrm{Gal}(K[n]/K[1])$.
Denote by $[P_n]$ the image of $P_n$ in $E(K[n])/I_nE(K[n])$.
Since $D_n y_n \pmod{I_n}$ is $G_n$-invariant and $[P_n]$ is independent of $S$,
$[P_n]$ lies in $(E(K[n])/I_nE(K[n]))^{\mathcal{G}_n}$.

Let $n \in \mathcal{N}_k$ and assume $p^k\mathbb{Z}_p = I_n \mathbb{Z}_p$ for convenience.
Consider the commutative diagram
\begin{equation} \label{eqn:kummer-map-Kn}
\begin{split}
\xymatrix@=1.2em{
& &\mathrm{H}^1(K[n]/K, E)[p^k] \ar@{^{(}->}[d]^-{\mathrm{inf}} \\
E(K) /p^k  \ar@{^{(}->}[r]^-{\delta} \ar@{^{(}->}[d] & \mathrm{H}^1(K, E[p^k]) \ar@{->>}[r] \ar[d]^-{\mathrm{res}}_-{\simeq} & \mathrm{H}^1(K, E)[p^k] \ar[d]^-{\mathrm{res}} \\
\left(  E(K[n]) / p^k \right)^{\mathcal{G}_n} \ar@{^{(}->}[r]^-{\delta_n} \ar@{->>}[d]^-{\overline{\delta}_n} &  \left(\mathrm{H}^1(K[n], E[p^k]) \right)^{\mathcal{G}_n} \ar[r] & \left(\mathrm{H}^1(K[n], E)[p^k] \right)^{\mathcal{G}_n} \\
\mathrm{H}^1(K[n]/K, E)[p^k]
}
\end{split}
\end{equation}
For $n \in \mathcal{N}_k$, $\kappa^{\mathrm{Hg}}_n$ can also be explicitly defined as 
\begin{equation*}
 \kappa^{\mathrm{Hg}}_n := \mathrm{res}^{-1} \left( \delta_n([P_n]) \right) \in \mathrm{H}^1(K, E[p^k]) 
\end{equation*}
and it is represented by
1-cocycle
$\sigma \mapsto  \left( P_n / p^k \right)^\sigma - P_n / p^k - \dfrac{1}{p^k} \left( P^\sigma_n - P_n \right)$
where $\sigma \in \mathrm{Gal}(\overline{K}/K)$.
We also observe that
$\overline{\delta}_n ([P_n])  \in \mathrm{H}^1(K[n]/K, E(K[n]))[p^k]$
is represented by
1-cocycle
$\overline{\sigma} \mapsto - \dfrac{1}{p^k} \left( P^{\overline{\sigma}}_n - P_n \right)$
where $\overline{\sigma} \in \mathrm{Gal}(K[n]/K)$.
See \cite[Lem. 4.1 and Cor. 4.2]{mccallum-kolyvagin} and \cite[pg. 400]{ciperiani-wiles} for example.

The following corollary is a simple observation but is essential for our purpose.
\begin{cor} \label{cor:unramified-everywhere}
Let $n \in \mathcal{N}_k$.
If $ \kappa^{\mathrm{Hg}}_{n}$ is non-trivial and $\nu(n) = \mathrm{ord} (\ks^{\mathrm{Hg}})$,
then $\kappa^{\mathrm{Hg}}_{n}$ is unramified everywhere, i.e. $\kappa^{\mathrm{Hg}}_{n} \in \mathrm{Sel}(K, E[I_n])$.
\end{cor}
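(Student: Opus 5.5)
By construction, $\kappa^{\mathrm{Hg}}_n$ lies in the $n$-transverse Selmer group $\mathrm{Sel}_n(K, E[I_n])$. So it already satisfies the usual finite local conditions away from $n$, and only the primes $\ell \mid n$ need attention. The plan is to show that $\mathrm{loc}_\ell(\kappa^{\mathrm{Hg}}_n)$ lies in the finite part $\mathrm{H}^1_f(K_\ell, E[I_n])$ for every such $\ell$. Since $\mathrm{H}^1_f(K_\ell,-)\cap \mathrm{H}^1_{\mathrm{tr}}(K_\ell,-)=0$, this will force $\mathrm{loc}_\ell(\kappa^{\mathrm{Hg}}_n)=0$. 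The class is then unramified at $\ell$ in the strong sense that it satisfies the relaxed-to-strict condition, and hence lies in $\mathrm{Sel}(K,E[I_n])$.

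Fix a prime $\ell \mid n$ and write $n = \ell m$. I would use the Kolyvagin system relation (\ref{eqn:kolyvagin-sysetm-relation}), specialised at the augmentation ideal and reduced modulo $I_n$. It gives
$$\mathrm{loc}_\ell(\kappa^{\mathrm{Hg}}_{n}) = \phi^{\mathrm{fs}}_\ell\bigl(\mathrm{loc}_\ell(\kappa^{\mathrm{Hg}}_{m})\bigr)$$
in $\mathrm{H}^1_{/f}(K_\ell, E[I_n])$, which is identified with $\mathrm{H}^1_{\mathrm{tr}}(K_\ell,E[I_n])$. Here $\kappa^{\mathrm{Hg}}_m$ is reduced modulo $I_n$, and $I_n \subseteq I_m$ since $n\in\mathcal N_k$ is assumed to satisfy $I_n=p^k\mathbb Z_p$. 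Because $\nu(m) = \nu(n)-1 < \mathrm{ord}(\ks^{\mathrm{Hg}})$, the minimality in the definition of $\mathrm{ord}(\ks^{\mathrm{Hg}})$ in (\ref{eqn:numerical-invariants}) gives $\kappa^{\mathrm{Hg}}_m = 0$. Hence the singular projection of $\mathrm{loc}_\ell(\kappa^{\mathrm{Hg}}_n)$ vanishes, i.e.\ $\mathrm{loc}_\ell(\kappa^{\mathrm{Hg}}_n)\in \mathrm{H}^1_f(K_\ell,E[I_n])$. This holds for every $\ell\mid n$, and together with the conditions away from $n$ it gives $\kappa^{\mathrm{Hg}}_n\in \mathrm{Sel}(K,E[I_n])$.

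The main obstacle is making sure the vanishing $\kappa^{\mathrm{Hg}}_m=0$ is meaningful at the correct level. The definition of $\mathrm{ord}$ uses classes mod $I_m$, and we need vanishing of the image mod $I_n$. Since $I_n\subseteq I_m$ and reduction is compatible with the system, vanishing mod $I_m$ implies vanishing of the reduction, so this is harmless. Under the standing identification of the augmentation specialization with classical Selmer groups (Proposition \ref{prop:specializations-Selmer}), I expect the argument to go through directly. A second subtlety is that the relation is stated in $\mathrm{H}^1_{/f}$ rather than in $\mathrm{H}^1_{\mathrm{tr}}$. The finite-singular decomposition $\mathrm{H}^1(K_\ell,E[I_n])=\mathrm{H}^1_f\oplus\mathrm{H}^1_{\mathrm{tr}}$, valid for $\ell\in\mathcal P_k$, resolves this. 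If a class lies in $\mathrm{H}^1_{\mathrm{tr}}$ and has trivial image in $\mathrm{H}^1_{/f}$, then it is zero.
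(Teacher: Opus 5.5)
Your argument is correct and is the paper's proof written out in full. The paper's one-line proof uses the same two steps: minimality of $\mathrm{ord}(\ks^{\mathrm{Hg}})$ forces $\kappa^{\mathrm{Hg}}_{n/\ell}=0$, and the Kolyvagin system relation specialized at the augmentation ideal then makes the singular part of $\mathrm{loc}_\ell(\kappa^{\mathrm{Hg}}_n)$ vanish for every $\ell\mid n$.

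One small slip: since $I_n=\sum_{\ell\mid n}I_\ell$, the correct inclusion is $I_{n/\ell}\subseteq I_n$, not $I_n\subseteq I_{n/\ell}$. It is this inclusion that gives the reduction $T/I_{n/\ell}T\to T/I_nT$ which your argument actually uses.
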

\begin{proof}
It immediately follows from the definition of $\mathrm{ord} (\ks^{\mathrm{Hg}})$ and the specialization of (\ref{eqn:kolyvagin-sysetm-relation}) to the augmentation ideal.
\end{proof}

\section{A proof of Kolyvagin's conjecture} \label{sec:kolyvagin-conjecture}
We prove Theorem \ref{thm:main}.
\subsection{The rigidity of Heegner point Kolyvagin systems}
For $n \in \mathcal{N}_k$,
let 
$$\lambda^{\pm, (k)}(n) =  \mathrm{length}(M^{\pm, (k)}(n))$$
where
$\mathrm{Sel}_{\pm,n}(K, T_{\mathfrak{P}} / \mathfrak{m}^{k}_{\mathfrak{P}} T_{\mathfrak{P}}) \simeq 
\left( S_{\mathfrak{P}} / \mathfrak{m}^{k}_{\mathfrak{P}} \right)^{\epsilon} \oplus M^{\pm, (k)}(n)^{\oplus 2} $
with $\epsilon \in \lbrace 0, 1 \rbrace$
and the last isomorphism follows from  \cite[Lem. 1.5.1]{howard-kolyvagin}. 
It should be noted that the argument for the ordinary setting  applies equally to the supersingular one since the signed Selmer structure is also self-dual.
The following rigidity result is essential for our purpose.
\begin{lem}[Zanarella] \label{lem:rigidity}
Let $\ks^{\pm} \in \KS(T_{\mathfrak{P}}, \mathcal{F}^{\pm}_{\mathfrak{P}})$ be a non-trivial Kolyvagin system. There exist constants 
$$d(\ks^{\pm}) \in \mathbb{Z}_{\geq 0} \cup \lbrace \infty \rbrace$$ 
such that
$\kappa^{(k)}_n  = \mathfrak{m}^{d(\ks^{\pm})+ \lambda^{\pm, (k)}(n)}_{\mathfrak{P}} \mathrm{Sel}_{\pm, n}(K, T_{\mathfrak{P}} / \mathfrak{m}^{k}_{\mathfrak{P}} T_{\mathfrak{P}}) $ for $n \in \mathcal{N}_{2k-1}$ and
 $d(\ks^{\pm})$ is independent of $k > 0$.
\end{lem}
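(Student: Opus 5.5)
This is the rigidity statement for Kolyvagin systems over a DVR with a self-dual Selmer structure, due to Zanarella in the ordinary modular-curve setting. I would transport his argument to $(T_{\mathfrak{P}}, \mathcal{F}^{\pm}_{\mathfrak{P}})$. The argument only uses formal properties of Selmer structures:
\begin{itemize}
\item self-duality of $\mathcal{F}^{\pm}_{\mathfrak{P}}$ under the local Tate pairing, recalled above via \cite[Prop. 4.11]{bdkim-supersingular-parity};
\item cartesianness of the local conditions along $T_{\mathfrak{P}}/\mathfrak{m}^k_{\mathfrak{P}} \hookrightarrow T_{\mathfrak{P}}/\mathfrak{m}^{k+1}_{\mathfrak{P}}$;
\item irreducibility of the residual representation, which holds since $E[p]$ has large image;
\item the Chebotarev supply of primes in $\mathcal{N}_{2k-1}$.
\end{itemize}
So the plan is to check these inputs for the signed local conditions at $p$ and then run Howard's and Zanarella's structure theory verbatim.

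First, fix $k$ and $n \in \mathcal{N}_{2k-1}$, and write the decomposition
\[ \mathrm{Sel}_{\pm,n}(K, T_{\mathfrak{P}}/\mathfrak{m}^k_{\mathfrak{P}}) \simeq (S_{\mathfrak{P}}/\mathfrak{m}^k_{\mathfrak{P}})^{\epsilon} \oplus M^{\pm,(k)}(n)^{\oplus 2} \]
from \cite[Lem. 1.5.1]{howard-kolyvagin}. The parity $\epsilon$ is constant in $n$ up to the shift by $\nu(n)$, because adding a Kolyvagin prime flips the parity. Next I would prove the key one-step comparison between $n$ and $n\ell$ for a Kolyvagin prime $\ell$ chosen by Chebotarev so that $\mathrm{loc}_\ell$ is nonzero on a prescribed vector. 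There are two cases.
\begin{itemize}
\item If $\mathrm{loc}_\ell$ is surjective on the free part, the core rank and the module $M$ transfer cleanly: $\lambda(n\ell) = \lambda(n) \pm (\text{length of the localized element})$.
\item Otherwise, a comparison of $\mathcal{F}_n$, $\mathcal{F}_{n\ell}$ and the relaxed/strict structures at $\ell$, via global duality (Poitou--Tate with self-dual conditions), controls the change of $M$.
\end{itemize}
Combining this with the Kolyvagin relation $\phi^{\mathrm{fs}}_\ell(\mathrm{loc}_\ell \kappa_n) = \mathrm{loc}_\ell \kappa_{n\ell}$, which identifies the valuations of $\mathrm{loc}_\ell\kappa_n$ and $\mathrm{loc}_\ell\kappa_{n\ell}$, yields
\[ \mathrm{ord}(\kappa_{n\ell}) - \lambda(n\ell) = \mathrm{ord}(\kappa_n) - \lambda(n) \]
whenever $\ell$ is chosen suitably.

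Second, I would connect any two $n, n'$ by a path of such good primes, as in Mazur--Rubin's "core vertex" graph argument. This shows that the quantity $\mathrm{ord}(\kappa_n) - \lambda(n)$ is constant; this constant is $d(\ks^\pm)$. Here $\kappa_n$ generates $\mathfrak{m}^{d+\lambda(n)}$ times the cyclic part. The statement that $\kappa_n$ \emph{equals} the submodule $\mathfrak{m}^{d+\lambda}\mathrm{Sel}$, rather than merely lying in it, uses that $\kappa_n$ lies in the free summand when $\epsilon=1$, and is zero when the exponent is at least $k$. The restriction $n \in \mathcal{N}_{2k-1}$ rather than $\mathcal{N}_k$ gives the extra room needed so that the finite-singular isomorphism and Chebotarev choices work mod $\mathfrak{m}^k$ after absorbing the possible ramification index of $S_{\mathfrak{P}}$ over $\mathbb{Z}_p$. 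Independence of $k$ then follows from compatibility of $\kappa^{(k+1)}_n \mapsto \kappa^{(k)}_n$ and of the decompositions under reduction.

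The main obstacle is the path-connectedness step: showing that one can always pass between arbitrary $n$ and $n'$ through primes for which the one-step comparison is an equality rather than an inequality. This requires the Chebotarev argument to realize simultaneously prescribed localizations of several Selmer classes and dual Selmer classes, with the signed local condition at $p$ not interfering. I would handle it as Zanarella does, by first reducing to "core" $n$ where $\lambda(n)=0$, and then using the non-triviality of $\ks^\pm$ to fix a finite $d$.
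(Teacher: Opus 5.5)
Your plan matches the paper's. The paper gives no argument of its own: it invokes \cite[Lem.~2.3.1]{zanarella-howard} and notes that Zanarella's ordinary-case proof carries over because $\mathcal{F}^{\pm}_{\mathfrak P}$ is self-dual. In line with your reduction to core vertices, it also records that $d(\ks^{\pm})$ is the minimum of $\mathrm{ord}_{\mathfrak m_{\mathfrak P}}(\kappa_n)$ over $n$ with $\dim\mathrm{Sel}_{\pm,n}(K,T_{\mathfrak P}/\mathfrak m_{\mathfrak P}T_{\mathfrak P})=1$. Deferring the connectivity/equality step to Zanarella is therefore exactly what the paper does.

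However, one structural claim in your sketch is false, and it matters: adding a Kolyvagin prime does \emph{not} flip $\epsilon$.
\begin{itemize}
\item For $\ell\in\mathcal P_1$, inert in $K$, $\mathrm{Frob}_\lambda=\mathrm{Frob}_\ell^2$ acts trivially on $\overline T=T_{\mathfrak P}/\mathfrak m_{\mathfrak P}T_{\mathfrak P}$. Hence $\mathrm{H}^1(K_\lambda,\overline T)$ is $4$-dimensional, and $\mathrm{H}^1_f$, $\mathrm{H}^1_{\mathrm{tr}}$ are complementary $2$-dimensional Lagrangians.
\item The parity lemma for self-dual structures then gives $\dim\mathrm{Sel}_{\pm,n\ell}(K,\overline T)-\dim\mathrm{Sel}_{\pm,n}(K,\overline T)\in\{-2,0,2\}$.
\item So $\epsilon$ is independent of $n$, and it equals $1$ for a non-trivial system. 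This is why the free summand $S_{\mathfrak P}/\mathfrak m^k$ appears for every $n\in\mathcal N_{2k-1}$ in the proof of Theorem~\ref{thm:non-triviality-heegner}.
\end{itemize}
The flip you are thinking of is Kolyvagin's, on the complex-conjugation eigenspaces at $\mathfrak P=X\Lambda$. There the flips of the two eigenspaces cancel in the total, and for general $\mathfrak P$ no such splitting exists.

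With an alternating $\epsilon$ the statement would contradict itself. If $\epsilon(n)=0$, then $\mathfrak m^{d+\lambda(n)}\mathrm{Sel}_{\pm,n}=0$ because $\mathfrak m^{\lambda(n)}$ kills $M(n)$, which forces $\kappa_n=0$. Yet from any $\kappa_n\neq0$, a Chebotarev prime $\ell$ with $\mathrm{loc}_\ell\kappa_n\neq0$ yields $\kappa_{n\ell}\neq0$ via $\phi^{\mathrm{fs}}_\ell$.

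Your one-step analysis must therefore be set up for the rank-two target $\mathrm{H}^1_f(K_\lambda,\cdot)$. The descent primes are those for which $\mathrm{Sel}_{\pm,n}(K,\overline T)\to\mathrm{H}^1_f(K_\lambda,\overline T)$ is surjective; then the mod-$\mathfrak m$ dimension drops by exactly $2$ while the free summand persists.

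Two smaller corrections:
\begin{itemize}
\item ``Equals rather than lies in'' comes from the cyclicity of $\mathfrak m^{d+\lambda(n)}\mathrm{Sel}_{\pm,n}$ together with $\kappa_n$ having exact $\mathfrak m$-adic order $d+\lambda(n)$. It does not come from $\kappa_n$ lying in a (non-canonical) free summand.
\item Ramification of $S_{\mathfrak P}$ over $\mathbb Z_p$ cannot be the reason for $\mathcal N_{2k-1}$. Already $\ell\in\mathcal P_k$ gives $I_\ell\subseteq p^kS_{\mathfrak P}\subseteq\mathfrak m^k_{\mathfrak P}$, so $\phi^{\mathrm{fs}}_\ell$ modulo $\mathfrak m^k_{\mathfrak P}$ needs no extra depth. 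The deeper level is a requirement of Zanarella's rigidity argument itself, not of the set-up.
\end{itemize}
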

\begin{proof}
This is \cite[Lem. 2.3.1]{zanarella-howard}, which strengthens \cite[Lem. 1.6.4]{howard-kolyvagin}.
In its proof, it is also shown that
$d(\ks^{\pm}) = \mathrm{min} \left\lbrace \mathrm{ord}_{\mathfrak{m}_{\mathfrak{P}}}(\kappa^{\pm}_n) : n \in \mathcal{N}_k , \mathrm{dim}_{S_{\mathfrak{P}}/\mathfrak{m}_{\mathfrak{P}}} \mathrm{Sel}_{\pm, n}(K, T_{\mathfrak{P}}/\mathfrak{m}_{\mathfrak{P}} T_{\mathfrak{P}}) = 1 \right\rbrace $.
\end{proof}
\begin{thm}[Howard, Zanarella] \label{thm:exact-bound}
Let $\ks^{\pm} \in \KS(T_{\mathfrak{P}}, \mathcal{F}^{\pm}_{\mathfrak{P}})$ be a Kolyvagin system with
$\kappa^{\pm}_1 \neq 0 $.
Then $\mathrm{Sel}_{\pm}(K, T_{\mathfrak{P}})$ is a free $S_{\mathfrak{P}}$-module of rank one, and there exists a finite $S_{\mathfrak{P}}$-module $M^{\pm}_{\mathfrak{P}}$
such that
\[
\xymatrix{
\mathrm{Sel}^{\pm}(K, (T_{\mathfrak{P}})^* )  \simeq 
\Psi_{\mathfrak{P}} / S_{\mathfrak{P}} \oplus M^{\pm, \oplus 2}_{\mathfrak{P}}  , &
\mathrm{length}_{ S_{\mathfrak{P}} } (M^{\pm}_{\mathfrak{P}})  = \mathrm{length}_{ S_{\mathfrak{P}} } \left( \frac{ \mathrm{Sel}_{\pm}(K, T_{\mathfrak{P}} ) }{ \kappa^{\pm}_1 } \right) - d(\ks^{\pm}) .
}
\]
\end{thm}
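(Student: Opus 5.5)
This follows Howard's original argument for the structure theorem \cite[Thm. 1.6.1]{howard-kolyvagin}, with the exact error term supplied by the rigidity Lemma \ref{lem:rigidity}. Since $S_{\mathfrak{P}}$ is a DVR and the signed Selmer structure $\mathcal{F}^{\pm}_{\mathfrak{P}}$ is self-dual and cartesian (see the discussion after Proposition \ref{prop:specializations-Selmer}), the formal Kolyvagin system machinery of \cite[\S1]{howard-kolyvagin} applies verbatim.

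\textbf{Structure of the Selmer groups.} First I pass to finite level $k$. By \cite[Lem. 1.5.1]{howard-kolyvagin},
$$\mathrm{Sel}_{\pm}(K, T_{\mathfrak{P}}/\mathfrak{m}^k_{\mathfrak{P}}T_{\mathfrak{P}}) \simeq (S_{\mathfrak{P}}/\mathfrak{m}^k_{\mathfrak{P}})^{\epsilon} \oplus M^{\pm,(k)}(1)^{\oplus 2}.$$
The hypothesis $\kappa^{\pm}_1 \neq 0$ forces $\epsilon = 1$ for $k \gg 0$. Indeed, by Lemma \ref{lem:rigidity} with $n = 1$, the reduction $\kappa^{(k)}_1$ generates $\mathfrak{m}^{d+\lambda^{\pm,(k)}(1)}_{\mathfrak{P}}$ times the Selmer group. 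If $\epsilon = 0$, this image would be killed by $\mathfrak{m}^{\lambda}$ with $\lambda$ bounded, contradicting $\kappa_1 \neq 0$ once $k$ is large; this also forces $d := d(\ks^{\pm}) < \infty$.

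Next I pass to the limit. Taking $\varinjlim_k$ on the dual side gives
$$\mathrm{Sel}^{\pm}(K,(T_{\mathfrak{P}})^*) \simeq \Psi_{\mathfrak{P}}/S_{\mathfrak{P}} \oplus M^{\pm,\oplus 2}_{\mathfrak{P}},$$
where $M^{\pm}_{\mathfrak{P}}$ is the stable value of $M^{\pm,(k)}(1)$. This value stabilises because its length is bounded by the divisibility of $\kappa_1$. Taking $\varprojlim_k$ shows that $\mathrm{Sel}_{\pm}(K,T_{\mathfrak{P}})$ is free of rank one. For this step I use that $H^0(K, W_{\mathfrak{P}}) = 0$, which follows from the large image of $E[p]$, so that the mod $\mathfrak{m}^k$ Selmer groups are the $\mathfrak{m}^k$-torsion of the divisible version.

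\textbf{Length formula.} Fix $k$ larger than $d + \mathrm{length}(M^{\pm}_{\mathfrak{P}}) + 1$. Then
$$\mathrm{Sel}_{\pm}(K,T_{\mathfrak{P}}/\mathfrak{m}^k) \simeq S/\mathfrak{m}^k \oplus M^{\oplus 2}.$$
Lemma \ref{lem:rigidity} says that the image of $\kappa_1$ is $\mathfrak{m}^{d+\lambda}$ times this module, with $\lambda = \mathrm{length}(M)$. On the free summand this means the image of $\kappa_1$ is a generator multiplied by a uniformizer to the power $d+\lambda$, since $\mathfrak{m}^{d+\lambda}$ kills $M$.

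Comparing with the free module $\mathrm{Sel}_{\pm}(K,T_{\mathfrak{P}}) = S\cdot g$, write $\kappa_1 = \varpi^{t} g$ with
$$t = \mathrm{length}\bigl(\mathrm{Sel}_{\pm}(K,T_{\mathfrak{P}})/\kappa_1\bigr).$$
The reduction map $\mathrm{Sel}_{\pm}(K,T_{\mathfrak{P}})/\mathfrak{m}^k \hookrightarrow \mathrm{Sel}_{\pm}(K,T_{\mathfrak{P}}/\mathfrak{m}^k)$ is injective and lands in a direct summand isomorphic to $S/\mathfrak{m}^k$, using once more the vanishing of $H^0$. Hence $t = d + \lambda$, which gives the stated formula $\mathrm{length}(M^{\pm}_{\mathfrak{P}}) = t - d(\ks^{\pm})$.

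\textbf{Main obstacle.} The delicate point is this last comparison. The image of the integral Selmer group in the mod $\mathfrak{m}^k$ Selmer group must hit the free summand in such a way that divisibility of $\kappa_1$ in $\mathrm{Sel}_{\pm}(K,T_{\mathfrak{P}})$ agrees with its divisibility in $\mathrm{Sel}_{\pm}(K,T_{\mathfrak{P}}/\mathfrak{m}^k)$. Equivalently, $\mathrm{ord}_{\mathfrak{m}}(\kappa_1^{(k)})$ should be computed inside the whole finite module rather than just the summand. I would handle this as Howard does: since $M$ is killed by $\mathfrak{m}^{\lambda}$ and $k \gg \lambda$, an element of order $\geq k - \lambda$ must have a nonzero free component. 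Divisibility is then read off that component, so the discrepancy disappears for large $k$.
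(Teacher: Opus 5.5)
Your proposal is correct and follows the paper's route: the paper's proof is just a citation of Howard's Theorem 1.6.1 together with Zanarella's Theorem 2.3.6, and you reconstruct that combination, namely Howard's decomposition $\mathrm{Sel}_{\pm}(K,T_{\mathfrak{P}}/\mathfrak{m}^k_{\mathfrak{P}}T_{\mathfrak{P}})\simeq (S_{\mathfrak{P}}/\mathfrak{m}^k_{\mathfrak{P}})^{\epsilon}\oplus M^{\pm,(k)}(1)^{\oplus 2}$ plus Lemma \ref{lem:rigidity} at $n=1$. Two points to tidy: corank one already follows from $d(\ks^{\pm})+\lambda^{\pm,(k)}(1)<k$, which Lemma \ref{lem:rigidity} gives once $\kappa^{(k)}_1\neq 0$; and for the final comparison $t=d(\ks^{\pm})+\lambda$, the right justification is the one you state first, that the image of $\mathrm{Sel}_{\pm}(K,T_{\mathfrak{P}})/\mathfrak{m}^k_{\mathfrak{P}}$ is the direct summand $\mathrm{Sel}^{\pm}(K,(T_{\mathfrak{P}})^*)_{\mathrm{div}}[\mathfrak{m}^k_{\mathfrak{P}}]$, not the ``nonzero free component'' heuristic, which by itself does not control divisibility.
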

\begin{proof}
This follows from \cite[Thm. 1.6.1]{howard-kolyvagin} and \cite[Thm. 2.3.6]{zanarella-howard}.
\end{proof}

\subsection{The proof of Kolyvagin's conjecture}
We are now ready to describe a precise connection between the signed Heegner point main conjecture and Kolyvagin's conjecture. This is the supersingular variant of \cite[Thm. 6.5]{kim-gross-zagier}.
\begin{thm} \label{thm:non-triviality-heegner}
We keep all the assumptions in $\S$\ref{subsec:working-hypotheses}.
Let $\mathfrak{P}$ be a height one prime of $\Lambda$.
Then the following statements are equivalent.
\begin{enumerate}
\item  $ \ks^{\mathrm{Hg}, \pm, \infty}$ is non-trivial modulo $\mathfrak{P}$.
\item $\mathrm{ord}_{\mathfrak{P}} \left( \mathrm{char}_{\Lambda} \left(  \dfrac{\widehat{\mathrm{Sel}}_{\pm}(K_\infty, T)}{ \kappa^{\mathrm{Hg}, \pm, \infty}_1 } \right) \right) = \mathrm{ord}_{\mathfrak{P}} \left( \mathrm{char}_{\Lambda} \left( M^{\pm}_{\infty} \right) \right)$.
\end{enumerate}
\end{thm}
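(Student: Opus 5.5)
We will follow Mazur--Rubin's approximation argument, as in the ordinary case \cite[Thm. 6.5]{kim-gross-zagier}, now using the self-duality of the signed local conditions at $p$. First we reduce to primes $\mathfrak{P}$ outside a finite exceptional set. Both sides of (2) depend only on $\mathrm{ord}_{\mathfrak{P}}$ of characteristic ideals. Following \cite[\S5.3]{mazur-rubin-book}, we replace $\mathfrak{P}$ by nearby height one primes $\mathfrak{Q} = (f + p^{N})$, where $\mathfrak{P} = (f)$ with $f$ distinguished and $N \gg 0$. For such $\mathfrak{Q}$ we have $S_{\mathfrak{Q}} = \Lambda/\mathfrak{Q}$ and $\mathfrak{Q} \notin \Sigma_\Lambda$. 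Moreover, $\mathrm{ord}_{\mathfrak{m}_{\mathfrak{Q}}}$ of the specializations of the two characteristic ideals equals $\mathrm{ord}_{\mathfrak{P}}$ of them times a growing factor plus a bounded error, and non-triviality of $\ks^{\mathrm{Hg},\pm,\infty}$ mod $\mathfrak{P}$ is equivalent to non-triviality mod such $\mathfrak{Q}$ up to the same bounded error. This lets us work with the specialized system $\ks^{\mathrm{Hg},\pm,(\mathfrak{Q})}$ on $(T_{\mathfrak{Q}}, \mathcal{F}^{\pm}_{\mathfrak{Q}})$.

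At such $\mathfrak{Q}$, Theorem \ref{thm:heegner-pt-main-conj-signed}(1) gives that $z^{\pm}_\infty$, hence $\kappa^{\mathrm{Hg},\pm,\infty}_1$, is non-torsion. So for all but finitely many $\mathfrak{Q}$ the specialization $\kappa^{\mathrm{Hg},\pm,(\mathfrak{Q})}_1$ is nonzero, and Theorem \ref{thm:exact-bound} applies:
$$\mathrm{length}(M^{\pm}_{\mathfrak{Q}}) = \mathrm{length}\bigl(\mathrm{Sel}_{\pm}(K,T_{\mathfrak{Q}})/\kappa^{\mathrm{Hg},\pm,(\mathfrak{Q})}_1\bigr) - d(\ks^{\mathrm{Hg},\pm,(\mathfrak{Q})}).$$
Proposition \ref{prop:specializations-Selmer} identifies the first length on the right with the $\mathfrak{Q}$-specialization of $\mathrm{char}_\Lambda(\widehat{\mathrm{Sel}}_\pm(K_\infty,T)/\kappa_1)$, up to error bounded independently of $N$. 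It also identifies $M^{\pm}_{\mathfrak{Q}}$, via the structure $\mathrm{Sel}^\pm(K_\infty,E[p^\infty])^\vee \sim \Lambda\oplus M^\pm_\infty\oplus M^\pm_\infty$ of Theorem \ref{thm:heegner-pt-main-conj-signed}(3a), with the specialization of $\mathrm{char}_\Lambda(M^\pm_\infty)$, again up to bounded error. Consequently, $d(\ks^{\mathrm{Hg},\pm,(\mathfrak{Q})})$ equals (growing factor)$\times$(difference of the $\mathfrak{P}$-orders in (2)) plus $O(1)$.

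The remaining link is between $d(\ks^{\mathrm{Hg},\pm,(\mathfrak{Q})})$ and (1), which goes through Lemma \ref{lem:rigidity}. The rigidity statement shows that $\ks$ mod $\mathfrak{m}_{\mathfrak{Q}}^j$ vanishes exactly when $j \le d(\ks^{\mathrm{Hg},\pm,(\mathfrak{Q})})$, that is, $d$ is the $\mathfrak{m}_{\mathfrak{Q}}$-divisibility of the whole system. Comparing $\mathfrak{m}_{\mathfrak{Q}}$-divisibility with $\mathfrak{P}$-divisibility of the $\Lambda$-adic system, using $\mathfrak{Q} \equiv \mathfrak{P} \bmod p^N$, gives two cases. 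If $\ks^{\mathrm{Hg},\pm,\infty}$ is nonzero mod $\mathfrak{P}$, then $d(\ks^{\mathrm{Hg},\pm,(\mathfrak{Q})})$ stays bounded as $N\to\infty$. If it is zero mod $\mathfrak{P}$, then $d$ grows at least linearly with the same growing factor. Hence (1) holds exactly when $d$ is bounded, which by the previous paragraph happens exactly when the orders in (2) agree. Note that the difference of orders is $\ge 0$ by Theorem \ref{thm:heegner-pt-main-conj-signed}(3b) (at least after inverting $p$; $\mathfrak{P}\neq p\Lambda$ anyway).

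The main obstacle is the control step: keeping the error terms uniformly bounded in $N$. This concerns the specialization maps of Proposition \ref{prop:specializations-Selmer}, applied to the Kolyvagin classes at all $n\in\mathcal{N}_{2k-1}$ and not just at $n=1$, together with the compatibility of $\ks^{\mathrm{Hg},\pm,\infty}$ mod $\mathfrak{P}$ versus mod $\mathfrak{Q}$. I would first try to copy the ordinary-case argument of \cite{kim-gross-zagier}, checking that the only inputs used there are self-duality (valid here by \cite{bdkim-supersingular-parity}) and the large image of $E[p]$. The other point to verify carefully is $\mathfrak{P}=X\Lambda$, where the signed Selmer groups specialize exactly, so no error term arises.
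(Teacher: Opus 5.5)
Your proposal follows essentially the same route as the paper. You approximate $\mathfrak{P}=(g)$ by $\mathfrak{P}_M=(g+p^M)$, compare specialized Selmer lengths with $\mathrm{ord}_{\mathfrak{P}}$ of the two characteristic ideals via the control maps, identify the discrepancy with the rigidity constant $d(\ks^{\mathrm{Hg},\pm,(\mathfrak{P}_M)})$ from Lemma \ref{lem:rigidity} and Theorem \ref{thm:exact-bound}, and use $\ks^{\mathrm{Hg},\pm,(\mathfrak{P})}\equiv\ks^{\mathrm{Hg},\pm,(\mathfrak{P}_M)} \pmod{p^M}$ to show this constant stays bounded exactly when $\ks^{\mathrm{Hg},\pm,\infty}\not\equiv 0 \bmod \mathfrak{P}$.

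The one real omission is $\mathfrak{P}=p\Lambda$, which the statement allows but your distinguished $f$ excludes; the paper treats it by the same argument with $\mathfrak{P}_M=(X^M+p)$. There your appeal to Theorem \ref{thm:heegner-pt-main-conj-signed}(3b) is unavailable, but nonnegativity of the difference already follows from $d\geq 0$. Also, $S_{\mathfrak{Q}}=\Lambda/\mathfrak{Q}$ need not hold, but you only need $\Lambda/\mathfrak{Q}\simeq\Lambda/\mathfrak{P}$ and the resulting bounded index.
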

\begin{proof}
Let $f_{\Lambda}$ be a generator of $\mathrm{char}_{\Lambda} \left(  \dfrac{\widehat{\mathrm{Sel}}_{\pm}(K_\infty, T)}{ \kappa^{\mathrm{Hg}, \pm, \infty}_1 }\right) $. 
We first suppose that $\mathfrak{P} \neq p\Lambda$ and write $\mathfrak{P} = (g)$ where $g \in \Lambda$.
Define $\mathfrak{P}_M  = (g+p^M)\Lambda$.
By taking $M$ sufficiently large, we may assume the following statements:
\begin{itemize}
\item $\mathfrak{P}_M$ is a height one prime ideal of  $\Lambda$.
\item $\Lambda / \mathfrak{P}_M \simeq \Lambda / \mathfrak{P}$ as rings.
\item $\mathfrak{P}_M$ and $f_{\Lambda}$ are relatively prime, so we have $\kappa^{\mathrm{Hg}, \pm, (\mathfrak{P}_M)}_1 \neq 0$.
\item Both natural maps
\[
\xymatrix{
\widehat{\mathrm{Sel}}_{\pm}(K_\infty, T)/  \mathfrak{P}_M \to \mathrm{Sel}_{\pm}(K, T_{\mathfrak{P}_M}),
& \mathrm{Sel}^{\pm}(K, ( T_{\mathfrak{P}_M} )^* )  \to \mathrm{Sel}^{\pm}(K_\infty, E[p^\infty])[\mathfrak{P}_M]
}
\]
have finite kernel and cokernel which are bounded by a constant depending only on $[S_{\mathfrak{P}_M} : \Lambda / \mathfrak{P}_M]$
but not on $\mathfrak{P}_M$ itself
where $S_{\mathfrak{P}_M}$ is the integral closure of $\Lambda / \mathfrak{P}_M$ \cite[Prop. 2.2.8]{howard-kolyvagin}.
\end{itemize}
Following the argument of \cite[Thm. 5.3.10]{mazur-rubin-book},
the equality 
$(\mathfrak{P}_M, \mathfrak{P}^n ) = 
(\mathfrak{P}_M, p^{Mn} )$
implies that
\begin{align*}
\mathrm{length}_{\mathbb{Z}_p} \left(  \mathrm{Sel}_{\pm}(K, T_{\mathfrak{P}_M} ) / S_{\mathfrak{P}_M} \kappa^{\mathrm{Hg}, \pm, (\mathfrak{P}_M)}_1 \right) & = 
\mathrm{length}_{\mathbb{Z}_p} \left(  \Lambda / (f_{\Lambda} , \mathfrak{P}_M \right) \\
& = \mathrm{length}_{\mathbb{Z}_p} \left(  \Lambda / (  \mathfrak{P}^{ \mathrm{ord}_{\mathfrak{P}} (  f_{\Lambda})}, \mathfrak{P}_M \right) \\
& = M \cdot \mathrm{rk}_{\mathbb{Z}_p}  ( \Lambda / \mathfrak{P}_M )  \cdot \mathrm{ord}_{\mathfrak{P}} (  f_{\Lambda} )
\end{align*}
up to $O(1)$ as $M$ varies. Here, $O(1)$ means that the differences are constant independent of $M$.
Similarly, we have
\begin{align*}
2 \cdot \mathrm{length}_{\mathbb{Z}_p} (M^{\pm}_{\mathfrak{P}_M}) & = \mathrm{length}_{\mathbb{Z}_p} \left( \mathrm{Sel}^{\pm}(K,  ( T_{\mathfrak{P}_M} )^* )_{/\mathrm{div}} \right) \\
& = \mathrm{length}_{\mathbb{Z}_p} \left(  ( \mathrm{Sel}^{\pm}(K_\infty,  E[p^\infty])^\vee / \mathfrak{P}_M )_{\mathbb{Z}_p\textrm{-}\mathrm{tors}} \right) \\
& = M \cdot \mathrm{rk}_{\mathbb{Z}_p}  ( \Lambda / \mathfrak{P}_M )  \cdot \mathrm{ord}_{\mathfrak{P}} \left( \mathrm{char}_{\Lambda} \left( ( \mathrm{Sel}^{\pm}( K_\infty, E[p^\infty])^\vee)_{\Lambda\textrm{-}\mathrm{tors}}  \right) \right)
\end{align*}
up to $O(1)$ as $M$ varies where $M^{\pm}_{\mathfrak{P}_M} = M^{\pm}_\infty \otimes_{\Lambda} S_{\mathfrak{P}_M}$.
We also have equality
\begin{align*}
\mathrm{length}_{S_{\mathfrak{P}_M}} \left(  \mathrm{Sel}_{\pm}(K, T_{\mathfrak{P}_M} ) / S_{\mathfrak{P}_M} \kappa^{\mathrm{Hg}, \pm, (\mathfrak{P}_M)}_1 \right) = \partial^{(0)} ( \ks^{\mathrm{Hg}, \pm , (\mathfrak{P}_M)} ) .
\end{align*}
Although we consider Kolyvagin systems at height one prime $\mathfrak{P}_M$, the corresponding Selmer structure is self-dual thanks to \cite[Lem. 2.1.1]{howard-kolyvagin}. 
Since $\kappa^{\mathrm{Hg}, \pm, (\mathfrak{P}_M)}_1 \neq 0$, we have
\begin{align*}
\mathrm{cork}_{ S_{\mathfrak{P}_M} }\mathrm{Sel}^{\pm}(K, (T_{\mathfrak{P}_M})^*) & = 1, \\
\mathrm{Sel}^{\pm}(K, (T_{\mathfrak{P}_M})^*)_{/\mathrm{div}} & \simeq \bigoplus_{i \geq 1} \left( S_{\mathfrak{P}_M} /  \mathfrak{m}^{a_i}_{ \mathfrak{P}_M }  S_{\mathfrak{P}_M} \right)^{\oplus 2}
\end{align*}
where $\mathfrak{m}_{ \mathfrak{P}_M }$ is the maximal ideal of $S_{\mathfrak{P}_M}$ and $a_1 \geq  a_2 \geq  \cdots$ thanks to \cite[Thm. 1.6.1]{howard-kolyvagin}.
In particular, for any $k \geq 1$ such that $\kappa^{\mathrm{Hg}, \pm, (\mathfrak{P}_M)}_1 \neq 0$ in $S_{\mathfrak{P}_M} /\mathfrak{m}^{k}_{ \mathfrak{P}_M} S_{\mathfrak{P}_M}$, we have
\begin{align*}
\mathrm{Sel}^{\pm}(K, (T_{\mathfrak{P}_M})^*[\mathfrak{m}^{k}_{ \mathfrak{P}_M}])  & \simeq  S_{\mathfrak{P}_M} /\mathfrak{m}^{k}_{ \mathfrak{P}_M} S_{\mathfrak{P}_M} \oplus M^{\pm, (k)}_{\mathfrak{P}_M} \oplus M^{\pm, (k)}_{\mathfrak{P}_M}  \\
& \simeq S_{\mathfrak{P}_M} /\mathfrak{m}^{k}_{ \mathfrak{P}_M} S_{\mathfrak{P}_M} \oplus \bigoplus_{i \geq 1} \left( S_{\mathfrak{P}_M} /  \mathfrak{m}^{a_i}_{ \mathfrak{P}_M }  S_{\mathfrak{P}_M} \right)^{\oplus 2}
\end{align*}
with $\mathrm{length}_{S_{\mathfrak{P}_M}} M^{\pm, (k)}_{\mathfrak{P}_M} < k$ following the proof of \cite[Thm. 1.6.1]{howard-kolyvagin}.
Denote by $$\ks^{\mathrm{Hg}, \pm, (\mathfrak{P}_M), 2k-1} = \left\lbrace \kappa^{\mathrm{Hg}, \pm, (\mathfrak{P}_M)}_n : n \in \mathcal{N}_{2k-1} \right\rbrace$$
 the Heegner point Kolyvagin system for $(T_{\mathfrak{P}_M}/ \mathfrak{m}^{k}_{ \mathfrak{P}_M} T_{\mathfrak{P}_M}, \mathcal{F}^{\pm}_{\mathfrak{P}_M}, \mathcal{N}_{2k-1})$.
By the rigidity of Heegner point Kolyvagin systems (Lemma \ref{lem:rigidity}), for $n \in \mathcal{N}_{2k-1}$, there exist unique integers $\delta^{\mathrm{Hg}, \pm}_{\mathfrak{P}_M}(k)$, independent of $n$, such that
\begin{equation} \label{eqn:rigidity-kolyvagin}
\left\langle \kappa^{\mathrm{Hg}, \pm, (\mathfrak{P}_M)}_n \right\rangle  = \mathfrak{m}^{\lambda^{(k)}(n) + \delta^{\mathrm{Hg}, \pm}_{\mathfrak{P}_M}(k) }_{ \mathfrak{P}_M}\mathrm{Sel}_{\mathcal{F}^{\pm}_{\mathfrak{P}_M}(n)}(K, T_{\mathfrak{P}_M}/ \mathfrak{m}^{k}_{ \mathfrak{P}_M} T_{\mathfrak{P}_M})
\end{equation}
where $\lambda^{(k)}(n) = \mathrm{length}_{ S_{\mathfrak{P}_M} }  M^{\pm,(k)}_{\mathfrak{P}_M}(n)$ and
$$\mathrm{Sel}_{\mathcal{F}^{\pm, *}_{\mathfrak{P}_M}(n)}(K, (T_{\mathfrak{P}_M})^*[\mathfrak{m}^{k}_{ S_{\mathfrak{P}_M}}])   \simeq  S_{\mathfrak{P}_M} /\mathfrak{m}^{k}_{ \mathfrak{P}_M} S_{\mathfrak{P}_M} \oplus M^{\pm, (k)}_{\mathfrak{P}_M}(n) \oplus M^{\pm, (k)}_{\mathfrak{P}_M}(n).$$
The restriction of Kolyvagin primes to $\mathcal{N}_{2k-1}$ is essential here and 
$$\delta^{\mathrm{Hg}, \pm}_{\mathfrak{P}_M}(k) = d(\ks^{\mathrm{Hg}, \pm, (\mathfrak{P}_M)})$$
 is also independent of $k$ (Theorem \ref{thm:exact-bound}). 
By taking $k$ sufficiently large, we have
\begin{align*}
\mathrm{length}_{S_{\mathfrak{P}_M}} M_{\mathfrak{P}_M} & = \sum_{i \geq 1} a^{\pm}_i \\
& = \partial^{(0)} ( \ks^{\mathrm{Hg}, \pm, (\mathfrak{P}_M)} ) - \delta^{\mathrm{Hg}, \pm}_{\mathfrak{P}_M}(k) .
\end{align*}
Combining all the above computations, we obtain equalities
\begin{align} \label{eqn:heegner-non-triviality-1}
\begin{split}
& 2 \cdot M \cdot \mathrm{rk}_{\mathbb{Z}_p}  ( S_{\mathfrak{P}_M} )  \cdot \mathrm{ord}_{\mathfrak{P}} (  f_{\Lambda} ) \\
& = 2 \cdot \mathrm{length}_{\mathbb{Z}_p} \left(  \mathrm{Sel}_{\pm}(K, T_{\mathfrak{P}_M} ) / S_{\mathfrak{P}_M} \kappa^{\mathrm{Hg}, \pm, (\mathfrak{P}_M)}_1 \right) \\ 
& =  2 \cdot\dfrac{\mathrm{rk}_{\mathbb{Z}_p}  ( S_{\mathfrak{P}_M} ) }{e(S_{\mathfrak{P}_M} / \mathbb{Z}_p)} \cdot  \mathrm{length}_{S_{\mathfrak{P}_M}} \left(  \mathrm{Sel}_{\pm}(K, T_{\mathfrak{P}_M} ) / S_{\mathfrak{P}_M} \kappa^{\mathrm{Hg}, \pm, (\mathfrak{P}_M)}_1 \right) \\
& = 2 \cdot\dfrac{\mathrm{rk}_{\mathbb{Z}_p}  ( S_{\mathfrak{P}_M} ) }{e(S_{\mathfrak{P}_M} / \mathbb{Z}_p)} \cdot \partial^{(0)} ( \ks^{\mathrm{Hg}, \pm, (\mathfrak{P}_M)} ) \\
& \geq  2 \cdot\dfrac{\mathrm{rk}_{\mathbb{Z}_p}  ( S_{\mathfrak{P}_M} ) }{e(S_{\mathfrak{P}_M} / \mathbb{Z}_p)} \cdot \left( \partial^{(0)} ( \ks^{\mathrm{Hg}, \pm, (\mathfrak{P}_M)} ) -  \delta^{\mathrm{Hg}, \pm}_{\mathfrak{P}_M}(k) \right) \\
& = \dfrac{\mathrm{rk}_{\mathbb{Z}_p}  ( S_{\mathfrak{P}_M} ) }{e(S_{\mathfrak{P}_M} / \mathbb{Z}_p)} \cdot  \mathrm{length}_{S_{\mathfrak{P}_M}} \left( \mathrm{Sel}^{\pm}(K, (T_{\mathfrak{P}_M})^*)_{/\mathrm{div}} \right)  \\
& =  \mathrm{length}_{\mathbb{Z}_p} \left( \mathrm{Sel}^{\pm}(K, (T_{\mathfrak{P}_M})^*)_{/\mathrm{div}} \right)
\end{split}
\end{align}
up to $O(1)$ as $M$ varies, and
\begin{align} \label{eqn:heegner-non-triviality-2}
\begin{split}
& \mathrm{length}_{\mathbb{Z}_p} \left( \mathrm{Sel}^{\pm}(K,  ( T_{\mathfrak{P}_M} )^* )_{/\mathrm{div}} \right) \\
& = M \cdot \mathrm{rk}_{\mathbb{Z}_p}  ( S_{\mathfrak{P}_M} )   \cdot \mathrm{ord}_{\mathfrak{P}} \left( \mathrm{char}_{\Lambda} \left( ( \mathrm{Sel}^{\pm}( K_\infty, E[p^\infty])^\vee)_{\Lambda\textrm{-tors}}  \right) \right)
\end{split}
\end{align}
up to $O(1)$ as $M$ varies again.

We prove (2) $\Rightarrow$ (1) first.
Suppose that
$$\mathrm{ord}_{\mathfrak{P}} \left( \mathrm{char}_{\Lambda} \left( ( \mathrm{Sel}^{\pm}( K_\infty, E[p^\infty])^\vee)_{\Lambda\textrm{-tors}}  \right) \right) 
=2  \cdot \mathrm{ord}_{\mathfrak{P}} (  f_{\Lambda} ) .$$
Combining (\ref{eqn:heegner-non-triviality-1}) with  (\ref{eqn:heegner-non-triviality-2}), 
the inequality above becomes an equality, so $\delta^{\mathrm{Hg}, \pm}_{\mathfrak{P}_M}(k)$ is also a constant  as $M$ varies.
Since 
\begin{equation} \label{eqn:congruences-p-M}
\ks^{\mathrm{Hg}, \pm, (\mathfrak{P})} \equiv \ks^{\mathrm{Hg}, \pm, (\mathfrak{P}_M)} \pmod{p^M}
\end{equation}
for every $M \geq 1$,
we obtain $\delta^{\mathrm{Hg}, \pm}_{\mathfrak{P}}(k) = \delta^{\mathrm{Hg}, \pm}_{\mathfrak{P}_M}(k) <\infty$  by taking $M > k$, so $ \ks^{\mathrm{Hg}, \pm, (\mathfrak{P})}$ is also non-trivial.

Now we prove (1) $\Rightarrow$ (2).
Suppose that $ \ks^{\mathrm{Hg}, \pm, (\mathfrak{P})}$ is non-trivial, so $ \delta^{\mathrm{Hg}, \pm}_{\mathfrak{P}}(k) < \infty$. 
By using the congruence (\ref{eqn:congruences-p-M}) again, $\delta^{\mathrm{Hg}, \pm}_{\mathfrak{P}_M}(k)$ is bounded as $M$ varies, so the inequality above becomes an equality.
Combining (\ref{eqn:heegner-non-triviality-1}) with  (\ref{eqn:heegner-non-triviality-2}), we obtain
$$\mathrm{ord}_{\mathfrak{P}} \left( \mathrm{char}_{\Lambda} \left( ( \mathrm{Sel}^{\pm}( K_\infty, E[p^\infty])^\vee)_{\Lambda\textrm{-tors}}  \right) \right) 
=2  \cdot \mathrm{ord}_{\mathfrak{P}} (  f_{\Lambda} ) .$$

When $\mathfrak{P} = p\Lambda$, the same argument works by taking $\mathfrak{P}_M = X^M + p$.
\end{proof}
We are now ready to obtain Theorem \ref{thm:main}.
For notational convenience, we focus only on the plus Selmer groups here.
Since we have $\ks^{\mathrm{Hg}, +, (X\Lambda)} = \ks^{\mathrm{Hg}}$ (Proposition \ref{prop:specializations-Selmer}), we get the conclusion.
\begin{cor} \label{cor:non-triviality-heegner}
Kolyvagin's conjecture holds under our working hypotheses in \S\ref{subsec:working-hypotheses}.
\end{cor}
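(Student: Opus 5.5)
The plan is to deduce the corollary from Theorem \ref{thm:non-triviality-heegner}, applied to the augmentation ideal $\mathfrak{P} = X\Lambda$ and the plus sign, with Theorem \ref{thm:heegner-pt-main-conj-signed} supplying condition (2). The $\Lambda$-adic system $\ks^{\mathrm{Hg}, +, \infty}$ specializes at $X\Lambda$ to $\ks^{\mathrm{Hg}, +, (X\Lambda)}$. By Proposition \ref{prop:specializations-Selmer}, the plus Selmer structure at $X\Lambda$ coincides with the classical Bloch--Kato/Selmer structure on $T$ over $K$, and the maps $(*)$ and $(**)$ are isomorphisms. Hence $\ks^{\mathrm{Hg}, +, (X\Lambda)} = \ks^{\mathrm{Hg}}$. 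So Kolyvagin's conjecture is exactly the statement that $\ks^{\mathrm{Hg}, +, \infty}$ is non-trivial modulo $X\Lambda$, which is condition (1) of Theorem \ref{thm:non-triviality-heegner}.

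To verify condition (2) at $\mathfrak{P} = X\Lambda$, I invoke Theorem \ref{thm:heegner-pt-main-conj-signed}(3)(b). It gives
\[
\mathrm{char}_{\Lambda}\bigl(\widehat{\mathrm{Sel}}_{+}(K_\infty,T)/\kappa^{\mathrm{Hg},+,\infty}_1\bigr) = \mathrm{char}_{\Lambda}(M^{+}_\infty)
\]
in $\Lambda[1/p]$. This equality in $\Lambda[1/p]$ determines $\mathrm{ord}_{\mathfrak{P}}$ of both sides for every height one prime $\mathfrak{P} \neq p\Lambda$, since localizing at such $\mathfrak{P}$ factors through inverting $p$. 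As $X\Lambda \neq p\Lambda$, the orders at $X\Lambda$ agree, so condition (2) of Theorem \ref{thm:non-triviality-heegner} holds. We therefore do not need the integral version of the main conjecture, nor the extra ramification hypothesis at primes dividing $N^-$.

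There are two points to check in invoking Theorem \ref{thm:non-triviality-heegner}. First, its proof uses $f_\Lambda \neq 0$, i.e. that the quotient $\widehat{\mathrm{Sel}}_{+}(K_\infty,T)/\kappa^{\mathrm{Hg},+,\infty}_1$ is $\Lambda$-torsion. This follows from Theorem \ref{thm:heegner-pt-main-conj-signed}(1) and (2): $z^+_\infty$ is non-torsion, $\widehat{\mathrm{Sel}}_{+}$ has rank one, and $\kappa^{\mathrm{Hg},+,\infty}_1$ is $p^d$ times the image of $z^+_\infty$. Second, the relation $\mathrm{char}(M^+_\infty)^2 = \mathrm{char}\bigl((\mathrm{Sel}^{+}(K_\infty,E[p^\infty])^\vee)_{\Lambda\textrm{-tors}}\bigr)$ is needed; it follows from Theorem \ref{thm:heegner-pt-main-conj-signed}(3)(a).

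The main subtlety, and the step I expect to require care, is the compatibility of the specialization at $X\Lambda$. The proof of Theorem \ref{thm:non-triviality-heegner} passes through the auxiliary primes $\mathfrak{P}_M = (X + p^M)$ and the congruence (\ref{eqn:congruences-p-M}). These $\mathfrak{P}_M$ must avoid the exceptional set $\Sigma_\Lambda$ of Proposition \ref{prop:specializations-Selmer}, which holds for $M \gg 0$ because $\Sigma_\Lambda$ is finite. The conclusion of that theorem then transfers to $X\Lambda$ itself, where Proposition \ref{prop:specializations-Selmer} gives exact isomorphisms rather than mere bounded discrepancies. With this, the non-triviality of $\ks^{\mathrm{Hg},+,(X\Lambda)} = \ks^{\mathrm{Hg}}$ follows, which is Kolyvagin's conjecture.
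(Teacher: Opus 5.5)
Your proposal is correct and follows the paper's argument. You apply Theorem \ref{thm:non-triviality-heegner} with the plus sign at $\mathfrak{P}=X\Lambda$, obtain its condition (2) from Theorem \ref{thm:heegner-pt-main-conj-signed}, and identify $\ks^{\mathrm{Hg},+,(X\Lambda)}$ with $\ks^{\mathrm{Hg}}$ via Proposition \ref{prop:specializations-Selmer}. Your observation is also correct, and the paper leaves it implicit: since $X\Lambda\neq p\Lambda$, the equality of characteristic ideals in $\Lambda[1/p]$ already determines $\mathrm{ord}_{X\Lambda}$, so neither the integral main conjecture nor the ramification condition at primes dividing $N^-$ is needed.
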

\begin{proof}
It follows from Theorems \ref{thm:heegner-pt-main-conj-signed} and \ref{thm:non-triviality-heegner}.
\end{proof}

\section{Annihilating Tate--Shafarevich groups via Heegner points and another local-global principle} \label{sec:applications}
We study the arithmetic applications of Kolyvagin's conjecture to elliptic curves of arbitrary rank
\cite{kolyvagin-selmer, jetchev-lauter-stein,  stein-generalizing-gross-zagier, wei-zhang-mazur-tate} and give the proof of Theorem \ref{thm:main-finiteness} by applying these applications.
In the end, we discuss the local analogue of Theorem \ref{thm:main-finiteness}.
\subsection{Kolyvagin's Selmer structure and construction theorems} \label{subsec:kolyvagin-structure-construction-theorems}
Let $\mathrm{Sel}(K, E[p^\infty])^{\pm}$ be the eigenspace with eigenvalue $\pm 1$ of $\mathrm{Sel}(K, E[p^\infty])$ with respect to the complex conjugation, respectively.
Since $p >2$, we have decomposition
$\mathrm{Sel}(K, E[p^\infty]) = \mathrm{Sel}(K, E[p^\infty])^{+} \oplus \mathrm{Sel}(K, E[p^\infty])^{-} $.
Write $r_p = \mathrm{cork}_{\mathbb{Z}_p} \mathrm{Sel}(K, E[p^\infty])$ and $r^{\pm}_p = \mathrm{cork}_{\mathbb{Z}_p} \mathrm{Sel}(K, E[p^\infty])^{\pm}$.
Recall that we denote by  $w(E/\mathbb{Q})$ the root number of $E$ over $\mathbb{Q}$.
\begin{thm}[Kolyvagin] \label{thm:kolyvagin-vanishing-order}
Under our working hypotheses in \S\ref{subsec:working-hypotheses}, we have:
\begin{enumerate}
\item $r^+_p = \mathrm{ord}(\ks^{\mathrm{Hg}}) +1$.
\item $\mathrm{ord}(\ks^{\mathrm{Hg}}) - r^-_p \geq 0$ and is even.
\end{enumerate}
\end{thm}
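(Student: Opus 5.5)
The plan is to deduce both statements from Kolyvagin's structure theorem for Selmer groups attached to a non-trivial Kolyvagin system for a self-dual Selmer structure, which is what Corollary \ref{cor:non-triviality-heegner} now makes available. Let $\nu_0 = \mathrm{ord}(\ks^{\mathrm{Hg}})$, which is finite by Corollary \ref{cor:non-triviality-heegner}. We work with $\mathcal{F}^+_{X\Lambda}$, which by Proposition \ref{prop:specializations-Selmer} is the classical Selmer structure. We also use the relations $\kappa^{\mathrm{Hg}}_{n\ell}$ versus $\kappa^{\mathrm{Hg}}_n$ from (\ref{eqn:kolyvagin-sysetm-relation}) specialized at $X\Lambda$.

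\textbf{Step 1: eigenspaces.} Complex conjugation $\tau$ acts on $D_n y_n$, and the standard computation (Gross, Kolyvagin) gives $\tau\kappa^{\mathrm{Hg}}_n = \epsilon_n \kappa^{\mathrm{Hg}}_n$ with $\epsilon_n = -w(E/\mathbb{Q})(-1)^{\nu(n)}$, up to torsion. Since $w(E/K) = -1$ and we are in the analytic-rank-$\leq 1$ twist situation, the sign for $\nu(n)$ even is fixed. The Kolyvagin system relation and the finite-singular isomorphism intertwine $\tau$ with a sign flip, because $\tau$ acts on $E[p^k]$ at an inert $\ell$ with eigenvalues $\pm 1$.

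\textbf{Step 2: the rank computation.} For $n \in \mathcal{N}_k$ with $k$ large, Kolyvagin's argument uses global duality (Poitou--Tate) for the $n$-transverse versus $n$-relaxed/strict structures, together with self-duality (parity lemma \cite[Lem. 1.5.1]{howard-kolyvagin}). It shows that adding a Kolyvagin prime $\ell$ chosen by Chebotarev changes the $\mathbb{F}_p$-dimension of $\mathrm{Sel}_{n\ell}(K,E[p])^{\pm}$ by one in a controlled eigenspace. Each step beyond the first nonvanishing class kills one dimension of the $+$ part, relative to the sign $\epsilon_n$, and leaves the $-$ part alone. Running the descent from $n=1$ up to $\nu_0$ gives:

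1. $\kappa_n = 0$ for all $\nu(n) < \nu_0$ forces $\dim \mathrm{Sel}_n(K,E[p])^{\epsilon}$ to stay $\geq 2$ in the relevant eigenspace.
2. At $\nu(n) = \nu_0$ the class $\kappa^{\mathrm{Hg}}_n$ is nonzero and, by Corollary \ref{cor:unramified-everywhere}, lies in $\mathrm{Sel}(K,E[p^k])$. This gives one "free" generator in the sign $\epsilon_{\nu_0}$ eigenspace.

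Passing to $p^\infty$ coranks by choosing $k \gg 0$ (so $p^{k-1}$ kills $\mathrm{Sel}_{/\mathrm{div}}$), we obtain $\nu_0 = r^+_p - 1$.

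\textbf{Step 3: parity and the inequality.} The $-$ part is bounded by the same descent: each of the $r^-_p$ independent elements must be killed by some prime in the chain, in the opposite eigenspace. This gives $\nu_0 \geq r^-_p$. The parity $\nu_0 - r^-_p \equiv 0 \pmod 2$ follows from two facts. First, $r^+_p + r^-_p = r_p$ is odd, by $p$-parity (Nekov\'a\v{r}/Dokchitser) and $w(E/K) = -1$. Second, $\nu_0 + 1 = r^+_p$ from Step 2. Hence $\nu_0 - r^-_p = r^+_p - 1 - r^-_p = r_p - 1 - 2r^-_p$, which is even.

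The main obstacle is Step 2: making the eigenspace bookkeeping rigorous for $p^k$ rather than $p$, and ensuring that the Chebotarev choice of $\ell$ can be made compatibly at every stage. Here I would follow \cite{kolyvagin-selmer} and \cite[\S4]{wei-zhang-mazur-tate}, whose arguments use only self-duality and non-triviality, so they apply verbatim in our supersingular setting.
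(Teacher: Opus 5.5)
\paragraph{Verdict.} Your outline matches the paper's: once Corollary~\ref{cor:non-triviality-heegner} gives $\ks^{\mathrm{Hg}} \neq 0$, the statement is Kolyvagin's theorem \cite[Thm.~4]{kolyvagin-selmer}, \cite[Thms.~1.2, 11.2]{wei-zhang-mazur-tate}. Your Step~2 is in the end a deferral to those same sources rather than an argument, and the paper's proof is also just this citation. There is, however, a concrete missing step.

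\paragraph{The gap: which eigenspace is the big one.} Kolyvagin's theorem does not say $r^+_p = \nu_0+1$. It says $r^{\epsilon}_p = \nu_0 + 1$ and $0 \le \nu_0 - r^{-\epsilon}_p \equiv 0 \pmod 2$ for $\epsilon = \epsilon_{\nu_0} = -w(E/\mathbb{Q})(-1)^{\nu_0}$, the sign of the minimal-order classes. Your Steps~2 and~3 silently take $\epsilon = +$, and nothing you wrote forces this:
\begin{itemize}
\item $\epsilon_{\nu_0}$ depends on the parity of $\nu_0$, which is the unknown.
\item Corollary~\ref{cor:unramified-everywhere} only places $\kappa^{\mathrm{Hg}}_n$ in $\mathrm{Sel}(K,E[p^k])^{\epsilon_{\nu_0}}$.
\item Your remark that ``the sign for $\nu(n)$ even is fixed'' is true, but it only reflects $w(E/\mathbb{Q})$, not the twist hypothesis.
\item Parity cannot decide it: both $\epsilon = +$ and $\epsilon = -$ are consistent with $p$-parity for $E$ and for $E^K$, and with $w(E/K) = -1$.
\end{itemize}
The same tacit assumption underlies your claim in Step~3 that the $-$ part is the one bounded by $\nu_0$.

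\paragraph{The fix.} The missing input is the one line the paper adds to the citation: $r^+_p > r^-_p$ under the working hypotheses. Indeed, $r^+_p = \mathrm{cork}_{\mathbb{Z}_p}\mathrm{Sel}(\mathbb{Q},E[p^\infty]) \ge 2$ by assumption. On the other hand, $r^-_p = \mathrm{cork}_{\mathbb{Z}_p}\mathrm{Sel}(\mathbb{Q},E^K[p^\infty]) \le 1$, because $E^K$ has analytic rank $\le 1$ and Gross--Zagier--Kolyvagin applies to $E^K$. If $\epsilon$ were $-$, Kolyvagin's theorem would give $r^-_p = \nu_0+1 \ge r^+_p + 1 \ge 3$, a contradiction. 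Hence $\epsilon = +$ and (1) follows. This is where the analytic-rank hypothesis on $E^K$ is genuinely used.

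\paragraph{Part (2).} Once (1) is in place, (2) is cheap under these hypotheses. The inequality is immediate from $\nu_0 = r^+_p - 1 \ge 1 \ge r^-_p$, with no further descent. For evenness, your argument via the $p$-parity theorem (Nekov\'a\v{r}, Dokchitser--Dokchitser) together with $w(E/K) = -1$ is a legitimate alternative to reading the parity off Kolyvagin's sign-refined structure theorem.
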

\begin{proof}
See \cite[Thm. 4]{kolyvagin-selmer} and also \cite[Thms. 1.2 and 11.2]{wei-zhang-mazur-tate}.
Note that $r^+_p > r^-_p$ by our working hypotheses.
\end{proof}
By Theorem \ref{thm:kolyvagin-vanishing-order}, we have  
$\mathrm{ord}(\ks^{\mathrm{Hg}}) - r^{-}_p  = \vert r^{+}_p - r^{-}_p \vert - 1 $ since  $w(E/\mathbb{Q}) \cdot (-1)^{\mathrm{ord}(\ks^{\mathrm{Hg}}) } = -1$.
In particular, 
 $\mathrm{ord}(\ks^{\mathrm{Hg}}) - r^{-}_p = 0$ if and only if
$\vert r^{+}_p - r^{-}_p \vert =  1$.
Write
\begin{equation} \label{eqn:structure-selmer-before}
\mathrm{Sel}(K, E[p^\infty])^{\pm}_{/\mathrm{div}} \simeq \bigoplus_{i \geq 1} \left( \mathbb{Z} / p^{a^{\pm}_i} \mathbb{Z} \right)^{\oplus 2}
\end{equation}
where $a^{\pm}_1 \geq  a^{\pm}_2 \geq  \cdots$.
\begin{thm}[Kolyvagin] \label{thm:structure-kolyvagin}
Under our working hypotheses in \S\ref{subsec:working-hypotheses}, $a^{\pm}_i$'s in (\ref{eqn:structure-selmer-before}) are determined by $\ks^{\mathrm{Hg}}$ as follows:
\begin{align*}
a^{+}_i & = \partial^{(\mathrm{ord}(\ks^{\mathrm{Hg}}) + 2i -1)} (\ks^{\mathrm{Hg}}) - \partial^{(\mathrm{ord}(\ks^{\mathrm{Hg}}) + 2i)} (\ks^{\mathrm{Hg}} ) , \\
a^{-}_{i + \vert r^{+}_p - r^{-}_p \vert - 1 } & = \partial^{(\mathrm{ord}(\ks^{\mathrm{Heeg}}) + 2i -2)} (\ks^{\mathrm{Hg}}) - \partial^{(\mathrm{ord}(\ks^{\mathrm{Hg}}) + 2i-1)} (\ks^{\mathrm{Hg}} )
\end{align*}
for $i \geq 1$.
\end{thm}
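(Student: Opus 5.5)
This is Kolyvagin's structure theorem \cite[Thm. 4]{kolyvagin-selmer}. In our setting its only input is the non-triviality of $\ks^{\mathrm{Hg}}$, which holds by Corollary \ref{cor:non-triviality-heegner} in the supersingular case and by Theorem \ref{thm:main-ordinary} in the ordinary case. I would therefore reproduce Kolyvagin's argument in the language of Kolyvagin systems for the self-dual Selmer structure, working at finite level $T/p^k T$ with $n \in \mathcal{N}_{2k-1}$ and $k$ large. The method is an induction on $\nu(n)$, following how $\mathrm{Sel}_n(K, E[p^k])$ changes as Kolyvagin primes are added, exactly as in the proof of \cite[Thm. 1.6.1]{howard-kolyvagin} and of Theorem \ref{thm:exact-bound}.

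\textbf{Step 1 (eigenspace bookkeeping).} Complex conjugation acts on $\kappa^{\mathrm{Hg}}_n$ with sign $\epsilon_n = -w(E/\mathbb{Q})(-1)^{\nu(n)}$ (up to a fixed global sign). Hence classes with $\nu(n)$ of fixed parity lie in a single eigenspace, and these eigenspaces alternate with $\nu(n)$. By Howard's Lemma 1.5.1, each $\mathrm{Sel}_n(K,E[p^k])$ decomposes as $(\mathbb{Z}/p^k)^{\epsilon}\oplus M(n)^{\oplus 2}$. I would refine this to the $\pm$-eigenspaces: the self-duality of the Cassels--Tate-type pairing is compatible with conjugation, so each eigenspace separately has the shape free part $\oplus$ square. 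This is the shape recorded in (\ref{eqn:structure-selmer-before}).

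\textbf{Step 2 (adding one prime).} Take $\ell \in \mathcal{P}_{2k-1}$ chosen by Chebotarev so that $\mathrm{loc}_\ell$ is nonzero on a generator of the largest cyclic summand of the appropriate eigenspace. Comparing $\mathrm{Sel}_n$ and $\mathrm{Sel}_{n\ell}$ through the finite--singular isomorphism $\phi^{\mathrm{fs}}_\ell$ and Poitou--Tate duality, the passage $n \to n\ell$ removes one copy of the top summand $\mathbb{Z}/p^{a}$ from one eigenspace and moves the free rank to the other eigenspace. Rigidity (Lemma \ref{lem:rigidity}) says that $\kappa_n$ generates $p^{d+\lambda(n)}\mathrm{Sel}_n$. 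Combined with the Kolyvagin relation (\ref{eqn:kolyvagin-sysetm-relation}), this gives $\mathrm{ord}(\kappa_n)-\mathrm{ord}(\kappa_{n\ell}) = a$, the exponent of the removed summand. Minimizing over $n$ with $\nu(n)=j$ turns these into formulas for the successive differences $\partial^{(j)}-\partial^{(j+1)}$.

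\textbf{Step 3 (matching indices).} By Theorem \ref{thm:kolyvagin-vanishing-order}, below $\nu = \mathrm{ord}(\ks^{\mathrm{Hg}})$ the classes vanish, because the Selmer ranks force it. The first $\mathrm{ord}(\ks^{\mathrm{Hg}})$ primes kill the excess coranks $r^+_p-1$ and $r^-_p$. The $|r^+_p-r^-_p|-1$ extra steps in the minus direction account for the index shift in $a^-$. Beyond that point, the differences of $\partial$ at consecutive levels alternate between peeling off $a^+_i$ (odd-to-even step) and $a^-_{i+|r^+_p-r^-_p|-1}$ (even-to-odd step), which gives the stated formulas.

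The main obstacle is Step 2 in the eigenspace-refined form. One must show that the Chebotarev choice can be made simultaneously achieving optimality (the minimum $\partial^{(j)}$ is realized) and removing exactly the largest summand of the correct eigenspace. For this I would first try to follow McCallum's and W. Zhang's treatment \cite[\S11]{wei-zhang-mazur-tate}.
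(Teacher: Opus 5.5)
The paper gives no argument of its own for this statement. It cites \cite[Thm.~1]{kolyvagin-selmer} together with \cite[Rem.~18]{wei-zhang-mazur-tate}. Note that \cite[Thm.~4]{kolyvagin-selmer}, which you quote, is what the paper uses for the rank statement, Theorem~\ref{thm:kolyvagin-vanishing-order}.

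Reconstructing the result from Lemma~\ref{lem:rigidity} is a reasonable route. Rigidity gives $\mathrm{ord}(\kappa^{\mathrm{Hg}}_n)=d(\ks^{\mathrm{Hg}})+\lambda(n)$ for $n\in\mathcal{N}_{2k-1}$, so everything reduces to computing $\min_{\nu(n)=j}\lambda(n)$. For that, Step~2 needs more than an optimal Chebotarev choice: you need a bound valid for \emph{every} $\ell$.
\begin{itemize}
\item The strict-at-$\ell$ Selmer group sits with cyclic quotient inside both $\mathrm{Sel}_n(K,E[p^k])^{\pm}$ and $\mathrm{Sel}_{n\ell}(K,E[p^k])^{\pm}$, because the localizations land in the rank-one modules $\mathrm{H}^1_f(K_\ell,E[p^k])^{\pm}$ and $\mathrm{H}^1_{\mathrm{tr}}(K_\ell,E[p^k])^{\pm}$.
\item By the horizontal-strip (Pieri) rule, the square part of the eigenspace receiving the free summand then interlaces the old one, losing at most its top summand. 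The square part of the other eigenspace cannot shrink.
\item Only this domination lets the greedy chain compute $\partial^{(j)}$.
\item Correspondingly, the optimal $\ell$ must satisfy conditions in both eigenspaces at once: surjective localization on the free summand of the $\epsilon_n$-part, and exact order $p^{a}$ on the top summand of the $-\epsilon_n$-part. With only the second condition, the $\epsilon_n$-part may grow.
\end{itemize}

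The genuine gap is Step~3. Your own mechanism says the free summand alternates between eigenspaces, and only the eigenspace \emph{receiving} it loses a summand. So the minus side receives only every other prime.
\begin{itemize}
\item Of the $\mathrm{ord}(\ks^{\mathrm{Hg}})=r^+_p-1$ primes needed before the first nonzero class, exactly $\lfloor \mathrm{ord}(\ks^{\mathrm{Hg}})/2\rfloor$ go to the minus side.
\item Of these, $\lfloor r^-_p/2\rfloor$ are spent on its divisible part.
\item Hence only $(\mathrm{ord}(\ks^{\mathrm{Hg}})-r^-_p)/2=(|r^+_p-r^-_p|-1)/2$ finite minus summands are consumed invisibly, not $|r^+_p-r^-_p|-1$.
\end{itemize}

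For example, take $r^+_p=3$ and $r^-_p=0$, so $\mathrm{ord}(\ks^{\mathrm{Hg}})=2$.
\begin{itemize}
\item The first prime moves the free summand to the minus side and consumes $a^-_1$.
\item The second moves it back and kills a $\mathbb{Z}/p^k$-pair in the plus part.
\item The third consumes $a^-_2$.
\end{itemize}
Thus $\partial^{(2)}-\partial^{(3)}=a^-_2$, whereas the displayed formula with $i=1$ predicts $a^-_3$. In this example the plus formula checks out: the fourth prime consumes $a^+_1=\partial^{(3)}-\partial^{(4)}$.

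So Steps~1--2 yield $a^-_{i+(|r^+_p-r^-_p|-1)/2}=\partial^{(\mathrm{ord}(\ks^{\mathrm{Hg}})+2i-2)}-\partial^{(\mathrm{ord}(\ks^{\mathrm{Hg}})+2i-1)}$. This agrees with the printed index only when $|r^+_p-r^-_p|=1$, i.e.\ when $\mathrm{ord}(\ks^{\mathrm{Hg}})=r^-_p$. Your Step~3 simply asserts the printed shift. Since your mechanism rules out any extra consumption on the minus side, you should check the index against Kolyvagin's Thm.~1 rather than fit the count to the statement.
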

\begin{proof}
See \cite[Thm. 1]{kolyvagin-selmer} and also \cite[Rem. 18]{wei-zhang-mazur-tate}.
\end{proof}
In order to obtain the above structure theorems, Kolyvagin first proved the following \emph{construction} theorem as an ingredient.
\begin{thm}[Kolyvagin] \label{thm:construction-rough}
Under our working hypotheses in \S\ref{subsec:working-hypotheses}, $\mathrm{Sel}(\mathbb{Q}, E[p^\infty])$ is generated by Kolyvagin cohomology classes. More precisely, we have inclusion
$$\mathrm{Sel}(\mathbb{Q}, E[p^\infty]) \subseteq \left\langle \kappa^{\mathrm{Hg}}_n : n \in \mathcal{N}_1 \right\rangle \subseteq \mathrm{H}^1(K, E[p^\infty])$$
where $\kappa^{\mathrm{Hg}}_n \in \mathrm{H}^1(K, E[I_n]) \subseteq \mathrm{H}^1(K, E[p^\infty])$.
\end{thm}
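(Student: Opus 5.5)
Throughout, write $\nu_0 = \mathrm{ord}(\ks^{\mathrm{Hg}})$ and $r = r^+_p$. The plan is to reproduce Kolyvagin's inductive construction of Selmer classes out of Kolyvagin classes. The two main inputs are the non-triviality of $\ks^{\mathrm{Hg}}$ (Theorem \ref{thm:main-ordinary} or Corollary \ref{cor:non-triviality-heegner}) and the Kolyvagin system relation (\ref{eqn:kolyvagin-sysetm-relation}) specialized at $X\Lambda$. The key tool is global duality: for $n\ell \in \mathcal{N}_k$, the sum over places of the local pairings of two Selmer-type classes vanishes. We will pair a fixed class $s \in \mathrm{Sel}(K,E[p^k])$ against $\kappa^{\mathrm{Hg}}_{n\ell}$ for a class unramified outside $n\ell$. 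Since $s$ is finite at $\ell$ and the finite/singular pairing at $\ell$ is perfect, this gives relations of the form
$$\sum_{q \mid n\ell} \langle \mathrm{loc}_q s, \mathrm{loc}_q \kappa^{\mathrm{Hg}}_{n\ell}\rangle = 0 .$$
We would work at finite level $E[p^k]$ and then pass to the limit $k \to \infty$, using $\mathrm{Sel}(K,E[p^\infty]) = \varinjlim \mathrm{Sel}(K,E[p^k])$ and the inclusions $\mathrm{H}^1(K,E[I_n]) \subseteq \mathrm{H}^1(K,E[p^\infty])$.

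First, we treat the minimal order. Take $n$ with $\nu(n) = \nu_0$ and $\kappa^{\mathrm{Hg}}_n \neq 0$. By Corollary \ref{cor:unramified-everywhere}, $\kappa^{\mathrm{Hg}}_n$ lies in $\mathrm{Sel}(K,E[I_n])$. Its complex-conjugation eigenvalue is $w(E/\mathbb{Q})(-1)^{\nu(n)+1}$, which places it in the $+$-part, i.e.\ in $\mathrm{Sel}(\mathbb{Q},\cdot)$, in view of Theorem \ref{thm:kolyvagin-vanishing-order}.

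Second, we produce many such classes. Using Chebotarev, choose primes $\ell_1,\dots,\ell_r$ whose Frobenius conditions make the localization map
$$\mathrm{Sel}(\mathbb{Q},E[p^k]) \to \bigoplus_i \mathrm{H}^1_f(K_{\ell_i},E[p^k])$$
injective, with the localization at each $\ell_i$ of prescribed shape. For the appropriate $n_i$ with $\nu(n_i) = r-1$ (so that $n_i/\ell_i$ has order $\nu_0$), the relation (\ref{eqn:kolyvagin-sysetm-relation}) relates $\mathrm{loc}_{\ell}\kappa^{\mathrm{Hg}}_{n\ell}$ to $\phi^{\mathrm{fs}}_\ell(\mathrm{loc}_\ell \kappa^{\mathrm{Hg}}_n)$. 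The duality relation above then shows that the classes $\kappa^{\mathrm{Hg}}_{n_i}$, up to the factor $p^{\partial}$ coming from the minimal divisibility $\partial = \partial^{(\nu_0)}$, span a submodule of rank $r$ whose localizations detect every element of $\mathrm{Sel}(\mathbb{Q},E[p^k])$. This step is the analogue of Theorem \ref{thm:kolyvagin-construction-theorem-original}, and it uses the structure Theorem \ref{thm:structure-kolyvagin} to bound the torsion part. A nondegeneracy comparison then gives
$$p^{\partial}\mathrm{Sel}(\mathbb{Q},E[p^{k+\partial}]) = \langle \kappa^{\mathrm{Hg}}_{n_i} \rangle .$$

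Third, to reach all of $\mathrm{Sel}(\mathbb{Q},E[p^\infty])$ and not just the subgroup $p^\partial(\cdot)$, we let $k$ grow while keeping $\partial$ fixed. An element $s \in \mathrm{Sel}(\mathbb{Q},E[p^\infty])$ of order $p^j$ lies in $p^{\partial}\mathrm{Sel}(\mathbb{Q},E[p^{j+\partial}])$ once we exploit the divisibility of the maximal divisible part. The remaining finite part is handled by the higher $\kappa^{\mathrm{Hg}}_n$ with $\nu(n) > \nu_0$, via the same duality argument applied to the quotient $\mathrm{Sel}_{/\mathrm{div}}$, inductively on the filtration by $\nu(n)$ dictated by Theorem \ref{thm:structure-kolyvagin}.

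The main obstacle is this second step: making the Chebotarev choices simultaneously compatible with the Kolyvagin prime conditions (the $\ell_i$ must lie in $\mathcal{P}_{k+\partial}$) and with injectivity of localization on the full rank-$r$ Selmer group. I would try first to use the large image hypothesis to make the extension cut out by $E[p^{k}]$ linearly disjoint from the Kummer extensions of the Selmer classes, following Kolyvagin and W. Zhang's argument, and to handle the $+$/$-$ eigenspaces separately via complex conjugation.
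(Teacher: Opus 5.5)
The paper gives no argument of its own for this statement; it cites Kolyvagin's Theorems 2 and 3 and W.~Zhang's Theorem 11.2 and Corollary 11.3. Measured against that argument, your Steps 1--2 outline Theorem \ref{thm:kolyvagin-construction-theorem-original}, and they only reach the maximal divisible part. Since $p^{\partial}$ kills $\mathrm{Sel}(\mathbb{Q},E[p^\infty])_{/\mathrm{div}}$, the group $p^{\partial}\mathrm{Sel}(\mathbb{Q},E[p^{k+\partial}])$ is just $\mathrm{Sel}(\mathbb{Q},E[p^\infty])_{\mathrm{div}}[p^k]$, which is Remark \ref{rem:tate-shafarevich-L}.

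Step 2 also has two errors in its details:
\begin{itemize}
\item Localization from $\mathrm{Sel}(\mathbb{Q},E[p^k])$ to $r$ Kolyvagin primes cannot be injective once $\mathrm{Sel}(\mathbb{Q},E[p^\infty])_{/\mathrm{div}}\neq 0$. The $+$-part of $\mathrm{H}^1_f(K_\ell,E[p^k])$ is cyclic, so the target has $p$-rank $r$, while $\mathrm{Sel}(\mathbb{Q},E[p^k])[p]$ has $p$-rank at least $r+2$.
\item With $\nu(n_i)=r-1=\nu_0$, the class $\kappa^{\mathrm{Hg}}_{n_i/\ell_i}$ vanishes, so the Kolyvagin relation at $\ell_i$ detects nothing. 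In the triangulation, the minimal classes $\kappa^{\mathrm{Hg}}_{n'_i}$ are detected by localization at the next prime $\ell_{\nu_0+i}\nmid n'_i$, and the triangular shape comes from global duality against classes with $\nu=\nu_0+1$.
\end{itemize}

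The genuine gap is Step 3, which is where all the content of Theorem \ref{thm:construction-rough} beyond Theorem \ref{thm:kolyvagin-construction-theorem-original} lies. Your claim that an element of order $p^j$ lies in $p^{\partial}\mathrm{Sel}(\mathbb{Q},E[p^{j+\partial}])$ fails for every element outside $\mathrm{Sel}_{\mathrm{div}}$. Your proposed induction ``dictated by Theorem \ref{thm:structure-kolyvagin}'' is circular: as the paper notes, Kolyvagin obtains the structure theorems (Theorems \ref{thm:kolyvagin-vanishing-order} and \ref{thm:structure-kolyvagin}) from this construction theorem, and you also use these to fix $r=\nu_0+1$ and the sign in Step 1.

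What is missing is a way to turn the ramified classes $\kappa^{\mathrm{Hg}}_n$ with $\nu(n)=j>\nu_0$ into Selmer classes. The mechanism is the Kolyvagin relation. Take $n\in\mathcal{N}_k$ and $\ell\mid n$. The singular part of $\mathrm{loc}_\ell\kappa^{\mathrm{Hg}}_n$ is $\phi^{\mathrm{fs}}_\ell(\mathrm{loc}_\ell\kappa^{\mathrm{Hg}}_{n/\ell})$. Moreover $\kappa^{\mathrm{Hg}}_{n/\ell}\in p^{\partial^{(j-1)}(\ks^{\mathrm{Hg}})}\mathrm{Sel}_{n/\ell}(K,E[p^k])$ is killed by $p^{k-\partial^{(j-1)}(\ks^{\mathrm{Hg}})}$. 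Hence $p^{k-\partial^{(j-1)}(\ks^{\mathrm{Hg}})}\kappa^{\mathrm{Hg}}_n$ is a Selmer class lying in the span of the $\kappa^{\mathrm{Hg}}_n$; this generalizes Corollary \ref{cor:unramified-everywhere}, and the class lies in the $(-1)^{j-\nu_0}$-eigenspace.

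You then still need three things:
\begin{itemize}
\item a Chebotarev choice, inductive in $j$, of conductors $n$ whose divisibility index is exactly $\partial^{(j)}(\ks^{\mathrm{Hg}})$;
\item a proof, via localization at auxiliary primes and global duality, that the resulting classes are independent of those already constructed;
\item a length count showing they exhaust $\mathrm{Sel}(\mathbb{Q},E[p^\infty])_{/\mathrm{div}}$.
\end{itemize}
The elementary divisors $\partial^{(j-1)}(\ks^{\mathrm{Hg}})-\partial^{(j)}(\ks^{\mathrm{Hg}})$ in Theorem \ref{thm:structure-kolyvagin} are the output of this step, not an input to it.
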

\begin{proof}
See \cite[Thms. 2 and 3]{kolyvagin-selmer} and \cite[Thm. 11.2 and Cor. 11.3]{wei-zhang-mazur-tate}.
\end{proof}
For the construction of the maximal divisible subgroup of the Selmer group, Kolyvagin gave a precise description, which was overlooked for a long time.
\begin{thm}[Kolyvagin] \label{thm:kolyvagin-construction-theorem-original}
We keep our working hypotheses in \S\ref{subsec:working-hypotheses}.
Let $k \geq 1$ be an integer.
Then there exist $2\cdot \mathrm{ord}(\ks^{\mathrm{Hg}}) +1$ Kolyvagin primes 
$$\ell_1, \ell_2,  \cdots , \ell_{2\cdot \mathrm{ord}(\ks^{\mathrm{Hg}})},  \ell_{2\cdot \mathrm{ord}(\ks^{\mathrm{Hg}}) +1} \in \mathcal{P}_{k+\partial^{(\mathrm{ord}(\ks^{\mathrm{Hg}}))} (\ks^{\mathrm{Hg}})}$$
such that
$$\mathrm{ord}_p \left(\mathrm{loc}_{\ell_{\mathrm{ord}(\ks^{\mathrm{Hg}})+i}} ( \kappa^{\mathrm{Hg}}_{n'_i} ) \right) = \partial^{(\mathrm{ord}(\ks^{\mathrm{Hg}}))} (\ks^{\mathrm{Hg}})$$
in $\mathrm{H}^1_f(K_{\ell_{\mathrm{ord}(\ks^{\mathrm{Hg}})+i}}, E[p^{k+\partial^{(\mathrm{ord}(\ks^{\mathrm{Hg}}))} (\ks^{\mathrm{Hg}})}])$
where 
$$n'_i =  \prod^{\mathrm{ord}(\ks^{\mathrm{Hg}})+i-1}_{j=i} \ell_j $$  
for $1 \leq i \leq \mathrm{ord}(\ks^{\mathrm{Hg}})+1$ so that $\nu(n'_i) = \mathrm{ord}(\ks^{\mathrm{Hg}})$.
Furthermore, 
the submodule 
\begin{align*}
\mathrm{Hg}_{k} & = \left\langle \kappa^{\mathrm{Hg}}_{n'_i} : i = 1, \cdots , \mathrm{ord}(\ks^{\mathrm{Hg}})+1 \right\rangle \\
& \subseteq p^{\partial^{(\mathrm{ord}(\ks^{\mathrm{Hg}}))} (\ks^{\mathrm{Hg}}) }\mathrm{Sel}(\mathbb{Q}, E[p^{k+\partial^{(\mathrm{ord}(\ks^{\mathrm{Hg}}))} (\ks^{\mathrm{Hg}}) }]) \subseteq \mathrm{H}^1(\mathbb{Q}, E[p^{k}])
\end{align*}
is isomorphic to $(\mathbb{Z}/p^k\mathbb{Z})^{\oplus r^+_p} = (\mathbb{Z}/p^k\mathbb{Z})^{\oplus (\mathrm{ord}(\ks^{\mathrm{Hg}})+1)}$.
\end{thm}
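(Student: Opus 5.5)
Write $\nu_0 = \mathrm{ord}(\ks^{\mathrm{Hg}})$, $\partial = \partial^{(\nu_0)}(\ks^{\mathrm{Hg}})$ and $K_0 = k+\partial$. The plan is to run Kolyvagin's inductive Chebotarev construction, using the Kolyvagin system relation (\ref{eqn:kolyvagin-sysetm-relation}) specialized to the augmentation ideal together with the structure theorems (Theorems \ref{thm:kolyvagin-vanishing-order} and \ref{thm:structure-kolyvagin}).

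\textbf{Step 1 (properties of the classes).} Start from an $n_0 \in \mathcal{N}_{K_0}$ with $\nu(n_0)=\nu_0$ and $\mathrm{ord}_p(\kappa^{\mathrm{Hg}}_{n_0}) = \partial$; such an $n_0$ exists by the definition of $\partial^{(\nu_0)}$. We may take its primes in $\mathcal{P}_{K_0}$ by the rigidity in Lemma \ref{lem:rigidity}, since the value $\partial^{(i)}$ is attained for $n$ with arbitrarily deep congruence conditions. By Corollary \ref{cor:unramified-everywhere}, every nonzero $\kappa^{\mathrm{Hg}}_n$ with $\nu(n)=\nu_0$ lies in $\mathrm{Sel}(K, E[p^{K_0}])$. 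By the sign computation ($\epsilon_n = -w(E/\mathbb{Q})(-1)^{\nu(n)}$, combined with Theorem \ref{thm:kolyvagin-vanishing-order}) it lies in the $+$-eigenspace, which is identified with $\mathrm{Sel}(\mathbb{Q}, E[p^{K_0}])$. Minimality of $\partial$ among $\nu(n)=\nu_0$ then shows that each such class lies in $p^{\partial}\mathrm{Sel}$, once we use that the divisible part of $\mathrm{Sel}(K,E[p^\infty])^+$ has corank $\nu_0+1$ and that the $p^\partial$-multiples of the finite part are killed by the hypothesis on $k$.

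\textbf{Step 2 (choosing the primes).} Choose $\ell_1,\dots,\ell_{2\nu_0+1}$ inductively by Chebotarev. Given the current classes $\kappa^{\mathrm{Hg}}_{n'_1},\dots,\kappa^{\mathrm{Hg}}_{n'_{i-1}}$ (each divided by $p^\partial$), pick $\ell_{\nu_0+i} \in \mathcal{P}_{K_0}$ so that two conditions hold. First, $\mathrm{loc}_{\ell_{\nu_0+i}}$ of the class $\kappa^{\mathrm{Hg}}_{n'_i}$ has exact order $p^{k}$ after dividing by $p^\partial$, which gives the displayed equality $\mathrm{ord}_p = \partial$. Second, its localization is independent of those of the previously constructed classes. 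This uses the standard Kolyvagin/Mazur--Rubin lemma: for finitely many classes in $\mathrm{H}^1(K,E[p^{K_0}])$ spanning a free submodule, there are Kolyvagin primes whose Frobenius on $K(E[p^{K_0}])$ and the corresponding cocycle fields prescribes the localizations. The large image hypothesis gives the required linear disjointness. The sliding window $n'_{i+1} = n'_i\ell_{\nu_0+i}/\ell_i$ is arranged through (\ref{eqn:kolyvagin-sysetm-relation}). Since $\mathrm{loc}_{\ell_{\nu_0+i}}\kappa_{n'_i}$ is nonzero of exact order $p^k$ (modulo $p^\partial$), the finite-singular isomorphism makes $\kappa_{n'_i\ell_{\nu_0+i}}$ nonzero with the singular part controlled. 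A second application, at $\ell_i$ and using minimality at level $\nu_0+1$ via Theorem \ref{thm:structure-kolyvagin}, transfers this to $\kappa_{n'_{i+1}}$ keeping $\mathrm{ord}_p = \partial$. This is the step that uses $2\nu_0+1$ primes.

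\textbf{Step 3 (freeness of $\mathrm{Hg}_k$).} Consider the matrix of localizations $\bigl(\mathrm{loc}_{\ell_{\nu_0+j}}(p^{-\partial}\kappa_{n'_i})\bigr)_{i,j}$ with values in the free rank-one $\mathbb{Z}/p^k$-modules $\mathrm{H}^1_f(K_\ell, E[p^{k}])$. By the choices in Step 2 it is triangular with unit diagonal. Hence $\mathrm{Hg}_k \simeq (\mathbb{Z}/p^k)^{\nu_0+1}$, and by Step 1 it sits inside $p^\partial\mathrm{Sel}(\mathbb{Q}, E[p^{K_0}])$. Finally, $r^+_p = \nu_0+1$ by Theorem \ref{thm:kolyvagin-vanishing-order}.

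\textbf{Main obstacle.} The hardest step is Step 2: keeping the divisibility exactly $\partial$ while sliding the window. This requires ruling out that the new class $\kappa_{n'_{i+1}}$ becomes more divisible. I would handle it by comparing with $\partial^{(\nu_0+1)}$ and $\partial^{(\nu_0)}$ through the rigidity formula of Lemma \ref{lem:rigidity}, in which $\lambda^{(k)}(n)$ stays constant when the chosen primes do not change the finite part of the Selmer group.
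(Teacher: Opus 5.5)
Your outline follows the same inductive Chebotarev triangulation that the paper's proof relies on; the paper only cites Kolyvagin's Theorem~3, the mod $p^k$ version of the triangulation in W.~Zhang's Lemma~8.4. However, Step~2, the sliding of the window, does not go through as written.

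\textbf{The gap in Step~2.} The only condition you put on $\ell=\ell_{\nu_0+i}$ concerns $\mathrm{loc}_{\ell}\kappa^{\mathrm{Hg}}_{n'_i}$, a class in the $+$-eigenspace of complex conjugation. That controls $\mathrm{loc}_\ell\kappa_m$ for $m=n'_i\ell$. At $\ell_i$, however, the relation (\ref{eqn:kolyvagin-sysetm-relation}) only gives $\mathrm{loc}_{\ell_i}\kappa_m=\phi^{\mathrm{fs}}_{\ell_i}(\mathrm{loc}_{\ell_i}\kappa_{n'_{i+1}})$, which is circular. Minimality of $\partial^{(\nu_0)}$ or $\partial^{(\nu_0+1)}$ only gives lower bounds on divisibility, never the upper bound $\mathrm{ord}_p(\kappa_{n'_{i+1}})\le\partial$ you need.

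The condition is genuinely insufficient. Take $\nu_0=1$, $\partial=0$, $k=1$, so $\mathrm{Sel}(K,E[p])^+$ is a plane and $\mathrm{Sel}(K,E[p])^-=\mathbb{F}_p s$. Since $\kappa_{\ell_1}\neq0$, Lemma~\ref{lem:rigidity} gives $\mathrm{Sel}_{\mathcal{F}(\ell_1)}(K,E[p])=\mathbb{F}_p\kappa_{\ell_1}$. Hence $\mathbb{F}_p\kappa_{\ell_1}$ is the kernel of $\mathrm{loc}_{\ell_1}$ on $\mathrm{Sel}(K,E[p])$, and in particular $\mathrm{loc}_{\ell_1}s\neq0$. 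Chebotarev allows a prime $\ell_2$ with $\mathrm{loc}_{\ell_2}\kappa_{\ell_1}\neq0$ but $\mathrm{loc}_{\ell_2}s=0$, because the two classes lie in opposite eigenspaces. Global reciprocity for $\kappa_{\ell_1\ell_2}$ and $s$ then has the single term $\langle\mathrm{loc}_{\ell_1}\kappa_{\ell_1\ell_2},\mathrm{loc}_{\ell_1}s\rangle=0$. Since $\mathrm{H}^1_{\mathrm{tr}}(K_{\ell_1},E[p])^-$ and $\mathrm{H}^1_f(K_{\ell_1},E[p])^-$ are lines in perfect duality and $\phi^{\mathrm{fs}}_{\ell_1}$ is an isomorphism, we get $\mathrm{loc}_{\ell_1}\kappa_{\ell_2}=0$. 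Hence $\kappa_{\ell_2}\in\mathbb{F}_p\kappa_{\ell_1}$, and $\mathrm{loc}_{\ell_2}\kappa_{\ell_2}=0\neq\mathrm{loc}_{\ell_2}\kappa_{\ell_1}$ forces $\kappa_{\ell_2}=0$. So $\mathrm{Hg}_1$ is a line, not a plane. Equivalently, $\lambda^{(1)}$ jumps from $0$ at $\ell_1$ to a positive value at $\ell_2$, so the rigidity formula you invoke under ``main obstacle'' does not keep $\lambda$ constant by itself.

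\textbf{The missing idea.} You need a second, opposite-sign Chebotarev condition fed into Poitou--Tate reciprocity. Let $s_i$ be a class of exact order $p^{k+\partial}$ in the $(-)$-eigenspace of $\mathrm{Sel}_{\mathcal{F}(n'_i/\ell_i)}(K,E[p^{k+\partial}])$. For $k=1$, $\partial=0$ this is a generator of the line $\mathrm{Sel}_{\mathcal{F}(n'_i/\ell_i)}(K,E[p])^-$. In general its existence comes from the core-rank-one structure together with the sign alternation of the eigenspaces. Require in addition that $\mathrm{loc}_\ell s_i$ generate $\mathrm{H}^1_f(K_\ell,E[p^{k+\partial}])^-$. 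Since $\kappa_m$ and $s_i$ are both transverse at the primes of $n'_i/\ell_i$, reciprocity becomes $\langle\mathrm{loc}_{\ell_i}\kappa_m,\mathrm{loc}_{\ell_i}s_i\rangle=-\langle\mathrm{loc}_\ell\kappa_m,\mathrm{loc}_\ell s_i\rangle$. The right side has valuation exactly $\partial$, so $\mathrm{ord}_p(\mathrm{loc}_{\ell_i}\kappa_{n'_{i+1}})\le\partial$, and minimality gives equality.

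\textbf{Two smaller points.}
First, your other condition in Step~2 (independence of $\mathrm{loc}_\ell\kappa_{n'_i}$ from the localizations of earlier classes) cannot be met. All the $\kappa_{n'_j}$ lie in the $+$-part and land in the rank-one module $\mathrm{H}^1_f(K_\ell,E[p^{k+\partial}])^+$. It is also unnecessary: $\ell_{\nu_0+j}\mid n'_i$ for $j<i$, and $\kappa_{n'_i}$ is both finite and transverse there, so the matrix in Step~3 is triangular automatically.

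Second, in Step~1 this statement has no hypothesis on $k$. What you need is $p^{\partial}\mathrm{Sel}(\mathbb{Q},E[p^\infty])_{/\mathrm{div}}=0$, which does follow from Theorem~\ref{thm:structure-kolyvagin} because $a^+_1\le\partial^{(\nu_0+1)}\le\partial$. Even then, divisibility of $\kappa_n$ in $\mathrm{H}^1$ does not by itself give divisibility inside the Selmer group. One fix is to take all primes in $\mathcal{P}_M$ with $M\ge k+2\partial$. Then the mod $p^{k+\partial}$ class equals $p^{\partial}\cdot p^{M-k-2\partial}\kappa^{(M)}_n$, where $\kappa^{(M)}_n\in\mathrm{Sel}(\mathbb{Q},E[p^M])$ is the mod $p^M$ class and is killed by $p^{M-\partial}$.
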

\begin{proof}
This is \cite[Thm. 3]{kolyvagin-selmer} with $p_0=1$.
When $k =1$ (and $\partial^{(\mathrm{ord}(\ks^{\mathrm{Hg}}))} (\ks^{\mathrm{Hg}})=0$), it is exactly the triangulation of $\mathrm{Sel}(\mathbb{Q}, E[p])$ in \cite[Lem. 8.4]{wei-zhang-mazur-tate}.
Indeed, the straightforward generalization of the proof of \cite[Lem. 8.4]{wei-zhang-mazur-tate} to the mod $p^k$ setting yields the conclusion.
When $k \gg 0$, it gives the triangulation of $\mathrm{Sel}(\mathbb{Q}, E[p^\infty])_{\mathrm{div}}[p^k]$
where $\mathrm{Sel}(\mathbb{Q}, E[p^\infty])_{\mathrm{div}}$ is the maximal divisible subgroup of $\mathrm{Sel}(\mathbb{Q}, E[p^\infty])$.
\end{proof}
\begin{rem} \label{rem:tate-shafarevich-L}
Since $\nu(n'_i) = \mathrm{ord}(\ks^{\mathrm{Hg}})$, each $\kappa^{\mathrm{Hg}}_{n'_i}$ in Theorem \ref{thm:kolyvagin-construction-theorem-original} is unramified everywhere (Corollary \ref{cor:unramified-everywhere}). 
Varying $k$ in Theorem \ref{thm:kolyvagin-construction-theorem-original}, we are able to construct
$\mathrm{Sel}(\mathbb{Q}, E[p^\infty])_{\mathrm{div}}$  in terms of $\ks^{\mathrm{Hg}}$; more precisely, we have
$$\mathrm{Sel}(\mathbb{Q}, E[p^\infty])_{\mathrm{div}} = \bigcup_{k\geq 1} \mathrm{Hg}_k $$
where the union is taken under the embedding $\mathrm{H}^1(\mathbb{Q}, E[p^{k+ \partial^{(\mathrm{ord}(\ks^{\mathrm{Hg}}))} (\ks^{\mathrm{Hg}}) }]) \hookrightarrow \mathrm{H}^1(\mathbb{Q}, E[p^{\infty}])$ under the large image assumption.
 The variation of $k$ yields (infinitely many) relations among the Kolyvagin cohomology classes in $\bigcup_{k\geq 1} \mathrm{Hg}_k $.
Denote by $L$ the compositum of $K[n'_i]$ for every $i =1 , \cdots , \mathrm{ord}(\ks^{\mathrm{Hg}})+1$ and every $k \geq 1$.
The arithmetic of $E(L)$ encodes the finiteness of $\sha(E/\mathbb{Q})[p^\infty]$ since every element of $\sha(E/\mathbb{Q})[p^\infty]_{\mathrm{div}}$ splits over $L$.
In other words, we have
$$\sha(E/\mathbb{Q})[p^\infty]_{\mathrm{div}} \subseteq \mathrm{H}^1(L/K, E(L))[p^\infty].$$
This inclusion can be viewed as a new type of ``upper bound" of $\sha(E/\mathbb{Q})[p^\infty]_{\mathrm{div}}$ although it comes from the construction of elements of Selmer groups.
By using this upper bound, we would like to study the $p$-divisibility question for $\sha(E/\mathbb{Q})[p^\infty]$ in $\mathrm{H}^1(L/K, E(L))[p^\infty]$ in a near future; the same question for $\sha(E/\mathbb{Q})[p^\infty]$ in $\mathrm{H}^1(K, E)[p^\infty]$ is treated extensively in  \cite{ciperiani-stix-WC1,ciperiani-stix-WC2}.
\end{rem}
See also \cite{wuthrich-self-points, hatton-kolyvagin-derivatives, radicevic-explict-realization} for the constructions of elements of Tate--Shafarevich groups from \emph{different} algebraic points.

\subsection{Proof of Theorem \ref{thm:main-finiteness}} \label{subsec:proof-theorem-main-finiteness}

\subsubsection{$(1) \Rightarrow (2)$}
Suppose that $\sha(E/\mathbb{Q})[p^\infty]$ is finite.
Since we assume the Galois image is large, we have
that
$\mathrm{Sel}(\mathbb{Q}, E[p^\infty])_{\mathrm{div}}$ is as the same as the Kummer image of $E(\mathbb{Q}) \otimes \mathbb{Q}_p/\mathbb{Z}_p$ in $\mathrm{Sel}(\mathbb{Q}, E[p^\infty])$.
Write $\partial = \partial^{(\mathrm{ord}(\ks^{\mathrm{Hg}}))}(\ks^{\mathrm{Hg}})$ for convenience.
For every integer $k \geq 1$ with $p^{k+\partial-1} \mathrm{Sel}(K, E[p^\infty])_{/\mathrm{div}} = 0$ and every $n_k \in \mathcal{N}_{k+\partial}$ with $\nu(n_k) = r^+_p - 1$,
we have
\begin{align*}
\kappa^{\mathrm{Hg}}_{n_k} & \in p^{\partial} \mathrm{Sel}(\mathbb{Q}, E[p^{k+\partial}]) \\
& \subseteq \mathrm{Sel}(\mathbb{Q}, E[p^{\infty}])_{\mathrm{div}} \\
& = \textrm{the Kummer image of } E(\mathbb{Q}) \otimes \mathbb{Q}_p/\mathbb{Z}_p
\end{align*}
where the first inclusion follows from Corollary \ref{cor:unramified-everywhere} and the second inclusion follows from the annihilation $p^{\partial} \mathrm{Sel}(K, E[p^\infty])_{/\mathrm{div}} = 0$.
By analyzing the diagram (\ref{eqn:kummer-map-Kn}) and the shape of the representing cocycle 
$\sigma \mapsto \dfrac{1}{p^{k+\partial}} \cdot \left( P^{\sigma}_{n_k} - P_{n_k}\right)$ of the image of $\kappa^{\mathrm{Hg}}_{n_k}$ in $\mathrm{H}^1(K[n_k]/K, E(K[n_k]))$ described in \S\ref{subsubsec:derived-heegner-points}, 
the triviality of the image of $\kappa^{\mathrm{Hg}}_{n_k}$ in $\sha(E/\mathbb{Q})[p^\infty]$ is equivalent to the inclusion
 $$P_{n_k} \in p^{ \partial }\left( E(\mathbb{Q}) +p^kE(K[n_k]) \right).$$ 
 See also \cite[Prop. 1.2.3(2)]{ciperiani-wiles}.
\subsubsection{$(2) \Rightarrow (3)$}
This is trivial.
\subsubsection{$(3) \Rightarrow (1)$}
Following Remark \ref{rem:tate-shafarevich-L}, we have
$$\mathrm{Sel}(\mathbb{Q}, E[p^\infty])_{\mathrm{div}} = \bigcup_{k\geq 1} \mathrm{Hg}_k $$
For every $\kappa^{\mathrm{Hg}}_{n'_i}$ in $\mathrm{Hg}_k$ (for some $k \geq 1$),
we assume that the corresponding derived Heegner point $P_{n'_i}$ satisfies
 $$P_{n'_i} \in p^{ \partial }\left( E(\mathbb{Q}) +p^kE(K[n'_i]) \right),$$
 and it is equivalent to that the image of  $\kappa^{\mathrm{Hg}}_{n'_i}$  in $\sha(E/\mathbb{Q})[p^\infty]$  is trivial. Thus, it  lies in the Kummer image of $E(\mathbb{Q}) \otimes \mathbb{Q}_p/\mathbb{Z}_p$.
This implies that 
$$\mathrm{Sel}(\mathbb{Q}, E[p^\infty])_{\mathrm{div}} \subseteq \textrm{the Kummer image of } E(\mathbb{Q}) \otimes \mathbb{Q}_p/\mathbb{Z}_p.$$
The conclusion follows.
\subsubsection{A natural question}
Theorem \ref{thm:main-finiteness} tells us that the finiteness of $\sha(E/\mathbb{Q})[p^\infty]$ controls the location of derived Heegner points very strongly and vice versa. The following (meta-)question is natural and fundamental for the finiteness of $\sha(E/\mathbb{Q})[p^\infty]$.
\begin{ques}
Which principle for Heegner points would imply that
$$ P_{n_k} \in p^{ \partial^{(\mathrm{ord}(\ks^{\mathrm{Hg}})) } (\ks^{\mathrm{Hg}}) }\left( E(\mathbb{Q}) +p^kE(K[n_k]) \right)$$
for every integer $k \geq 1$ with $p^{k+\partial-1} \mathrm{Sel}(K, E[p^\infty])_{/\mathrm{div}} = 0$ and every $n_k \in \mathcal{N}_{k+\partial}$ with $\nu(n_k) = r^+_p - 1$?
For example, how to obtain $\mathrm{cork}_{\mathbb{Z}_p} \mathrm{H}^1(L/K, E(L))[p^\infty] = 0$ for $L$ in Remark \ref{rem:tate-shafarevich-L}?
\end{ques}


\subsection{The local-global principle on the linear dependence in Mordell--Weil groups} \label{subsec:linear-dependence}

\subsubsection{The local counterpart of Theorem \ref{thm:main-finiteness}}
We expand Theorem \ref{thm:main-finiteness} as follows.
\begin{thm} \label{thm:main-finiteness-local-criteria}
We keep all the assumptions in Theorem \ref{thm:main-finiteness}.
Fix an integer $k \geq 1$ with $p^{k+\partial-1} \mathrm{Sel}(K, E[p^\infty])_{/\mathrm{div}} = 0$.
Then any equivalent statement (1), (2), or (3) in Theorem \ref{thm:main-finiteness} is also equivalent to any of the following statements. 
\begin{enumerate}
\item[(4)]
 For every $1 \leq i \leq r^+_p$, the derived Heegner point $P_{n_{k, i}} \in E(K[n_{k, i}])$ in (\ref{eqn:generators-derived-heegner-points}) satisfies
$$\mathrm{red}_{v_{k,i}} \left( P_{n_{k, i}} \right)  \in \mathrm{red}_{v_{k,i}} \left(  p^{ \partial }\left( E(\mathbb{Q}) +p^kE(K[n_{k, i}]) \right) \right) $$
for all but finitely many primes $ v_{k,i}$ of $K[n_{k, i}]$
 where
$\mathrm{red}_{v_{k,i}} : E(K[n_{k,i}]) \to \widetilde{E}(\mathbb{F}_{v_{k,i}})$ is the reduction map and $\mathbb{F}_{v_{k,i}}$ is the residue field of $K[n_{k,i}]$ at $v_{k,i}$.
\item[(5)]
 For every $1 \leq i \leq r^+_p$, the derived Heegner point $P_{n_{k, i}} \in E(K[n_{k, i}])$ in (\ref{eqn:generators-derived-heegner-points}) satisfies
$$\mathrm{red}_{v_{k,i}} \left( P_{n_{k, i}} \right)  \in \mathrm{red}_{v_{k,i}} \left(  p^{ \partial }\left( E(\mathbb{Q}) +p^kE(K[n_{k, i}]) \right) \right) $$
for every $ v_{k,i} \in S^{\mathrm{fin}}_i$, a finite subset of primes of $K[n_{k, i}]$ depending on 
$E$, $P_{n_{k, i}}$,  $p^{ \partial }\left( E(\mathbb{Q}) +p^kE(K[n_{k, i}]) \right)$, and a basis of 
$E(K[n_{k, i}])$.
\end{enumerate}
\end{thm}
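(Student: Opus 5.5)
The plan is to reduce everything to statement (3) of Theorem \ref{thm:main-finiteness}, which is already known to be equivalent to (1) and (2). The implications (3) $\Rightarrow$ (4) $\Rightarrow$ (5) are formal. If $P_{n_{k,i}}$ lies in the subgroup
$$A_{k,i} := p^{\partial}\left( E(\mathbb{Q}) + p^k E(K[n_{k,i}]) \right) \subseteq E(K[n_{k,i}]),$$
then its reduction lies in $\mathrm{red}_{v}(A_{k,i})$ for every prime $v$ of good reduction. For (4) $\Rightarrow$ (5), we only need the finite set $S^{\mathrm{fin}}_i$ in (5) to be chosen inside the cofinite set of primes in (4). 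So the whole content is (5) $\Rightarrow$ (3), which is a local-global principle for membership in a subgroup of a finitely generated abelian group. We apply it to the finitely generated group $E(K[n_{k,i}])$, with subgroup $A_{k,i}$ and point $P = P_{n_{k,i}}$.

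The key step is a theorem of the type proved by Jossen, Perucca and Weston (the ``detecting linear dependence'' results of Gajda--Górnisiewicz and Banaszak). For an elliptic curve $E$ over a number field $F$ and a subgroup $A \subseteq E(F)$, one needs: if $P \in E(F)$ satisfies $\mathrm{red}_v(P) \in \mathrm{red}_v(A)$ for all but finitely many (or a suitable density-one set of) primes $v$, then $P \in A$. We do not need full generality. Since $A_{k,i} \supseteq p^{k+\partial}E(K[n_{k,i}])$, the quotient $E(K[n_{k,i}])/A_{k,i}$ is a finite $p$-group. The question therefore reduces to detecting whether the class of $P$ vanishes in a finite $p$-primary quotient.

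I plan to handle this via Kummer theory. Suppose $P \notin A_{k,i}$. Then there is a homomorphism $\phi: E(K[n_{k,i}]) \to \mathbb{Z}/p^{m}$ that kills $A_{k,i}$ but not $P$. Consider the compositum of $K[n_{k,i}](E[p^{k+\partial}])$ with the fields $K[n_{k,i}](\tfrac{1}{p^{k+\partial}} Q)$ for $Q$ in a basis of $E(K[n_{k,i}])$. Using the large image assumption, and the fact that the Kummer extensions for a basis of the free part are independent over the $p$-power torsion field (a Bashmakov--Ribet type statement), Chebotarev produces a positive density of primes $v$ with the following property: the Frobenius acts on the division points so that $\mathrm{red}_v$ of $E(K[n_{k,i}])/p^{k+\partial}$ injects into $\widetilde{E}(\mathbb{F}_v)/p^{k+\partial}$, or at least separates $P$ from $A_{k,i}$ through $\phi$. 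This contradicts (4), and also (5) once $S^{\mathrm{fin}}_i$ is taken to contain one such prime for each of the finitely many possible $\phi$. This explains why $S^{\mathrm{fin}}_i$ depends on $P$, on $A_{k,i}$ and on a basis of $E(K[n_{k,i}])$, as stated.

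The main obstacle is the Chebotarev/Kummer independence step: finding primes $v$ at which the reduction of the finitely generated group modulo $p^{k+\partial}$ keeps $P$ outside $\mathrm{red}_v(A_{k,i})$. The difficulty comes from torsion points of $E(K[n_{k,i}])$ of $p$-power order, and from the relevant Kummer extension possibly failing to be linearly disjoint from the $p$-division field. I would first try to quote the Jossen/Perucca theorem for elliptic curves directly, which handles arbitrary subgroups. Failing that, I would use the large-image hypothesis (so $E(K[n_{k,i}])[p]$ is trivial, as $K[n_{k,i}]$ is solvable over $\mathbb{Q}$ while $\mathrm{GL}_2(\mathbb{F}_p)$ image forces no rational $p$-torsion) to simplify the torsion issue.
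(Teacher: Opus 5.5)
Your strategy matches the paper's. The paper proves this statement only by invoking the local--global principle for linear dependence in Mordell--Weil groups, applied to $P_{n_{k,i}}$ and the subgroup $A_{k,i}$ exactly as you set them up: it cites Gajda--G\'ornisiewicz for (4) and Banaszak--Kraso\'n for (5). However, two steps of your write-up do not go through as stated.

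\emph{First, the bookkeeping.} You say the whole content is (5) $\Rightarrow$ (3) and propose to get it from Jossen's theorem. That theorem assumes membership at a set of primes of density one, so it gives (4) $\Rightarrow$ (3) but says nothing about a prescribed finite set of primes. This is why the paper cites the separate finite-set result of Banaszak--Kraso\'n for (5). Also, (4) $\Rightarrow$ (5) is not formal: $S^{\mathrm{fin}}_i$ is fixed by $E$, $P_{n_{k,i}}$, $A_{k,i}$ and a basis, not by the unknown exceptional set in (4). Organize it instead as (3) $\Rightarrow$ (4) and (3) $\Rightarrow$ (5) trivially, and then prove (4) $\Rightarrow$ (3) and (5) $\Rightarrow$ (3) separately.

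\emph{Second, the mechanism of your Kummer fallback.} Let $m = k+\partial$. The first property you ask Frobenius to produce is an injection $E(K[n_{k,i}])/p^{m} \hookrightarrow \widetilde{E}(\mathbb{F}_v)/p^{m}$. This is impossible as soon as $E(K[n_{k,i}])$ has rank at least $3$, because $\widetilde{E}(\mathbb{F}_v)$ is generated by two elements. Your alternative, ``separates $P$ from $A_{k,i}$ through $\phi$'', is exactly the step that needs an argument. Here is one that works.
\begin{itemize}
\item Put $F = K[n_{k,i}]$. Since $F/\mathbb{Q}$ is Galois, solvable and unramified at $p$, large image and $p \geq 5$ give $\mathrm{Gal}(F(E[p^m])/F) \simeq \mathrm{GL}_2(\mathbb{Z}/p^m)$ and $E(F)[p]=0$.
\item Bashmakov--Ribet then identifies, via the Kummer pairing, the Galois group of the $p^m$-division field of $E(F)$ over $F(E[p^m])$ with $\mathrm{Hom}(E(F)/p^m, E[p^m])$.
\item Given $\phi \colon E(F) \to \mathbb{Z}/p^m$ killing $A_{k,i}$, choose $\sigma$ whose Kummer image is $Q \mapsto \phi(Q)T$, for a fixed $T \in E[p^m]$ of exact order $p^m$. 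This shape is stable under conjugation.
\item Let $v \nmid p$ be a prime of good reduction whose Frobenius lies in the class of $\sigma$. Then $E[p^m] \subseteq \widetilde{E}(\mathbb{F}_v)$, and $\mathrm{red}_v(Q) \in p^m\widetilde{E}(\mathbb{F}_v)$ if and only if $\phi(Q)=0$.
\item Hence $\mathrm{red}_v(P)=\mathrm{red}_v(a)$ with $a \in A_{k,i}$ forces $\phi(P)=\phi(a)=0$.
\end{itemize}
Chebotarev gives a positive-density set of such $v$, which proves (4) $\Rightarrow$ (3). Picking one such $v$ for each nonzero $\phi \in \mathrm{Hom}(E(F)/A_{k,i}, \mathbb{Z}/p^m)$ gives a finite set $S^{\mathrm{fin}}_i$ for which (5) $\Rightarrow$ (3).

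With this filled in, your argument is a self-contained substitute for the cited theorems. It buys independence from them at the cost of using the large-image hypothesis, which those general results do not need.
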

\begin{proof}
The equivalence follows essentially from the local-global principle on the linear dependence in Mordell--Weil groups (Theorem \ref{thm:local-global}). In particular, we refer \cite[Thm. 5.1]{gajda-gornisiewicz-linear-dependence} for (4) and \cite[Thm. 6.4]{banaszak-krason-mordell-weil} for (5).
See \cite[pp. 334--335]{banaszak-krason-mordell-weil} for the explicit construction of the \emph{finite} set $S^{\mathrm{fin}}_i$.

\end{proof}
Since the inclusion $\mathrm{red}_{v_{k,i}} \left( P_{n_{k, i}} \right) \in  p^{ \partial }\left( \widetilde{E}(\mathbb{F}_v) +p^k\widetilde{E}(\mathbb{F}_{v_{k,i}}) \right) $ follows immediately from the definition of Selmer groups
and 
$$\mathrm{red}_{v_{k,i}} \left(  p^{ \partial }\left( E(\mathbb{Q}) +p^kE(K[n_{k, i}]) \right) \right) \subseteq    p^{ \partial }\left( \widetilde{E}(\mathbb{F}_v) +p^k\widetilde{E}(\mathbb{F}_{v_{k,i}}) \right),$$
Theorem \ref{thm:main-finiteness-local-criteria} proposes more precise locations of the derived Heegner points in (\ref{eqn:generators-derived-heegner-points}) in the reduction of elliptic curves at (finitely many) various primes.
Here,  $\mathbb{F}_v$ is the residue field of $\mathbb{Q}$ at $v$ lying below $v_{k,i}$.
In this sense, the fundamental obstruction for the finiteness of $\sha(E/\mathbb{Q})[p^\infty]$ lies in understanding 
$\mathrm{red}_{v_{k,i}} \left(  p^{ \partial }\left( E(\mathbb{Q}) +p^kE(K[n_{k, i}]) \right) \right)$
for (finitely many) primes $v_{k,i}$ of $K[n_{k,i}]$ and every $1 \leq i \leq r^+_p$.
It is natural to ask the following question.
\begin{ques}
Is there an efficient algorithm to compute
$$\mathrm{red}_{v_{k,i}} \left(  p^{ \partial }\left( E(\mathbb{Q}) +p^kE(K[n_{k, i}]) \right) \right)$$
\emph{without} computing global generators of $p^{ \partial }\left( E(\mathbb{Q}) +p^kE(K[n_{k, i}]) \right)$?
\end{ques}
If the answer of the above question is affirmative, then the verification of the finiteness of $\sha(E/\mathbb{Q})[p^\infty]$ would reduce to finitely many steps of algorithmically easy computations.

\subsubsection{The local-global principle}
We recall the local-global principle on the linear dependence in Mordell--Weil groups for elliptic curves \cite{weston-kummer, gajda-gornisiewicz-linear-dependence, banaszak-krason-mordell-weil, jossen-linear-reduction}.
\begin{thm} \label{thm:local-global}
Let $E$ be an elliptic curve over $\mathbb{Q}$, $F$ be a number field, $P$ be a non-torsion point of $E(F)$, and $\Lambda$ be a subgroup of $E(F)$. 
The following statements are equivalent.
\begin{enumerate}
\item $P \in \Lambda$.
\item $\mathrm{red}_v(P) \in \mathrm{red}_v(\Lambda)$ for every prime $v$ of $F$.
\item $\mathrm{red}_v(P) \in \mathrm{red}_v(\Lambda)$ for all but finitely many primes $v$ of $F$.
\item $\mathrm{red}_v(P) \in \mathrm{red}_v(\Lambda)$ for every primes $v$ of a subset of the primes of $F$ of natural density 1.
\item $\mathrm{red}_v(P) \in \mathrm{red}_v(\Lambda)$ for every primes $v$ of a finite subset of the primes of $F$ depending on $E$, $P$, $\Lambda$, and a basis of $E(F)$.
\end{enumerate}
\end{thm}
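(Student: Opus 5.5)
The plan is to prove the cycle of implications $(1)\Rightarrow(2)\Rightarrow(3)\Rightarrow(4)\Rightarrow(1)$, and separately $(3)\Leftrightarrow(5)$. The implications $(1)\Rightarrow(2)$ and $(2)\Rightarrow(3)$ are trivial: reduction is a group homomorphism, so $P\in\Lambda$ gives $\mathrm{red}_v(P)\in\mathrm{red}_v(\Lambda)$, and every prime is in particular all but finitely many primes. The same reasoning gives $(3)\Rightarrow(4)$ once we know that the complement of a finite set of primes has natural density one. Also $(1)\Rightarrow(5)$ is immediate. So the real content is $(4)\Rightarrow(1)$ and $(5)\Rightarrow(1)$.

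\textbf{Reduction to a statement about $E(F)/\Lambda$.} Since $E(F)$ is finitely generated (Mordell--Weil), and the conditions are insensitive to replacing $\Lambda$ by itself, we study $Q=E(F)/\Lambda$. This is a finitely generated abelian group, and we must show that the class $\bar P$ of $P$ vanishes.

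\textbf{Torsion quotient to finite quotients.} First we show that $\bar P$ lies in a finite quotient. If $\bar P$ has infinite order, choose a prime $\ell$ and $m$ such that $\bar P\notin \ell^m Q+Q_{\mathrm{tors}}$. Otherwise $\bar P$ is torsion of order $t>1$; pick $\ell\mid t$ and work with $\ell$-power quotients. In either case it suffices to find a subgroup $\Lambda'\supseteq\Lambda$ of finite index, with $E(F)/\Lambda'$ an $\ell$-group of exponent $\ell^m$, such that $P\notin\Lambda'$, and then to produce a density-positive set of primes $v$ with $\mathrm{red}_v(P)\notin\mathrm{red}_v(\Lambda')\supseteq\mathrm{red}_v(\Lambda)$. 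Since $\Lambda'\supseteq \ell^m E(F)$, the question is reduced to $E(F)/\ell^mE(F)$.

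\textbf{Kummer theory and Chebotarev (the main obstacle).} Consider $F' = F(E[\ell^m], \tfrac{1}{\ell^m}E(F))$. By Bashmakov--Ribet type Kummer theory (for $E$ over $\mathbb{Q}$ with an $F$-rational structure), the Kummer pairing identifies $\mathrm{Gal}(F'/F(E[\ell^m]))$ with a subgroup of $\mathrm{Hom}(E(F)/\ell^m, E[\ell^m])$, and this subgroup is of bounded index, so that it is everything up to a bounded error. Choose $\sigma\in\mathrm{Gal}(F'/F)$ such that $\sigma$ acts on $E[\ell^m]$ with a prescribed image of $\sigma-1$ (for instance, the identity on $E[\ell^m]$, or a carefully chosen element), and such that the Kummer cocycle $\phi_\sigma$ kills $\Lambda'$ but not $P$. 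For Frobenius elements $v$ in the class of $\sigma$, the structure of $\widetilde E(\mathbb{F}_v)[\ell^\infty]$ together with $\phi_\sigma$ shows that $\mathrm{red}_v(P)\notin \mathrm{red}_v(\Lambda')+\ell^m\widetilde E(\mathbb{F}_v)$. By Chebotarev, such $v$ have positive density, which contradicts (4). The delicate point is the torsion case and the bounded-index (``Kummer defect'') issue, which is why the statement needs $P$ non-torsion and why the cited works of Gajda--Górnisiewicz, Banaszak--Krasoń, and Jossen enter; I would simply invoke their argument here.

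\textbf{$(5)\Rightarrow(1)$.} The argument above is effective: the extensions $F'$ involved are finitely many and determined by $E$, $P$, $\Lambda$, and a basis of $E(F)$. Hence for each of the finitely many relevant $(\ell,m,\Lambda')$ one may fix one Chebotarev witness $v$, and the union of these witnesses is the finite set in (5), following the explicit construction of Banaszak--Krasoń. A bound on the relevant $\ell$ and $m$ comes from the structure of $E(F)/\Lambda$, which is computable from the basis.
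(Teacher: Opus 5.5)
Your proposal follows the paper's proof. The implications $(1)\Rightarrow(2)\Rightarrow(3),(4),(5)$ are the trivial ones, and the converse directions come from invoking Gajda--G\'ornisiewicz/Weston, Jossen, and Banaszak--Kraso\'n, exactly as the paper does; you only route $(3)\Rightarrow(1)$ through the trivial $(3)\Rightarrow(4)$ and Jossen's density-one theorem.

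Your Kummer--Chebotarev paragraph is just a heuristic outline of what those references do. In particular, a Frobenius acting trivially on $E[\ell^m]$ need not suffice when $E(F)$ has $\ell$-power torsion or the Kummer image is not full. Since you explicitly defer that step to the cited works, as the paper does, this is fine.
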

\begin{proof}
The direction (1) $\Rightarrow$ (2)  $\Rightarrow$ [any of (3), (4), and (5)] is obvious.
The implication (3) $\Rightarrow$ (1) follows from \cite{gajda-gornisiewicz-linear-dependence}. See also \cite{weston-kummer}.
The implication (4) $\Rightarrow$ (1) follows from \cite{jossen-linear-reduction}.
The implication (5) $\Rightarrow$ (1) follows from \cite{banaszak-krason-mordell-weil}.
\end{proof}
In order to make the local criterion effective, we need to be able to compute $\mathrm{red}_v(\Lambda)$ \emph{without} computing a set of generators of $\Lambda$ or $E(F)$.

\section*{Acknowledgement}
This article has grown out as a side project from a number of discussions with Minhyong Kim for the last three years.
Our (still ongoing) discussion is originally motivated by investigating some refined aspects of Remark \ref{rem:tate-shafarevich-L}, which is not observed in the general theory of Euler systems.
Although he did not sign as a co-author of this article in the end, I regard it as a joint work with him.
I would like to thank Grzegorz Banaszak, Ashay Burungale, Francesc Castella, Henri Darmon, Keunyoung Jeong, Aditya Karnataki, Shinichi Kobayashi, Masato Kurihara, Kentaro Nakamura, Gyujin Oh, and Robert Pollack for their encouragement and helpful discussions.

\bibliographystyle{amsalpha}
\bibliography{library}

\end{document}